\documentclass[11pt,a4paper]{article}

\usepackage[T1]{fontenc}
\usepackage[utf8]{inputenc}
\usepackage{lmodern}
\usepackage[a4paper,margin=1in]{geometry}
\usepackage{microtype}

\usepackage{graphicx}
\usepackage{dsfont}
\usepackage{bm}
\usepackage{amsmath,amsfonts,amssymb,amsthm,mathtools}

\usepackage[round,authoryear]{natbib}
\usepackage[hypertexnames=false,hidelinks]{hyperref}

\theoremstyle{plain}
\newtheorem{theorem}{Theorem}
\newtheorem{proposition}{Proposition}

\theoremstyle{definition}
\newtheorem{defn}{Definition}

\theoremstyle{remark}
\newtheorem{myrem}{Remark}

\DeclareMathOperator{\vecc}{vec}
\DeclareMathOperator{\diag}{diag}

\DeclareMathOperator{\Cov}{Cov}

\newcommand{\bs}[1]{\boldsymbol{#1}}
\newcommand{\ba}{\boldsymbol{\alpha}}

\newcommand{\one}{{\bm 1}}

\newcommand{\Es}{\mathbb{E}}
\newcommand{\EE}{\mathbb{E}}
\newcommand{\VV}{\mathbb{V}}

\newcommand{\cov}{\Cov}

\newcommand{\EM}{{\mathbf E}_{\rm M}}

\newcommand{\SM}{{\Sigma}_{\EM}}
\newcommand{\PM}{{\bf P}_{\!{\bf E}_{\rm M}{\bf E}_{\rm M}}}
\newcommand{\PMh}{\widehat{{\bf P}}_{\!{\bf E}_{\rm M}{\bf E}_{\rm M}}}
\newcommand{\PUM}{{\bf P}_{\!{\bf U}_{\rm M}{\bf U}_{\rm M}}}
\newcommand{\PUDM}{{\bf P}_{\!{\bf U}_{\rm M}{\bf D}_{\rm M}}}
\newcommand{\PDUM}{{\bf P}_{\!{\bf D}_{\rm M}{\bf U}_{\rm M}}}
\newcommand{\PDM}{{\bf P}_{\!{\bf D}_{\rm M}{\bf D}_{\rm M}}}

\newcommand{\WPM}{\mathbf W_{\!\PM}}

\newcommand{\Qm}{{\bf Q}_{\rm M}}
\newcommand{\Nm}{{\bf N}_{\rm M}}
\newcommand{\R}{{\bf R}}

\begin{document}

\title{Reliability inference for semi-Markov models based on multiple trajectories}

\author{%
Samis Trevezas$^{1,2,*}$ \qquad
Mohamed Hamdaoui$^{3}$ \qquad
Irene Votsi$^{4}$\\[0.8em]
\small
$^{1}$Department of Mathematics, National and Kapodistrian University of Athens,\\
\small Athens 15784, Greece\\
\small
$^{2}$MICS Laboratory, CentraleSup\'elec--Universit\'e Paris-Saclay,\\
\small Gif-sur-Yvette F-91190, France\\
\small
$^{3}$LEM3, Universit\'e de Lorraine, UMR CNRS 7239,\\
\small 7 Rue F\'elix Savart, Metz F-57000, France\\
\small
$^{4}$LIEC, Universit\'e de Lorraine, UMR CNRS 7360,\\
\small 8 rue Delestraint, Metz F-57000, France\\[0.6em]
\small
$^{*}$Corresponding author: \texttt{strevezas@math.uoa.gr}\\
\small
Mohamed Hamdaoui: \texttt{mohamed.hamdaoui@univ-lorraine.fr}\\
\small
Irene Votsi: \texttt{eirini.votsi@univ-lorraine.fr}
}

\date{}

\maketitle

\begin{abstract}
We develop nonparametric inference for reliability indicators of
discrete-time semi-Markov systems from independent trajectories observed
over a common fixed horizon. Augmenting the physical state by the backward
recurrence time yields a finite coupled Markov representation on the observed
age range. We distinguish the resulting age-restricted failure-or-exit time
from calendar truncation, since these two finite-horizon quantities coincide
only in special cases. The framework covers restricted factorial moments and
moment characteristics, calendar-truncated failure-time summaries, and the
discrete-time intensity of the hitting time. Under explicit row-exposure
conditions, strong consistency and joint asymptotic normality are established
for the empirical initial law, the required transition rows and the
corresponding plug-in functionals. The Gaussian random-matrix representation
gives pointwise and joint covariance formulas, simultaneous confidence
envelopes, Wald procedures for linear summaries, and curvature-adjusted
Gaussian approximations. Restriction diagnostics and a target-specific
horizon-selection rule based on exposure, boundary interaction and
nested-horizon stability are developed separately. Numerical experiments
assess the inferential formulas and the diagnostics, while a complete-case
illustration from the European Group for Blood and Marrow Transplantation
reports calendar-truncated failure-time summaries and finite-dimensional
hitting intensities.
\end{abstract}

\medskip
\noindent\textbf{Keywords:}
Semi-Markov processes; reliability inference; finite-horizon inference;
multiple trajectories; matrix delta method.

\medskip
\noindent\textbf{MSC 2020:}
60K15; 60J10; 62G05; 62M05; 62N05.

\maketitle

\section{Introduction}
\label{S:1}

Semi-Markov models provide a natural framework for stochastic systems whose future evolution depends not only on the current state, but also on the time already spent in that state. Equivalently, the dynamics may be described through a Markov renewal chain and the associated process observed in calendar time. This representation allows non-geometric sojourn-time distributions and is therefore well suited to reliability, degradation modelling and related applications in which sojourn times carry essential information; see, for example, \citep{How71,Barb08,Chrys08,Barb16}.

In reliability applications, the state space is usually partitioned into operational and failed states. This leads to indicators such as availability, maintainability, failure rates, mean time to failure, variance of the time to failure and occurrence-rate-type quantities such as the discrete-time intensity of the hitting time. Nonparametric and plug-in estimation of such indicators has been studied in several semi-Markov settings; see, among others, \citep{Vas92,Sad02,Lim06,Barb16,Damic13,Damic15a,Mal14,Vots18,Vots15,Vots19,VB20}. Much of this literature is naturally formulated either for a fully specified semi-Markov law, for one long observed trajectory, or for reliability functionals that implicitly depend on the behaviour of the process over an unbounded future horizon.

\citet{Barb16} develop semi-Markov modelling for multi-state systems and the associated reliability characteristics, whereas \citet{VBouz25} study bootstrap procedures for semi-Markov reliability indicators. Their results concern, respectively, multi-state semi-Markov modelling and resampling-based reliability inference. The question treated here is which reliability functionals are identifiable when independent trajectories are observed up to a common fixed horizon, and how the corresponding plug-in estimators behave as the number of trajectories increases.

 We observe $L$ independent trajectories of the same semi-Markov system at the common times $0,1,\ldots,M$, where $M$ is fixed and $L$ tends to infinity. This multiple-trajectory structure is standard in survival and event-history analysis, where individual histories are used to estimate transition probabilities, occupation probabilities, survival probabilities and hitting-time distributions in multi-state models; see, for example, \citep{Putter2007,MeiraMachado2009}. Similar data structures arise in reliability studies, degradation monitoring, epidemiology and sensory analysis. For semi-Markov processes, the exact nonparametric likelihood theory of \citet{Trev11} provides an important basis for inference from one or several trajectories, while recent panel semi-Markov work, including \citet{Frascolla2022} and \citet{CardotFrascolla2024}, studies related multi-sample settings, mainly under parametric assumptions on the sojourn-time distributions. However, a nonparametric reliability theory for fixed-horizon multiple trajectories remains incomplete, especially for time-to-failure moments, moment characteristics, finite-dimensional intensity sequences, simultaneous confidence envelopes and horizon diagnostics.

The central difficulty is one of identifiability. By augmenting the semi-Markov state with the backward recurrence time, the process becomes a time-homogeneous Markov chain on an enlarged state space. Under fixed-horizon observation, only transition rows having positive exposure before time $M$ can be estimated, and only ages smaller than $M$ are represented. The unobserved tails of the sojourn-time distributions therefore remain unidentified. Consequently, unbounded-horizon quantities such as the full mean and variance of the time to failure are not nonparametric estimands unless additional tail assumptions are imposed.

This observation changes the target of inference. We first study the failure-or-exit time obtained by killing the coupled chain when it leaves the represented operational age set. Its moments, denoted by $\mathrm{MTTF}_M$ and $\mathrm{VTTF}_M$, are resolvent functionals of the finite coupled chain and are estimable under the stated positive-exposure condition for the transition rows on which the resolvent depends. We also distinguish the calendar-truncated variable $T_{\mathrm D}\wedge M$, which is bounded by the observation horizon and is a polynomial functional of the same finite restriction. The two variables agree in the alternating-renewal example considered below, but they need not agree when transitions between operational states reset the backward recurrence time. Neither quantity is presented as a universal substitute for its unbounded-horizon analogue.

Building on this finite-horizon perspective, the paper makes four contributions. First, we formulate the multiple-trajectory experiment as a finite Markov inference problem on the horizon-dependent coupled state space $E_M$, with inference restricted to transition rows having positive mean occupation before the horizon. We derive explicit expressions for the factorial moments, mean and variance of the age-restricted failure-or-exit time, and for the moments of the calendar-truncated failure time. This distinction is retained in the numerical applications.

Second, we develop joint nonparametric asymptotic inference for these restricted reliability indicators. With $M$ fixed and $L\to\infty$, we establish strong consistency and a joint Gaussian limit for the empirical restricted initial law and the required transition rows. Keeping the transition-matrix limit in Gaussian random-matrix form allows reliability estimators to be treated through finite-dimensional matrix asymptotic calculus, following the submitted manuscript of \citet{gavrilopoulos2026matrix}. This yields plug-in central limit theorems and covariance formulas for individual reliability functionals, including restricted factorial moments, restricted mean and variance of the time to failure, and related smooth moment characteristics.

Third, we derive finite-dimensional inference for the discrete-time intensity of the hitting time. For each $m\leq M$, the intensity is expressed as a matrix functional of the restricted coupled chain. Collecting these functionals over $m=1,\ldots,M$ gives a joint Gaussian limit for the whole finite intensity sequence. This directly supports pointwise confidence intervals, simultaneous Gaussian confidence envelopes and Wald-type procedures for linear summaries of the sequence. The same joint approach also gives inference for collections of restricted factorial moments and for derived shape summaries such as standard deviation, skewness and kurtosis.

Fourth, we introduce diagnostics that separate sampling uncertainty from finite-horizon restriction effects. A constructive non-identifiability result shows that the unbounded-horizon MTTF cannot be recovered from fixed-horizon trajectories without assumptions on the unobserved tail. Nested-horizon comparisons, maximum backward-recurrence times, near-boundary indicators and boundary-exit summaries are then used to assess interaction with the represented age boundary. When trajectories are available up to a common maximal horizon, these quantities lead to a target-specific analysis-horizon rule that combines row adequacy, a prespecified target-scale tolerance and paired nested-horizon uncertainty. These diagnostics do not estimate the unidentified continuation.

The paper is organized as follows. Section~\ref{S:2} introduces the semi-Markov framework, the coupled Markov representation and the fixed-horizon observation scheme. Section~\ref{S:restricted_indicators} defines and evaluates the restricted reliability indicators. Section~\ref{S:estimation} develops the empirical estimators, their asymptotic properties, the matrix-differential covariance formulas and the joint inference procedures. Section~\ref{S:diagnostics} presents the non-identifiability result and the diagnostics for finite-horizon restriction effects. Section~\ref{S:application} reports the simulation study, coverage assessments and the multiple-trajectory data example from the European Group for Blood and Marrow Transplantation (EBMT). Section~\ref{S:conclusion} concludes.

\section{Semi-Markov framework and notation}
\label{S:2}

This section fixes the semi-Markov framework used throughout the paper. We first recall the Markov renewal construction and the associated discrete-time semi-Markov chain. The reliability indicators considered later are functions not only of the current physical state, but also of the time elapsed since the last transition. For this reason, the backward recurrence time is introduced explicitly.

The augmentation by the backward recurrence time leads to a coupled process on $E\times\mathbb N_0$, which is Markov even when the physical state process alone is not. This representation is the basis for the finite-horizon restriction used in the sequel: under observations on $0,\ldots,M$, a transition row is estimable only when its state has positive mean occupation before time $M$. The notation below separates the full semi-Markov model from the finite-horizon coupled version used for inference.

\subsection{Semi-Markov kernel and associated process}

Throughout, $\mathbb{N}_0=\{0,1,2,\ldots\}$ and $\mathbb{N}=\{1,2,\ldots\}$. Vectors representing probability distributions are written as row vectors.
Let $E=\{1,\ldots,s\}$ be the finite state space of a discrete-time semi-Markov system. The Markov renewal construction is based on a state sequence $\mathbf J=(J_n)_{n\geq0}$, jump times $\mathbf S=(S_n)_{n\geq0}$ with $S_0=0$ and $0<S_1<S_2<\cdots$ almost surely, and sojourn times $X_n=S_n-S_{n-1}$ for $n\geq1$.

\begin{defn}
A matrix-valued sequence $q=(q_{ij}(k))$, $i,j\in E$, $k\in\mathbb{N}_0$, is a \emph{discrete-time semi-Markov kernel} if
\begin{equation*}
q_{ij}(k)\geq0,
\qquad
\sum_{j\in E}\sum_{k=0}^{\infty}q_{ij}(k)=1,
\qquad i\in E.
\end{equation*}
Throughout the paper we assume $q_{ij}(0)=0$ and $q_{ii}(k)=0$ for $k\geq1$.
\end{defn}

The homogeneous \emph{Markov renewal chain} is defined by
\begin{equation*}
\mathbb{P}\{J_{n+1}=j,S_{n+1}-S_n=k\mid J_{0:n},S_{0:n}\}
=q_{ij}(k),
\qquad J_n=i.
\end{equation*}
The associated \emph{semi-Markov chain} and the \emph{jump counting process} are given by
\begin{equation*}
Z_m=J_{N(m)},
\qquad
N(m)=\sup\{n\in\mathbb{N}_0:S_n\leq m\},
\qquad m\in\mathbb{N}_0.
\end{equation*}

For $i\in E$ and $k\geq0$, set
\begin{equation*}
\overline{H}_i(k)=\mathbb{P}(X_{n+1}>k\mid J_n=i).
\end{equation*}

The statistical results below concern the asymptotic setting in which $M$ is fixed and $L\to\infty$. The $L$ trajectories are assumed independent and identically distributed, completely observed at the times $0,1,\ldots,M$, and generated by a common homogeneous semi-Markov kernel and a common initial law. Every unknown transition row entering a stated functional is assumed to have positive mean occupation before the horizon. For the restricted moment functionals we also assume that the corresponding up-to-up matrix has spectral radius smaller than one. Irreducibility and positive recurrence are not required for the fixed-horizon independent-trajectory limit theorem.

\subsection{The coupled Markov chain}

The backward recurrence time is
\begin{equation*}
U_m=m-S_{N(m)},
\qquad m\in\mathbb{N}_0.
\end{equation*}
The process $\boldsymbol{(Z,U)}=((Z_m,U_m))_{m\geq0}$ is a time-homogeneous Markov chain; see, for instance, \citep{Chrys08}. Its initial distribution is denoted by $\ba$ and its transition matrix by ${\bf P}$. Since $S_0=0$, $\ba$ is supported by $\{(i,0):i\in E\}$.

For $u\geq0$ such that $\overline{H}_i(u)>0$, define
\begin{equation*}
p_{i,u;j}:=
\begin{cases}
\mathbb{P}(Z_{m+1}=j,U_{m+1}=0\mid Z_m=i,U_m=u), & j\neq i,\\[1mm]
\mathbb{P}(Z_{m+1}=i,U_{m+1}=u+1\mid Z_m=i,U_m=u), & j=i.
\end{cases}
\end{equation*}
Then
\begin{equation*}
p_{i,u;j}=
\begin{cases}
q_{ij}(u+1)/\overline{H}_i(u), & j\neq i,\\[1mm]
\overline{H}_i(u+1)/\overline{H}_i(u), & j=i.
\end{cases}
\end{equation*}
Consequently, for $k\geq1$ and $j\neq i$,
\begin{equation}
\label{eq:q_recovery_corrected}
q_{ij}(k)=\overline{H}_i(k-1)p_{i,k-1;j},
\end{equation}
whereas the survival function satisfies
\begin{equation*}
\overline{H}_i(k)=\overline{H}_i(k-1)p_{i,k-1;i},
\qquad
\overline{H}_i(0)=1.
\end{equation*}
Formula~\eqref{eq:q_recovery_corrected} is stated only for $j\neq i$, consistently with the convention $q_{ii}(k)=0$.

Let
\begin{equation*}
k_i=\sup\{k\geq1:q_{ij}(k)>0\textrm{ for some }j\in E\}.
\end{equation*}
The block corresponding to state $i$ is
\begin{equation*}
{\bf i}=\begin{cases}
\{(i,0),\ldots,(i,k_i-1)\}, & k_i<\infty,\\
\{i\}\times\mathbb{N}_0, & k_i=\infty.
\end{cases}
\end{equation*}
For a set $I\subset E$, write ${\bf I}=\cup_{i\in I}{\bf i}$. The coupled state space is ${\bf E}=\cup_{i\in E}{\bf i}$.

For the fixed observation horizon $M\geq1$, define the finite-horizon coupled state space
\begin{equation*}
\EM=\{(i,u):i\in E,\ u=0,1,\ldots,M-1\}\cap{\bf E}.
\end{equation*}
This is the finite part of the enlarged coupled state space induced by the observation horizon. The restriction of $\ba$ to $\EM$ is $\ba_{\EM}$ and the finite restricted transition matrix is
\begin{equation*}
\PM=\big(P(x,y)\big)_{x,y\in\EM}.
\end{equation*}
The row sum of $\PM$ is smaller than one only at a boundary row $(i,M-1)$ for which $\overline{H}_i(M)>0$. Thus $\PM$ is substochastic precisely when, for at least one state $i$ represented at age $M-1$, the sojourn-time support in state $i$ extends beyond $M$. In the statistical results this row affects inference only if it belongs to the fixed set of required rows and has positive mean occupation before the horizon.

More explicitly, if $x=(i,M-1)$, then a jump to a different physical state remains inside the finite restriction and satisfies
\begin{equation*}
P\bigl(x,(j,0)\bigr)=\frac{q_{ij}(M)}{\overline{H}_i(M-1)},
\qquad j\neq i,
\end{equation*}
whereas continuation of the same sojourn produces the transition to $(i,M)\notin\EM$ with probability
\begin{equation*}
p^{\mathrm{out}}_{x;M}
=
P\bigl(x,(i,M)\bigr)
=
\frac{\overline{H}_i(M)}{\overline{H}_i(M-1)}
=p_{i,M-1;i}.
\end{equation*}
Consequently,
\begin{equation*}
\sum_{y\in\EM}P(x,y)=1-p^{\mathrm{out}}_{x;M}.
\end{equation*}
Thus the missing row mass is the probability of entering the unrepresented continuation region. If $\bm p^{\mathrm{out}}_{M}$ collects these row deficits, the restriction may equivalently be completed by an absorbing exit state $\partial_M$ through the stochastic matrix
\begin{equation*}
\widetilde{\bf P}_{M}
=
\begin{pmatrix}
\PM & \bm p^{\mathrm{out}}_{M}\\
\bm 0^{\top} & 1
\end{pmatrix}.
\end{equation*}
The state $\partial_M$ records exit from $\EM$ and is absorbing. The block decomposition used below retains only transitions inside $\EM$; transitions to $\partial_M$ are represented by the substochastic row deficits.

\section{Evaluation of restricted reliability indicators}
\label{S:restricted_indicators}

Reliability indicators quantify the ability of a system to operate over time and provide summary measures for performance assessment, maintenance planning and risk evaluation. In semi-Markov models, such indicators combine the transition mechanism with the sojourn-time distributions and therefore account for non-geometric durations in operational and failed states. A functional of the coupled chain is nonparametrically identifiable from the fixed-horizon experiment only when it depends on the restricted initial law and on transition rows having positive mean occupation before the horizon. This section defines the age-restricted and calendar-truncated failure-time characteristics and the discrete-time intensity of the hitting time, and gives the finite matrix expressions used for their evaluation.

The state space $E$ is partitioned into operational states $\mathrm{U}$ and failure states $\mathrm{D}$. The corresponding subsets of the coupled state space are denoted by ${\bf U}$ and ${\bf D}$. At horizon $M$ we use
\begin{equation*}
{\bf U}_{\rm M}={\bf U}\cap\EM,
\qquad
{\bf D}_{\rm M}={\bf D}\cap\EM.
\end{equation*}
With the ordering $\EM={\bf U}_{\rm M}\cup{\bf D}_{\rm M}$, the finite matrix $\PM$ is written as
\begin{equation*}
\PM=
\begin{pmatrix}
\PUM & \PUDM\\
\PDUM & \PDM
\end{pmatrix}.
\end{equation*}
Here $\PUM=(\PM)_{{\bf U}_{\rm M}{\bf U}_{\rm M}}$, that is the restriction of $\PM$ to the rows and columns indexed by ${\bf U}_{\rm M}$. Similarly,
$\PUDM=(\PM)_{{\bf U}_{\rm M}{\bf D}_{\rm M}}$,
$\PDUM=(\PM)_{{\bf D}_{\rm M}{\bf U}_{\rm M}}$,
and
$\PDM=(\PM)_{{\bf D}_{\rm M}{\bf D}_{\rm M}}$
are obtained by retaining the rows and columns indexed by the corresponding subsets of $\EM$.

\subsection{Age-restricted failure-or-exit time}
Mean time to failure is a central reliability indicator because it summarizes, on the time scale of the system, the expected duration before the first entrance into a failure state. Nonparametric estimation of MTTF and related reliability indicators for semi-Markov processes was studied by \citet{Lim06}. In the discrete-time framework, \citet{VB20} developed confidence intervals for risk indicators, including the conditional MTTF, and illustrated the methodology with an application to wind energy production. We first define a failure-or-exit time on the represented operational age set. Inference for its moments requires positive mean occupation before time $M$ for every unknown transition row entering the corresponding resolvent.

The full time to failure of $Z$ is
\begin{equation*}
T_{\mathrm D}=\inf\{m\in\mathbb{N}_0:Z_m\notin\mathrm U\}.
\end{equation*}
Under fixed-horizon sampling, $T_{\mathrm D}$ is not identifiable in general. We therefore consider the stopping time obtained by leaving the restricted up set ${\bf U}_{\rm M}$. This is an age-restricted failure-or-exit time. It need not be bounded by the calendar horizon $M$, because a transition between distinct operational states resets the backward recurrence time.

\begin{defn}
\label{def:restricted_TTF}
The age-restricted failure-or-exit time is
\begin{equation*}
T_{\mathrm D,\rm M}=\inf\{m\in\mathbb{N}_0:(Z_m,U_m)\notin{\bf U}_{\rm M}\}.
\end{equation*}
The corresponding restricted mean and variance are
\begin{equation*}
\mathrm{MTTF}_{\rm M}=\Es(T_{\mathrm D,\rm M}),
\qquad
\mathrm{VTTF}_{\rm M}=\VV(T_{\mathrm D,\rm M}).
\end{equation*}
\end{defn}

The event $\{T_{\mathrm D,\rm M}>n\}$ means that the coupled chain remains in ${\bf U}_{\rm M}$ from time $0$ to time $n$. Define
\begin{equation*}
\Qm=\big(\PM\big)_{{\bf U}_{\rm M}{\bf U}_{\rm M}},
\qquad
\Nm=({\bf I}_{{\bf U}_{\rm M}}-\Qm)^{-1},
\end{equation*}
whenever the spectral radius $\rho(\Qm)<1$.

\begin{proposition}
\label{prop:restricted_pgf_moments}
Assume $\rho(\Qm)<1$. The probability generating function of $T_{\mathrm D,\rm M}$ is
\begin{equation}
\label{eq:pgf_restricted}
G_{\mathrm D,\rm M}(z)
=1+(z-1)\ba_{{\bf U}_{\rm M}}({\bf I}_{{\bf U}_{\rm M}}-z\Qm)^{-1}\one_{{\bf U}_{\rm M}},
\qquad |z|\leq1.
\end{equation}
For $k\geq1$, its $k$-th factorial moment is
\begin{equation}
\label{eq:MkTTF_M_population}
\mathrm{M_kTTF}_{\rm M}
=k!\,\ba_{{\bf U}_{\rm M}}\Qm^{k-1}\Nm^k\one_{{\bf U}_{\rm M}}.
\end{equation}
In particular,
\begin{equation}
\label{eq:MTTF_M_population}
\mathrm{MTTF}_{\rm M}
=\ba_{{\bf U}_{\rm M}}\Nm\one_{{\bf U}_{\rm M}},
\end{equation}
whereas
\begin{equation}
\label{eq:VTTF_M_population}
\mathrm{VTTF}_{\rm M}
=
\ba_{{\bf U}_{\rm M}}(2\Nm^2-\Nm)\one_{{\bf U}_{\rm M}}
-\big(\ba_{{\bf U}_{\rm M}}\Nm\one_{{\bf U}_{\rm M}}\big)^2.
\end{equation}
\end{proposition}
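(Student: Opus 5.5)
The plan is to go through the survival (tail) probabilities of $T_{\mathrm D,\rm M}$. Since the coupled chain $(Z,U)$ is time-homogeneous Markov with initial law $\ba$, the event $\{T_{\mathrm D,\rm M}>n\}$ means that the path stays in ${\bf U}_{\rm M}$ at times $0,\ldots,n$. The Markov property then gives
\begin{equation*}
\mathbb P(T_{\mathrm D,\rm M}>n)=\ba_{{\bf U}_{\rm M}}\Qm^{n}\one_{{\bf U}_{\rm M}},\qquad n\geq0 .
\end{equation*}
This holds because every one-step transition between two states of ${\bf U}_{\rm M}$ has probability given by the corresponding entry of $\PM$, hence of $\Qm$. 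Transitions to ${\bf D}_{\rm M}$ and exits to $\partial_M$ leave ${\bf U}_{\rm M}$, so they are accounted for by the row deficits of $\Qm$. The initial mass outside ${\bf U}_{\rm M}$ gives $T_{\mathrm D,\rm M}=0$, which is consistent with the formula at $n=0$. Because $\rho(\Qm)<1$, the tails decay geometrically. Hence $T_{\mathrm D,\rm M}<\infty$ almost surely and all moments are finite.

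For the pgf, write $p_n=\mathbb P(T=n)=t_{n-1}-t_n$ with $t_n=\mathbb P(T>n)$ and $t_{-1}=1$, and apply summation by parts:
\begin{equation*}
G(z)=\sum_n z^n(t_{n-1}-t_n)=1+(z-1)\sum_{n\geq0}z^n t_n .
\end{equation*}
For $|z|\leq1$, $\rho(z\Qm)<1$, so the Neumann series $\sum_n z^n\Qm^n=({\bf I}-z\Qm)^{-1}$ converges absolutely. This yields \eqref{eq:pgf_restricted}. The rearrangements are justified by absolute convergence, which follows from geometric tail decay at rate close to $\rho(\Qm)$.

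For the factorial moments, set $R(z)=({\bf I}-z\Qm)^{-1}$ and $h(z)=\ba_{{\bf U}_{\rm M}}R(z)\one_{{\bf U}_{\rm M}}$. The function $G$ is analytic on the disc $|z|<1/\rho(\Qm)$, which strictly contains $z=1$. So $\mathrm{M_kTTF}_{\rm M}=G^{(k)}(1)$, and Leibniz applied to $(z-1)h(z)$ gives $G^{(k)}(1)=k\,h^{(k-1)}(1)$. Differentiating the resolvent gives $R^{(j)}(z)=j!\,\Qm^{j}R(z)^{j+1}$, which is proved by induction using $R'=\Qm R^2$ and the fact that $\Qm$ commutes with $R$. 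Therefore $h^{(k-1)}(1)=(k-1)!\,\ba\Qm^{k-1}\Nm^{k}\one$, which gives \eqref{eq:MkTTF_M_population}. The case $k=1$ gives \eqref{eq:MTTF_M_population}.

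For the variance, use $\VV(T)=\mathrm{M_2}+\mathrm{M_1}-\mathrm{M_1}^2$ with $\mathrm{M_2}=2\ba\Qm\Nm^2\one$. The identity $\Qm\Nm=\Nm-{\bf I}$ gives $2\Qm\Nm^2=2\Nm^2-2\Nm$. Adding $\Nm$ yields $2\Nm^2-\Nm$, which is \eqref{eq:VTTF_M_population}.

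The main obstacle is the first step. One must check that $\Qm$ faithfully encodes one-step survival within ${\bf U}_{\rm M}$ even at boundary rows $(i,M-1)$, where mass exits to $(i,M)$. One must also check that the tail representation is valid for all $n$, including $n>M$, since $T_{\mathrm D,\rm M}$ is not bounded by $M$. This is exactly where homogeneity of $\mathbf P$ and the completion $\widetilde{\bf P}_M$ with the absorbing state $\partial_M$ are used. Everything after that is routine resolvent calculus.
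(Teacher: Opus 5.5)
Your proposal is correct and follows the same route as the paper's proof: the tail formula $\mathbb{P}(T_{\mathrm D,\rm M}>n)=\ba_{{\bf U}_{\rm M}}\Qm^n\one_{{\bf U}_{\rm M}}$, the identity $\mathbb{E}(z^T)=1+(z-1)\sum_{n\geq0}z^n\mathbb{P}(T>n)$, differentiation of the resolvent at $z=1$, and the variance obtained from $\Qm\Nm^2=\Nm^2-\Nm$. You only spell out details that the paper leaves implicit, namely the Leibniz reduction $G^{(k)}(1)=k\,h^{(k-1)}(1)$, the resolvent derivative $R^{(j)}(z)=j!\,\Qm^{j}R(z)^{j+1}$, and the analyticity of $G$ on a disc strictly containing $z=1$.
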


\begin{proof}
For $n\geq0$,
\begin{equation*}
\mathbb{P}(T_{\mathrm D,\rm M}>n)=\ba_{{\bf U}_{\rm M}}\Qm^n\one_{{\bf U}_{\rm M}}.
\end{equation*}
Using the standard identity for a non-negative integer-valued random variable,
\begin{equation*}
\mathbb{E}(z^T)=1+(z-1)\sum_{n=0}^{\infty}z^n\mathbb{P}(T>n),
\end{equation*}
we obtain \eqref{eq:pgf_restricted}. Since $\rho(\Qm)<1$, the inverse is finite-dimensional and analytic for $|z|\leq1$. Differentiating at $z=1$ gives \eqref{eq:MkTTF_M_population}. Formula~\eqref{eq:VTTF_M_population} follows from
\begin{equation*}
\VV(T)=\Es\{T(T-1)\}+\Es(T)-\Es(T)^2
\end{equation*}
and from $\Qm\Nm^2=\Nm^2-\Nm$.
\end{proof}

If every operational sojourn distribution that is reachable before failure has support contained in
\begin{equation*}
\{1,\ldots,M\},
\end{equation*}
then $T_{\mathrm D,\rm M}=T_{\mathrm D}$ almost surely. Otherwise, $T_{\mathrm D,\rm M}$ is the time to failure or exit from the represented operational age set.

\subsection{Calendar-truncated failure-time characteristics}
\label{subsec:calendar_truncation}

A second finite-horizon variable is obtained by truncating calendar time rather than the backward recurrence time.

\begin{defn}
\label{def:calendar_truncated_TTF}
The calendar-truncated time to failure is
\begin{equation*}
T_{\mathrm D}^{[M]}=T_{\mathrm D}\wedge M.
\end{equation*}
Its mean is denoted by
\begin{equation*}
\mathrm{MTTF}^{[M]}=\EE(T_{\mathrm D}^{[M]}).
\end{equation*}
\end{defn}

Unlike $T_{\mathrm D,\rm M}$, the variable $T_{\mathrm D}^{[M]}$ is bounded by $M$. The two variables coincide when, before first failure, no transition between distinct operational states can reset the backward recurrence time. They need not coincide in a model with several operational states. The following example shows explicitly how the two finite-horizon times differ.

\begin{myrem}
Take $E=\{1,2,3,4\}$, $\mathrm U=\{1,2,3\}$, $\mathrm D=\{4\}$ and $M=2$. Start from $(1,0)$ and let the semi-Markov kernel be deterministic, with
\begin{equation*}
q_{12}(2)=q_{23}(2)=q_{34}(2)=1.
\end{equation*}
Each transition between operational states resets the backward recurrence time. Hence the path remains in ${\bf U}_{\rm M}$ until the failure at calendar time $6$, so that
\begin{equation*}
T_{\mathrm D,\rm M}=T_{\mathrm D}=6,
\qquad
T_{\mathrm D}^{[2]}=2.
\end{equation*}
With $K$ successive operational states having deterministic sojourn length two, the corresponding difference is $2(K-1)$ and can therefore be arbitrarily large.

The two quantities answer different questions. The calendar-truncated variable is the natural target when $M$ is an administrative follow-up horizon and interest concerns failure within that calendar window. The age-restricted variable is the natural finite-state resolvent target when the restriction concerns the largest represented backward recurrence time in each operational state. For a non-repairable system, first-failure moments and the hitting-time mass function are primary summaries. For a repairable system, where several operational-to-failed transitions may occur, the DTIHT is preferable when the objective is the calendar-time occurrence probability of failures rather than the first-failure distribution alone.
\end{myrem}

\begin{proposition}
\label{prop:calendar_truncated_moments}
For $k\geq1$, let $(n)_0=1$ and $(n)_j=n(n-1)\cdots(n-j+1)$ for $j\geq1$. Then
\begin{equation}
\label{eq:calendar_truncated_factorial}
\EE\left\{(T_{\mathrm D}^{[M]})_k\right\}
=
k\sum_{n=0}^{M-1}(n)_{k-1}\,
\ba_{{\bf U}_{\rm M}}\Qm^n\one_{{\bf U}_{\rm M}}.
\end{equation}
In particular,
\begin{equation}
\label{eq:calendar_truncated_mean}
\mathrm{MTTF}^{[M]}
=
\ba_{{\bf U}_{\rm M}}
\left(\sum_{n=0}^{M-1}\Qm^n\right)
\one_{{\bf U}_{\rm M}}.
\end{equation}
\end{proposition}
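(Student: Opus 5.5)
The plan is to combine a discrete tail-sum representation of factorial moments with the observation that, before calendar time $M$, the events $\{T_{\mathrm D}>n\}$ and $\{T_{\mathrm D,\rm M}>n\}$ coincide. The restricted survival formula from the proof of Proposition~\ref{prop:restricted_pgf_moments} can then be reused. Only finitely many powers of $\Qm$ appear, so no spectral-radius assumption is needed.

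\emph{Step 1 (tail-sum identity).} For integers $n\ge0$ and $k\ge1$, the falling factorials satisfy $(n+1)_k-(n)_k=k\,(n)_{k-1}$. Both sides vanish when $n+1<k$, and otherwise this is the usual Pascal-type identity obtained by factoring out $(n)_{k-1}$. Telescoping from $n=0$, and using $(0)_k=0$, gives for every non-negative integer $T$
\begin{equation*}
(T)_k=\sum_{n=0}^{T-1}k\,(n)_{k-1}=k\sum_{n=0}^{\infty}(n)_{k-1}\,\mathds 1\{T>n\}.
\end{equation*}
Apply this to $T=T_{\mathrm D}^{[M]}$. Since $\{T_{\mathrm D}\wedge M>n\}$ is empty for $n\ge M$ and equals $\{T_{\mathrm D}>n\}$ for $n\le M-1$, the sum has only $M$ terms. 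Because the sum is finite, taking expectations termwise is immediate and gives
\begin{equation*}
\EE\{(T_{\mathrm D}^{[M]})_k\}=k\sum_{n=0}^{M-1}(n)_{k-1}\,\mathbb P(T_{\mathrm D}>n).
\end{equation*}

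\emph{Step 2 (identification of the survival probabilities).} This is the substantive step. We show that for $0\le n\le M-1$,
\begin{equation*}
\{T_{\mathrm D}>n\}=\{(Z_m,U_m)\in{\bf U}_{\rm M},\ m=0,\ldots,n\}.
\end{equation*}
Since $S_0=0$, the backward recurrence time satisfies $U_m\le m\le n\le M-1$. Moreover, $(Z_m,U_m)$ always lies in ${\bf E}$. Hence $Z_m\in\mathrm U$ for all $m\le n$ is equivalent to $(Z_m,U_m)\in{\bf U}\cap\EM={\bf U}_{\rm M}$ for all $m\le n$.

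The initial law $\ba$ is supported on age $0$, so it lives on $\EM$. By the Markov property of the coupled chain, the probability of remaining in ${\bf U}_{\rm M}$ up to time $n$ is the product of the restricted initial law and $n$ restricted transition steps. This gives
\begin{equation*}
\mathbb P(T_{\mathrm D}>n)=\ba_{{\bf U}_{\rm M}}\Qm^n\one_{{\bf U}_{\rm M}},\qquad 0\le n\le M-1.
\end{equation*}
The proof of this survival formula is the same path-sum argument as in Proposition~\ref{prop:restricted_pgf_moments}, now used only for finitely many $n$. The point to stress is that the bound $U_m\le m$ guarantees that no mass escapes through the boundary rows $(i,M-1)$ before time $M$. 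This is exactly why calendar truncation at $M$ is identifiable from $\Qm$ even though $T_{\mathrm D}$ itself is not.

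\emph{Step 3 (conclusion).} Substituting Step 2 into Step 1 yields \eqref{eq:calendar_truncated_factorial}. Taking $k=1$, so that $(n)_0=1$, gives \eqref{eq:calendar_truncated_mean} after moving the finite sum inside the bilinear form.
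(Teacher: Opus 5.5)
Your proposal is correct and follows the paper's proof. The paper likewise writes $(T\wedge M)_k=k\sum_{n=0}^{M-1}(n)_{k-1}\mathds{1}_{\{T>n\}}$ and identifies $\mathbb{P}(T_{\mathrm D}>n)$ with $\ba_{{\bf U}_{\rm M}}\Qm^n\one_{{\bf U}_{\rm M}}$ for $n\leq M-1$; you additionally spell out the telescoping proof of the falling-factorial identity and the bound $U_m\leq m$, which the paper leaves implicit and which makes staying in $\mathrm U$ before time $M$ equivalent to staying in ${\bf U}_{\rm M}$.
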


\begin{proof}
For a non-negative integer-valued variable $T$,
\begin{equation*}
(T\wedge M)_k
=
k\sum_{n=0}^{M-1}(n)_{k-1}\mathds{1}_{\{T>n\}}.
\end{equation*}
For $n\leq M-1$, absence of failure up to time $n$ is equivalent to remaining in ${\bf U}_{\rm M}$ up to that time, and hence
\begin{equation*}
\mathbb{P}(T_{\mathrm D}>n)
=
\ba_{{\bf U}_{\rm M}}\Qm^n\one_{{\bf U}_{\rm M}}.
\end{equation*}
Taking expectations gives \eqref{eq:calendar_truncated_factorial}; the case $k=1$ gives \eqref{eq:calendar_truncated_mean}.
\end{proof}

\subsection{Discrete-time intensity of the hitting time}
\label{subsec:DTIHT_definition}

The discrete-time intensity of the hitting time (DTIHT) is a reliability indicator that describes the distribution, over discrete time, of entries into the failed set. It is used to quantify the temporal contribution of failures in systems with non-geometric sojourn times and can be evaluated through explicit semi-Markov or hidden renewal-type formulas. The indicator has been studied in several related settings. \citet{Vots14} presented an explicit formula for the DTIHT in hidden Markov renewal chains. \citet{Vots15} developed the evaluation and estimation of the DTIHT for semi-Markov chains and established asymptotic properties of a plug-in type estimator for hidden Markov renewal chains. Later on, \citet{Vots19} extended this line of work to hidden semi-Markov chains and introduced conditional counterparts of the DTIHT, together with joint inference for the corresponding plug-in estimators.

In contrast with the time to failure, which focuses on the first entrance into the failed set, the DTIHT is particularly relevant for repairable systems. In such systems, the process may return from failed states to operational states, so that several up-to-down transitions may occur along the same trajectory. The DTIHT then describes the occurrence rate of failures over calendar time, rather than only the distribution of a single terminal failure time.

For $m\geq1$, define
\begin{equation*}
r(m)=\mathbb{P}(Z_{m-1}\in\mathrm U,Z_m\in\mathrm D).
\end{equation*}
When the failed set is absorbing, $r(m)=\mathbb{P}(T_{\mathrm D}=m)$ and the DTIHT coincides with the probability mass function of the first hitting time. When repair transitions from $\mathrm D$ to $\mathrm U$ are allowed, the same expression should be interpreted as the probability of a failure occurrence, namely a transition from an operational to a failed state, at calendar time $m$. This convention is consistent with the failure-occurrence-rate viewpoint, in which failures are represented through counting processes. All inference below concerns this identifiable discrete-time intensity of the hitting time.

This quantity is exactly identifiable from the restriction to $\EM$ whenever $m\leq M$. Let
\begin{equation*}
{\bf U}_m={\bf U}\cap\{(i,u):u=0,\ldots,m-1\},
\qquad
{\bf D}_m={\bf D}\cap\{(i,u):u=0,\ldots,m-1\}.
\end{equation*}
With selector matrices $\R_{\bf A}$ extracting coordinates in ${\bf A}\subset\EM$, the finite expression is
\begin{equation*}
r(m;\rm M)
=\ba_{\EM}\PM^{m-1}\R_{{\bf U}_m}^{\top}\big(\PM\big)_{{\bf U}_m{\bf D}_m}\one_{{\bf D}_m},
\qquad 1\leq m\leq M.
\end{equation*}
Since the transition from an up state to a down state resets the backward recurrence time to zero, the block ${\bf D}_m$ may be replaced by the down states with backward recurrence time zero without changing the value.

\section{Estimation and inference}
\label{S:estimation}

This section develops the estimation of the restricted reliability indicators introduced above. It first defines the empirical initial law, its finite-horizon restriction, and the empirical restricted transition matrix, and then derives the asymptotic properties and joint inference procedures for the corresponding plug-in reliability estimators.

\subsection{Finite-horizon coupled chain estimation}
\label{subsec:est_restricted_transition_matrix}

Assume that $L$ independent trajectories are observed at times $0,1,\ldots,M$. For trajectory $\ell$, write
\begin{equation*}
X_m^{(\ell)}=(Z_m^{(\ell)},U_m^{(\ell)}),
\qquad m=0,\ldots,M.
\end{equation*}
The initial law of the coupled chain is the probability measure
\begin{equation*}
\ba_x=\mathbb{P}(X_0=x),
\qquad x\in{\bf E},
\end{equation*}
where $x$ denotes a coupled state, not only a physical state of $E$.
Its empirical version, based on the $L$ independent initial observations, is
\begin{equation*}
\widehat\ba_x(L)
=\frac{1}{L}\sum_{\ell=1}^{L}\mathds{1}_{\{X_0^{(\ell)}=x\}},
\qquad x\in{\bf E}.
\end{equation*}
Since the inference is carried out on the finite-horizon coupled state space $\EM$, we use the coordinate restriction of this empirical law to $\EM$:
\begin{equation*}
\widehat\ba_{\EM}(L)
=\big(\widehat\ba_x(L)\big)_{x\in\EM},
\qquad
\ba_{\EM}=\big(\ba_x\big)_{x\in\EM}.
\end{equation*}
Equivalently, for $x\in\EM$,
\begin{equation*}
\big(\widehat\ba_{\EM}(L)\big)_x
=\widehat\ba_x(L)
=\frac{N_x^{(0)}(L)}{L},
\qquad
N_x^{(0)}(L)=\sum_{\ell=1}^{L}\mathds{1}_{\{X_0^{(\ell)}=x\}}.
\end{equation*}
This restriction is not a conditional renormalization; it is simply the vector of the coordinates of the empirical initial law indexed by $\EM$. Under the present semi-Markov construction, $S_0=0$, so the initial distribution is supported by states of the form $(i,0)$, all of which belong to $\EM$ for every $M\geq1$. Hence $\widehat\ba_{\EM}(L)$ and $\ba_{\EM}$ have total mass one.

For $x,y\in\EM$, define the following aggregate counting processes over the $L$ observed trajectories. They record, respectively, the empirical occupation count of state $x$ before the horizon and the number of observed transitions from $x$ to $y$:
\begin{align*}
N_x(L)
&=\sum_{\ell=1}^{L}\sum_{m=0}^{M-1}\mathds{1}_{\{X_m^{(\ell)}=x\}},\\
N_{xy}(L)
&=\sum_{\ell=1}^{L}\sum_{m=0}^{M-1}\mathds{1}_{\{X_m^{(\ell)}=x,\ X_{m+1}^{(\ell)}=y\}}.
\end{align*}
The empirical restricted transition matrix is
\begin{equation*}
\PMh(L)=\big(\widehat P_{xy}(L)\big)_{x,y\in\EM},
\qquad
\widehat P_{xy}(L)=\frac{N_{xy}(L)}{N_x(L)},
\end{equation*}
with the convention $0/0=0$. Therefore the finite-horizon estimators used below are
\begin{equation}
\label{eq:MLE_restricted}
\big(\widehat\ba_{\EM}(L),\PMh(L)\big),
\end{equation}
which estimate the restricted initial law and restricted transition matrix $(\ba_{\EM},\PM)$.

For $x\in\EM$, define the mean occupation time up to time $M-1$ by
\begin{equation*}
\pi_{x\mid\rm M}
=\Es\left(\sum_{m=0}^{M-1}\mathds{1}_{\{X_m=x\}}\right)
=\sum_{m=0}^{M-1}(\ba_{\EM}\PM^m)(x).
\end{equation*}

Let
\begin{equation*}
B_x=\sum_{m=0}^{M-1}\mathds{1}_{\{X_m=x\}},
\qquad
\eta_{x\mid M}=\mathbb{P}(B_x>0).
\end{equation*}
Since $B_x$ is non-negative and bounded by $M$, one has $\pi_{x\mid\rm M}=0$ if and only if $B_x=0$ almost surely, equivalently $\eta_{x\mid M}=0$. Thus a state with zero mean exposure cannot be visited at any departure time $0,\ldots,M-1$. It may nevertheless occur for the first time at the terminal observation $X_M$; in that case it is an estimable destination coordinate, but its outgoing transition row is not observed. The central limit theorem therefore applies to any fixed deterministic collection of rows with positive exposure. A row with zero population exposure is omitted from the transition parameter and from the Gaussian limit. If a target depends on such a row, that target is not identifiable under the stated experiment.

For practical diagnostics, in addition to $N_x(L)$ and $\widehat\pi_{x\mid\rm M}(L)=N_x(L)/L$, it is useful to report
\begin{equation*}
D_x(L)=\sum_{\ell=1}^{L}\mathds{1}_{\{B_x^{(\ell)}>0\}},
\end{equation*}
the number of distinct trajectories exposing row $x$. The pair $(N_x(L),D_x(L))$ distinguishes a row supported by many trajectories from a row whose exposure is concentrated in only a few paths. Required rows with $N_x(L)=0$ must be reported explicitly; they cannot be regularized or deleted without changing the target.

The estimators in \eqref{eq:MLE_restricted} are the coupled-chain form of the exact nonparametric likelihood estimators of \citet{Trev11}, restricted to the states and transitions observable before the fixed horizon. The following theorem gives the strong consistency of the empirical restricted initial law and of the empirical restricted transition matrix, and establishes their joint asymptotic normality in the fixed-horizon, multiple-trajectory asymptotic setting. The Gaussian limit is stated in matrix form, which is the form needed below because the reliability estimators are obtained by applying finite-dimensional matrix differentials. A detailed proof of the asymptotic properties of the nonparametric likelihood estimators can be found in \citet{Trev11}. We also give a proof adapted to this framework for completeness.

The set of rows required by a reliability functional is fixed at the population level. For notational simplicity, Theorem~\ref{thm:CLT_aPM_restricted} is stated for the whole matrix $\PM$ under positive exposure of every row. The same argument applies to any fixed deterministic collection of rows with positive mean occupation.

\vspace{0.5em}
\begin{theorem}
\label{thm:CLT_aPM_restricted}
Assume that $\pi_{x\mid\rm M}>0$ for every $x\in\EM$. Then, as $L\to\infty$,
\begin{equation*}
\widehat{\ba}_{\EM}(L)\longrightarrow\ba_{\EM},
\qquad
\PMh(L)\longrightarrow\PM,
\qquad\textrm{almost surely}.
\end{equation*}
Moreover,
\begin{equation*}
\sqrt{L}\Big(\widehat{\ba}_{\EM}(L)-\ba_{\EM},\ \PMh(L)-\PM\Big)
\xrightarrow[]{d}
\big(\mathbf W_{\!\ba_{\EM}},\WPM\big),
\end{equation*}
where $\mathbf W_{\!\ba_{\EM}}$ and $\WPM$ are independent centered Gaussian objects satisfying
\begin{equation}
\label{eq:Var_Wa_restricted}
\VV\left(\mathbf W_{\!\ba_{\EM}}^{\top}\right)
=\diag(\ba_{\EM})-\ba_{\EM}^{\top}\ba_{\EM},
\end{equation}
and
\begin{equation}
\label{eq:Sigma_PM_restricted}
\SM=\VV\left(\vecc(\WPM^{\top})\right)
=\diag\left\{\frac{1}{\pi_{x\mid\rm M}}\Lambda_x,\ x\in\EM\right\}.
\end{equation}
Here, if ${\bf P}_x$ is row $x$ of $\PM$, then
\begin{equation*}
\Lambda_x=\diag({\bf P}_x)-{\bf P}_x^{\top}{\bf P}_x.
\end{equation*}
\end{theorem}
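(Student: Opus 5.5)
The approach is to treat each trajectory as one i.i.d. draw of a bounded random vector and reduce everything to the multivariate strong law and CLT plus the delta method. For trajectory $\ell$ collect
\begin{equation*}
\xi^{(\ell)}=\Big(\big(\mathds{1}_{\{X_0^{(\ell)}=x\}}\big)_{x\in\EM},\ \big(B_x^{(\ell)}\big)_{x\in\EM},\ \big(B_{xy}^{(\ell)}\big)_{x,y\in\EM}\Big),
\qquad
B_{xy}^{(\ell)}=\sum_{m=0}^{M-1}\mathds{1}_{\{X_m^{(\ell)}=x,X_{m+1}^{(\ell)}=y\}}.
\end{equation*}
These vectors are i.i.d. with entries bounded by $M$, so all moments exist. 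Then $\widehat\ba_{\EM}(L)$, $N_x(L)/L$ and $N_{xy}(L)/L$ are coordinates of the sample mean $\bar\xi_L$. By the Markov property, $\Es B_x=\pi_{x\mid\rm M}$ and $\Es B_{xy}=\pi_{x\mid\rm M}P(x,y)$. The strong law gives $\bar\xi_L\to\Es\xi$ almost surely. Since $\pi_{x\mid\rm M}>0$, eventually $N_x(L)>0$, the $0/0$ convention becomes irrelevant, and $\widehat P_{xy}(L)=(N_{xy}/L)/(N_x/L)\to P(x,y)$ almost surely. This proves the consistency part.

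For the Gaussian limit, the vector CLT gives $\sqrt L(\bar\xi_L-\Es\xi)\Rightarrow$ Gaussian. I would then linearize the ratio:
\begin{equation*}
\sqrt L\big(\widehat P_{xy}-P(x,y)\big)=\frac{1}{N_x/L}\cdot\frac{1}{\sqrt L}\sum_{\ell=1}^{L}\Big(B_{xy}^{(\ell)}-P(x,y)B_x^{(\ell)}\Big).
\end{equation*}
By Slutsky it suffices to study the martingale-difference sums $\zeta_{xy}^{(\ell)}=\sum_{m<M}\mathds{1}_{\{X_m^{(\ell)}=x\}}\big(\mathds{1}_{\{X_{m+1}^{(\ell)}=y\}}-P(x,y)\big)$ jointly with the initial indicators. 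The whole statement then reduces to computing covariances of these per-trajectory quantities.

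The key computation, which I expect to be the main obstacle, is the following. Write $d_m=\mathds{1}_{\{X_{m+1}=y\}}-P(x,y)$ and $\mathcal F_m=\sigma(X_0,\ldots,X_m)$. The terms $\mathds{1}_{\{X_m=x\}}d_m$ are martingale differences with respect to $\mathcal F_{m+1}$. Hence, for $m<m'$ (any rows $x,x'$), the cross terms vanish after conditioning on $\mathcal F_{m'}$. Same-time terms with $x\neq x'$ vanish because the indicators are disjoint. Same-time, same-row terms give $\Es[\mathds{1}_{\{X_m=x\}}]\big(P(x,y)\delta_{yy'}-P(x,y)P(x,y')\big)$. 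Summing over $m$ yields
\begin{equation*}
\Cov\big(\zeta_{xy},\zeta_{x'y'}\big)=\delta_{xx'}\,\pi_{x\mid\rm M}(\Lambda_x)_{yy'}.
\end{equation*}
Dividing by $\pi_{x\mid\rm M}^2$ gives the block-diagonal $\SM$ in \eqref{eq:Sigma_PM_restricted}, with blocks $\Lambda_x/\pi_{x\mid\rm M}$ ordered as $\vecc(\WPM^\top)$, i.e.\ row by row.

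Independence follows because $\mathds{1}_{\{X_0=z\}}$ is $\mathcal F_0$-measurable: $\Es[\mathds{1}_{\{X_0=z\}}\mathds{1}_{\{X_m=x\}}d_m]=0$ by conditioning on $\mathcal F_m$. Jointly Gaussian limits with zero cross-covariance are independent. The multinomial covariance \eqref{eq:Var_Wa_restricted} is immediate from the indicators of a single categorical variable.

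Two points need a brief remark. First, boundary rows with deficit $p^{\mathrm{out}}_{x;M}>0$ must be handled by letting $y$ range over $\EM\cup\{\partial_M\}$ in the computation and then dropping the $\partial_M$ column. Since $\Lambda_x$ is defined with the substochastic row ${\bf P}_x$, the displayed formula is exactly the relevant sub-block. Second, the same argument applies verbatim to any fixed collection of positively exposed rows.
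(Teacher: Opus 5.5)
Your proof is correct and follows essentially the same route as the paper: i.i.d. bounded per-trajectory count vectors, the strong law and multivariate CLT, and linearization of the ratio around the martingale-difference score $\sum_{m<M}\mathds{1}_{\{X_m=x\}}(\cdot)$. Orthogonality of this score across times and rows gives the block-diagonal $\SM$, and its zero covariance with the $\mathcal F_0$-measurable initial indicators, together with joint Gaussianity, gives independence. Your exact identity $\sqrt L\,(\widehat P_{xy}-P(x,y))=(L/N_x)\,L^{-1/2}\sum_\ell\bigl(B^{(\ell)}_{xy}-P(x,y)B^{(\ell)}_x\bigr)$, valid once $N_x>0$ and followed by Slutsky, is a slightly cleaner version of the paper's $o_p(L^{-1/2})$ ratio linearization.
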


\begin{proof}
We give the argument because the row covariance in \eqref{eq:Sigma_PM_restricted} is the object on which all subsequent matrix differentials act. For one trajectory, put
\begin{equation*}
A_{xy}=\sum_{m=0}^{M-1}\mathds{1}_{\{X_m=x,X_{m+1}=y\}},
\qquad
B_x=\sum_{m=0}^{M-1}\mathds{1}_{\{X_m=x\}},
\end{equation*}
and let ${\bf A}_x=(A_{xy})_{y\in\EM}^{\top}$. Then
\begin{equation*}
\EE(B_x)=\pi_{x\mid\rm M},
\qquad
\EE({\bf A}_x)=\pi_{x\mid\rm M}{\bf P}_x^{\top}.
\end{equation*}
Since $M$ is fixed, these quantities form finite-dimensional random vectors for one observed trajectory; the corresponding sample counts are obtained by summing independent copies over the $L$ trajectories. The strong law applied to the $L$ independent copies gives
\begin{equation*}
L^{-1}N_x(L)\longrightarrow\pi_{x\mid\rm M},
\qquad
L^{-1}(N_{xy}(L))_{y\in\EM}\longrightarrow
\pi_{x\mid\rm M}{\bf P}_x^{\top}
\end{equation*}
almost surely. Hence, for every row with $\pi_{x\mid\rm M}>0$, the denominator is almost surely eventually positive and the ratio estimator is strongly consistent.

Let ${\bf e}_{X_{m+1}}^{\EM}$ denote the coordinate vector of $X_{m+1}$ when $X_{m+1}\in\EM$, and the null vector otherwise. For a fixed row $x$, define
\begin{equation*}
\bm S_x
={\bf A}_x-B_x{\bf P}_x^{\top}
=\sum_{m=0}^{M-1}\mathds{1}_{\{X_m=x\}}
\bigl({\bf e}_{X_{m+1}}^{\EM}-{\bf P}_x^{\top}\bigr).
\end{equation*}
The summands are martingale differences with respect to the natural filtration. Thus terms taken at different times are orthogonal. For two different rows $x\neq z$, the products at the same time vanish, and products at different times are again orthogonal by the martingale-difference property. Therefore
\begin{equation*}
\VV(\bm S_x)=\pi_{x\mid\rm M}\{\diag({\bf P}_x)-{\bf P}_x^{\top}{\bf P}_x\},
\qquad
\cov(\bm S_x,\bm S_z)=0,
\quad x\neq z.
\end{equation*}
The same formula remains valid when ${\bf P}_x$ is substochastic. This can occur only for a boundary state $x=(i,M-1)$ with positive continuation probability $\overline{H}_i(M)/\overline{H}_i(M-1)$; the transition outside $\EM$ is then an additional multinomial category which is not retained in the vector.

For the $L$ independent trajectories, the multivariate central limit theorem applied to the vector collecting $\mathds{1}_{\{X_0=x\}}$, $B_x$ and ${\bf A}_x$ for $x\in\EM$ gives a joint Gaussian limit. The ratio map is differentiable for every row $x$ such that
$\pi_{x\mid\rm M}>0$, that is, for every state with positive mean occupation
time before the horizon. Equivalently,
\begin{align*}
\widehat{\bf P}_x(L)^{\top}-{\bf P}_x^{\top}
&=\frac{L^{-1}\sum_{\ell=1}^L{\bf A}_x^{(\ell)}}{L^{-1}\sum_{\ell=1}^LB_x^{(\ell)}}
-{\bf P}_x^{\top} \\
&=\frac{1}{\pi_{x\mid\rm M}}\frac{1}{L}\sum_{\ell=1}^L
\bigl({\bf A}_x^{(\ell)}-B_x^{(\ell)}{\bf P}_x^{\top}\bigr)
+o_p(L^{-1/2}) \\
&=\frac{1}{\pi_{x\mid\rm M}}\frac{1}{L}\sum_{\ell=1}^L
\bm S_x^{(\ell)}+o_p(L^{-1/2}).
\end{align*}
Multiplying by $\sqrt L$ and using the covariance displayed above gives the row block $\pi_{x\mid\rm M}^{-1}\Lambda_x$ in \eqref{eq:Sigma_PM_restricted}; the off-diagonal row blocks are zero. The empirical restricted initial law is a multinomial average and has covariance \eqref{eq:Var_Wa_restricted}. Its covariance with the transition score is zero because
\begin{equation*}
\EE(\bm S_x\mid X_0)=0,
\qquad x\in\EM.
\end{equation*}
Since the joint limit is Gaussian, zero cross-covariance gives independence between $\mathbf W_{\!\ba_{\EM}}$ and $\WPM$.
\end{proof}

\subsection{Efficiency in the fixed-horizon likelihood}
\label{subsec:fixed_horizon_efficiency}
For fixed $M$, a fixed set of positively exposed rows and a fixed support pattern, the observed-data model is finite-dimensional. Each substochastic boundary row is completed by the exit category described above, so the parameter consists of the initial distribution and a product of multinomial row parameters. The estimators in \eqref{eq:MLE_restricted} are the maximum likelihood estimators in this model.

Assume that the true parameter lies in the relative interior of this fixed-support model. For one trajectory, retain the notation $B_x$ above and write ${\bf A}_x=(A_{xy})_{y\in\EM}^{\top}$. Let ${\bf H}(X_{0:M})$ be the matrix whose required row $x$ is
\begin{equation*}
{\bf H}_x(X_{0:M})
=
\pi_{x\mid\rm M}^{-1}\bigl({\bf A}_x-B_x{\bf P}_x^{\top}\bigr)^{\top},
\end{equation*}
and whose non-required rows are zero. For a differentiable scalar functional $\Phi(\ba_{\EM},\PM)$, define
\begin{equation*}
\operatorname{IF}_{\Phi}(X_{0:M})
=
D\Phi_{(\ba_{\EM},\PM)}
\left[
{\bf e}_{X_0}^{\top}-\ba_{\EM},
{\bf H}(X_{0:M})
\right].
\end{equation*}
This centered random variable is the first-order contribution of one complete trajectory. More precisely,
\begin{align*}
&\sqrt L\left\{
\Phi\bigl(\widehat\ba_{\EM}(L),\PMh(L)\bigr)
-
\Phi(\ba_{\EM},\PM)
\right\}
\\
&\qquad=
\frac{1}{\sqrt L}\sum_{\ell=1}^{L}
\operatorname{IF}_{\Phi}\bigl(X_{0:M}^{(\ell)}\bigr)
+o_p(1).
\end{align*}
Its variance is therefore the asymptotic variance of the plug-in estimator. The covariance in Theorem~\ref{thm:CLT_aPM_restricted} is the inverse Fisher-information covariance on the row-sum tangent spaces, including the complementary exit category at a substochastic boundary row. Consequently, the delta-method variance of the plug-in MLE is the information bound for every regular differentiable fixed-horizon functional. In this finite-dimensional sense, the proposed plug-in estimators are asymptotically efficient.

This efficiency statement concerns the regular finite-dimensional model induced by a fixed observation horizon. The paper does not consider semiparametric efficiency for the unrestricted semi-Markov kernel. Tail coordinates beyond the observation horizon are not identified by the fixed-horizon likelihood, whereas the targets studied here depend only on the identified finite restriction. The relevant comparison for the present estimators is therefore the Fisher-information bound in the fixed-horizon model. If zero transition probabilities are fixed in advance as structural zeros, the same argument applies on the corresponding lower-dimensional simplex, provided the remaining probabilities are positive. If the zero pattern is not fixed, or if the support is selected from the data, the parameter is on the boundary and the regular information-bound argument above does not apply. These cases are not considered here.

\subsection{Inference for individual reliability indicators}
\label{sec:asymp_indicators}

Following the matrix-differential approach of the submitted manuscript by \citet{gavrilopoulos2026matrix}, the proofs use the finite-dimensional delta method in matrix form. If ${\bf A}\subset\EM$, $\R_{\bf A}$ denotes the selector matrix such that $v_{\bf A}=\R_{\bf A}v$ and $B_{{\bf A}{\bf C}}=\R_{\bf A}B\R_{\bf C}^{\top}$. For row-wise vectorization,
\begin{equation*}
\vecc\{(B_{{\bf A}{\bf C}})^{\top}\}
=(\R_{\bf A}\otimes\R_{\bf C})\vecc(B^{\top}).
\end{equation*}
Set
\begin{equation*}
\Gamma_{\ba,\rm M}=\diag(\ba_{\EM})-\ba_{\EM}^{\top}\ba_{\EM},
\qquad
\SM=\VV\{\vecc(\WPM^{\top})\}.
\end{equation*}
For ${\bf A},{\bf C}\subset\EM$, write
\begin{equation*}
\Gamma_{\ba,\rm M}({\bf A})=\R_{\bf A}\Gamma_{\ba,\rm M}\R_{\bf A}^{\top},
\qquad
\SM({\bf A},{\bf C})=(\R_{\bf A}\otimes\R_{\bf C})\SM(\R_{\bf A}\otimes\R_{\bf C})^{\top}.
\end{equation*}

\subsubsection{Calendar-truncated mean}
\label{subsec:calendar_truncated_estimation}

The calendar-truncated mean in \eqref{eq:calendar_truncated_mean} has the matrix plug-in estimator
\begin{equation*}
\widehat{\mathrm{MTTF}}^{[M]}_{\rm mat}(L)
=
(\widehat\ba_{\EM}(L))_{{\bf U}_{\rm M}}
\left\{\sum_{n=0}^{M-1}\widehat{\bf Q}_{\rm M}(L)^n\right\}
\one_{{\bf U}_{\rm M}}.
\end{equation*}
It is a polynomial functional, so its first-order limit follows from Theorem~\ref{thm:CLT_aPM_restricted} by differentiating the finite sum of powers. The same target is estimated directly from the observed trajectories by
\begin{equation}
\label{eq:calendar_truncated_empirical}
\widehat{\mathrm{MTTF}}^{[M]}_{\rm emp}(L)
=
\frac{1}{L}\sum_{\ell=1}^{L}\left(T_{\mathrm D}^{(\ell)}\wedge M\right).
\end{equation}
Since the summands are independent and bounded,
\begin{equation*}
\sqrt{L}\left\{\widehat{\mathrm{MTTF}}^{[M]}_{\rm emp}(L)-\mathrm{MTTF}^{[M]}\right\}
\xrightarrow[]{d}
\mathcal{N}\left(0,\VV(T_{\mathrm D}^{[M]})\right).
\end{equation*}
Under the model and row-exposure conditions of Theorem~\ref{thm:CLT_aPM_restricted}, the matrix and direct empirical estimators have the same probability limit, but they are not generally identical in finite samples. The EBMT illustration uses \eqref{eq:calendar_truncated_empirical}.

\subsubsection{Restricted factorial moments}
\label{subsec:restricted_factorial_clt}

Let $\Qm=\big(\PM\big)_{{\bf U}_{\rm M}{\bf U}_{\rm M}}$ and $\Nm=({\bf I}_{{\bf U}_{\rm M}}-\Qm)^{-1}$. Define
\begin{equation*}
\widehat{\bf Q}_{\rm M}(L)=(\PMh(L))_{{\bf U}_{\rm M}{\bf U}_{\rm M}}.
\end{equation*}
On the event $\mathcal A_L=\{\rho(\widehat{\bf Q}_{\rm M}(L))<1\}$, set
\begin{equation*}
\widehat{\bf N}_{\rm M}(L)=({\bf I}_{{\bf U}_{\rm M}}-\widehat{\bf Q}_{\rm M}(L))^{-1}.
\end{equation*}
If $\rho(\Qm)<1$, consistency and continuity of the spectral radius imply that $\mathcal A_L$ occurs almost surely for all sufficiently large $L$. The plug-in estimators below may be assigned an arbitrary value on $\mathcal A_L^c$; this convention has no asymptotic effect. The plug-in estimator of \eqref{eq:MkTTF_M_population} is
\begin{equation*}
\widehat{\mathrm{M_kTTF}}_{\rm M}(L)
=k!\,(\widehat{\ba}_{\EM}(L))_{{\bf U}_{\rm M}}
\widehat{\bf Q}_{\rm M}(L)^{k-1}
\widehat{\bf N}_{\rm M}(L)^k
\one_{{\bf U}_{\rm M}}.
\end{equation*}

Define
\begin{equation*}
\bs\beta_{\rm M}^{(k)}=\Qm^{k-1}\Nm^k\one_{{\bf U}_{\rm M}}.
\end{equation*}
For a perturbation ${\bf H}$ of $\Qm$,
\begin{align*}
D\bs\beta_{\rm M}^{(k)}[{\bf H}]
&=
\sum_{j=0}^{k-2}\Qm^j{\bf H}\Qm^{k-2-j}\Nm^k\one_{{\bf U}_{\rm M}}
\\
&\quad+
\Qm^{k-1}\sum_{j=0}^{k-1}\Nm^{j+1}{\bf H}\Nm^{k-j}\one_{{\bf U}_{\rm M}},
\end{align*}
where the first sum is empty for $k=1$.

\begin{theorem}
\label{thm:CLT_MkTTF_restricted}
Assume the conditions of Theorem~\ref{thm:CLT_aPM_restricted} and $\rho(\Qm)<1$. Then, for every $k\geq1$,
\begin{equation*}
\sqrt{L}\left(\widehat{\mathrm{M_kTTF}}_{\rm M}(L)-\mathrm{M_kTTF}_{\rm M}\right)
\xrightarrow[]{d}
\mathcal{Z}^{(k)}_{\rm M},
\qquad
\mathcal{Z}^{(k)}_{\rm M}\sim\mathcal{N}(0,\sigma^2_{k,\rm M}),
\end{equation*}
where
\begin{equation}
\label{eq:Zk_operator_form_restricted}
\mathcal{Z}^{(k)}_{\rm M}
=k!\left\{(\mathbf W_{\!\ba_{\EM}})_{{\bf U}_{\rm M}}\bs\beta_{\rm M}^{(k)}
+\ba_{{\bf U}_{\rm M}}D\bs\beta_{\rm M}^{(k)}[\big(\WPM\big)_{{\bf U}_{\rm M}{\bf U}_{\rm M}}]\right\}.
\end{equation}
Moreover,
\begin{equation*}
\sigma^2_{k,\rm M}
=(k!)^2\left\{
(\bs\beta_{\rm M}^{(k)})^{\top}\Gamma_{\ba,\rm M}({\bf U}_{\rm M})\bs\beta_{\rm M}^{(k)}
+(\bs d_{\rm M}^{(k)})^{\top}\SM({\bf U}_{\rm M},{\bf U}_{\rm M})\bs d_{\rm M}^{(k)}
\right\},
\end{equation*}
with
\begin{align}
\label{eq:dk_explicit_restricted}
\bs d_{\rm M}^{(k)}
&=
\sum_{j=0}^{k-2}
\left\{(\Qm^j)^{\top}\ba_{{\bf U}_{\rm M}}^{\top}\right\}
\otimes
\left\{\Qm^{k-2-j}\Nm^k\one_{{\bf U}_{\rm M}}\right\}
\nonumber\\
&\quad+
\sum_{j=0}^{k-1}
\left\{(\Nm^{j+1})^{\top}(\Qm^{k-1})^{\top}\ba_{{\bf U}_{\rm M}}^{\top}\right\}
\otimes
\left\{\Nm^{k-j}\one_{{\bf U}_{\rm M}}\right\}.
\end{align}
\end{theorem}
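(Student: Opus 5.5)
The proof is a delta method applied to the joint Gaussian limit of Theorem~\ref{thm:CLT_aPM_restricted}. Write the target as $\Phi_k(\ba_{\EM},\PM)=k!\,\ba_{{\bf U}_{\rm M}}\bs\beta^{(k)}_{\rm M}(\Qm)$, where $\bs\beta^{(k)}_{\rm M}({\bf Q})={\bf Q}^{k-1}({\bf I}-{\bf Q})^{-k}\one_{{\bf U}_{\rm M}}$ and $\Qm=\R_{{\bf U}_{\rm M}}\PM\R_{{\bf U}_{\rm M}}^{\top}$. The map ${\bf Q}\mapsto({\bf I}-{\bf Q})^{-1}$ is analytic on the open set $\{\rho({\bf Q})<1\}$. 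Products and linear selections are smooth, so $\Phi_k$ is continuously differentiable in a neighbourhood of the true parameter.

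Since $\mathcal A_L$ holds almost surely for large $L$, the arbitrary value assigned on $\mathcal A_L^c$ is asymptotically negligible: $\sqrt L$ times the difference is $o_p(1)$. The matrix delta method then gives
\[
\sqrt L(\widehat{\mathrm{M_kTTF}}_{\rm M}-\mathrm{M_kTTF}_{\rm M})\xrightarrow{d} D\Phi_k[\mathbf W_{\!\ba_{\EM}},\WPM].
\]

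Next I compute the differential. Differentiating in $\ba$ gives $k!\,(\mathbf W_{\!\ba_{\EM}})_{{\bf U}_{\rm M}}\bs\beta^{(k)}_{\rm M}$. For the direction ${\bf H}=(\WPM)_{{\bf U}_{\rm M}{\bf U}_{\rm M}}$ I use two product-rule identities:
\[
D({\bf Q}^{k-1})[{\bf H}]=\sum_{j=0}^{k-2}{\bf Q}^j{\bf H}{\bf Q}^{k-2-j},
\qquad
D\Nm[{\bf H}]=\Nm{\bf H}\Nm .
\]
The second gives $D(\Nm^k)[{\bf H}]=\sum_{j=0}^{k-1}\Nm^{j}\Nm{\bf H}\Nm\Nm^{k-1-j}$. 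Combined by the product rule, these reproduce exactly the displayed $D\bs\beta^{(k)}_{\rm M}[{\bf H}]$ and hence \eqref{eq:Zk_operator_form_restricted}. The limit is a linear map of jointly Gaussian objects, so it is centered Gaussian.

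For the variance, independence of $\mathbf W_{\!\ba_{\EM}}$ and $\WPM$ makes the two terms uncorrelated. The first term has variance $(\bs\beta^{(k)}_{\rm M})^{\top}\Gamma_{\ba,\rm M}({\bf U}_{\rm M})\bs\beta^{(k)}_{\rm M}$ by \eqref{eq:Var_Wa_restricted}. For the second term I vectorize each scalar summand with the row-wise identity
\[
{\bf a}{\bf H}{\bf b}=({\bf a}\otimes{\bf b}^{\top})\vecc({\bf H}^{\top}),
\]
valid for a row vector ${\bf a}$ and a column vector ${\bf b}$. Taking ${\bf a}=\ba_{{\bf U}_{\rm M}}\Qm^j$ and ${\bf b}=\Qm^{k-2-j}\Nm^k\one$ in the first sum, and ${\bf a}=\ba_{{\bf U}_{\rm M}}\Qm^{k-1}\Nm^{j+1}$ and ${\bf b}=\Nm^{k-j}\one$ in the second, yields $\ba_{{\bf U}_{\rm M}}D\bs\beta^{(k)}_{\rm M}[{\bf H}]=(\bs d^{(k)}_{\rm M})^{\top}\vecc({\bf H}^{\top})$, with $\bs d^{(k)}_{\rm M}$ as in \eqref{eq:dk_explicit_restricted}; the transposes there come from writing each row vector as a column. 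Finally, $\vecc({\bf H}^{\top})=(\R_{{\bf U}_{\rm M}}\otimes\R_{{\bf U}_{\rm M}})\vecc(\WPM^{\top})$ has covariance $\SM({\bf U}_{\rm M},{\bf U}_{\rm M})$, which gives $\sigma^2_{k,\rm M}$.

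The main obstacle is the bookkeeping in this vectorization step. The ordering of the Kronecker factors has to match the row-wise convention $\vecc(B^{\top})$ used in \eqref{eq:Sigma_PM_restricted}, and the transposes on $\Qm$ and $\Nm$ have to be placed correctly. I would check the convention on the $k=1$ case, where $\bs d^{(1)}_{\rm M}=(\Nm^{\top}\ba^{\top}_{{\bf U}_{\rm M}})\otimes(\Nm\one)$.
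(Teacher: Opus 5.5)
Your proposal is correct and follows the paper's proof essentially step for step. Both use the delta method on the joint Gaussian limit of Theorem~\ref{thm:CLT_aPM_restricted}, the product rule with $D\Nm[{\bf H}]=\Nm{\bf H}\Nm$ and the matrix-power derivative, independence of the two Gaussian components for the variance split, and the row-wise identity $u^{\top}Hv=\vecc(H^{\top})^{\top}(u\otimes v)$ (your ${\bf a}{\bf H}{\bf b}=({\bf a}\otimes{\bf b}^{\top})\vecc({\bf H}^{\top})$ is the same identity) to obtain $\bs d^{(k)}_{\rm M}$; your remarks on the event $\mathcal A_L^c$ and on $\vecc({\bf H}^{\top})=(\R_{{\bf U}_{\rm M}}\otimes\R_{{\bf U}_{\rm M}})\vecc(\WPM^{\top})$ just make explicit details the paper leaves implicit.
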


\begin{proof}
The map $\Qm\mapsto\Qm^{k-1}({\bf I}-\Qm)^{-k}$ is differentiable on the open set $\{Q:\rho(Q)<1\}$. The derivative follows from the product rule, from
\begin{equation*}
D({\bf I}-Q)^{-1}[H]=({\bf I}-Q)^{-1}H({\bf I}-Q)^{-1},
\end{equation*}
and from the derivative of a matrix power. Applying Theorem~\ref{thm:CLT_aPM_restricted} and the delta method gives \eqref{eq:Zk_operator_form_restricted}. Independence of the two Gaussian components yields the variance decomposition, and 
\eqref{eq:dk_explicit_restricted} follows from $u^{\top}Hv=\vecc(H^{\top})^{\top}(u\otimes v)$.
\end{proof}

For $k=1$, this theorem gives the central limit theorem for $\widehat{\mathrm{MTTF}}_{\rm M}(L)$.

\subsubsection{Restricted VTTF}

Let
\begin{equation*}
\bm t_{\rm M}=\Nm\one_{{\bf U}_{\rm M}},
\qquad
\bm s_{\rm M}=\Nm^2\one_{{\bf U}_{\rm M}},
\qquad
m_{\rm M}=\ba_{{\bf U}_{\rm M}}\bm t_{\rm M}.
\end{equation*}
The restricted VTTF is
\begin{equation*}
\mathrm{VTTF}_{\rm M}
=\ba_{{\bf U}_{\rm M}}(2\Nm^2-\Nm)\one_{{\bf U}_{\rm M}}-m_{\rm M}^2.
\end{equation*}
Its derivative in the initial distribution direction is
\begin{equation*}
\bm c_{\rm M}
=(2\Nm^2-\Nm)\one_{{\bf U}_{\rm M}}-2m_{\rm M}\bm t_{\rm M}.
\end{equation*}
For a perturbation ${\bf H}$ of $\Qm$,
\begin{equation}
\label{eq:DVTTF_operator_restricted}
D_{\Qm}\mathrm{VTTF}_{\rm M}[{\bf H}]
=
\ba_{{\bf U}_{\rm M}}\{2\Nm^2-(1+2m_{\rm M})\Nm\}{\bf H}\bm t_{\rm M}
+2\ba_{{\bf U}_{\rm M}}\Nm{\bf H}\bm s_{\rm M}.
\end{equation}

\begin{theorem}
\label{thm:CLT_VTTF_restricted}
Assume the conditions of Theorem~\ref{thm:CLT_MkTTF_restricted}. Then
\begin{equation*}
\sqrt{L}\left(\widehat{\mathrm{VTTF}}_{\rm M}(L)-\mathrm{VTTF}_{\rm M}\right)
\xrightarrow[]{d}
\mathcal{Z}^{\rm V}_{\rm M},
\qquad
\mathcal{Z}^{\rm V}_{\rm M}\sim\mathcal{N}(0,\sigma^2_{\rm V,\rm M}),
\end{equation*}
where
\begin{equation*}
\mathcal{Z}^{\rm V}_{\rm M}
=(\mathbf W_{\!\ba_{\EM}})_{{\bf U}_{\rm M}}\bm c_{\rm M}
+D_{\Qm}\mathrm{VTTF}_{\rm M}[\big(\WPM\big)_{{\bf U}_{\rm M}{\bf U}_{\rm M}}].
\end{equation*}
Furthermore,
\begin{equation*}
\sigma^2_{\rm V,\rm M}
=\bm c_{\rm M}^{\top}\Gamma_{\ba,\rm M}({\bf U}_{\rm M})\bm c_{\rm M}
+(\bs d^{\rm V}_{\rm M})^{\top}\SM({\bf U}_{\rm M},{\bf U}_{\rm M})\bs d^{\rm V}_{\rm M},
\end{equation*}
with
\begin{equation*}
\bs d^{\rm V}_{\rm M}
=
\left[\{2\Nm^2-(1+2m_{\rm M})\Nm\}^{\top}\ba_{{\bf U}_{\rm M}}^{\top}\right]\otimes\bm t_{\rm M}
+
\left\{(2\Nm)^{\top}\ba_{{\bf U}_{\rm M}}^{\top}\right\}\otimes\bm s_{\rm M}.
\end{equation*}
\end{theorem}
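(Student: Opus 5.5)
The plan is to write $\mathrm{VTTF}_{\rm M}$ as a smooth map $\Phi(\ba_{{\bf U}_{\rm M}},\Qm)$ on the open set $\{\rho(Q)<1\}$ and apply the delta method to the joint limit of Theorem~\ref{thm:CLT_aPM_restricted}, following the proof of Theorem~\ref{thm:CLT_MkTTF_restricted}. By \eqref{eq:VTTF_M_population},
\begin{equation*}
\Phi(\ba,Q)=\ba\{2({\bf I}-Q)^{-2}-({\bf I}-Q)^{-1}\}\one-\{\ba({\bf I}-Q)^{-1}\one\}^2,
\end{equation*}
and the plug-in estimator is $\Phi\bigl((\widehat\ba_{\EM}(L))_{{\bf U}_{\rm M}},\widehat{\bf Q}_{\rm M}(L)\bigr)$ on $\mathcal A_L$. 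As in the factorial-moment case, $\mathcal A_L$ holds almost surely for all large $L$, so the value assigned on $\mathcal A_L^c$ does not affect the limit. The pair $(\ba_{{\bf U}_{\rm M}},\Qm)$ is a linear selection of $(\ba_{\EM},\PM)$ via $\R_{{\bf U}_{\rm M}}$. Its limit is therefore $\bigl((\mathbf W_{\!\ba_{\EM}})_{{\bf U}_{\rm M}},(\WPM)_{{\bf U}_{\rm M}{\bf U}_{\rm M}}\bigr)$, with the two components still independent.

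\textbf{Step 1 (derivative in $\ba$).} $\Phi$ is linear plus quadratic in $\ba$. Differentiating gives the map $h\mapsto h(2\Nm^2-\Nm)\one-2m_{\rm M}\,h\Nm\one=h\bm c_{\rm M}$.

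\textbf{Step 2 (derivative in $Q$).} I will use $D\Nm[{\bf H}]=\Nm{\bf H}\Nm$ and $D\Nm^2[{\bf H}]=\Nm{\bf H}\Nm^2+\Nm^2{\bf H}\Nm$. This gives
\begin{equation*}
D_Q\Phi[{\bf H}]=2\ba\Nm{\bf H}\Nm^2\one+2\ba\Nm^2{\bf H}\Nm\one-\ba\Nm{\bf H}\Nm\one-2m_{\rm M}\ba\Nm{\bf H}\Nm\one.
\end{equation*}
Grouping the terms ending in $\bm t_{\rm M}=\Nm\one$ and those ending in $\bm s_{\rm M}=\Nm^2\one$ yields \eqref{eq:DVTTF_operator_restricted}.

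\textbf{Step 3 (limit and variance).} The delta method gives the stated limit $\mathcal Z^{\rm V}_{\rm M}=(\mathbf W_{\!\ba_{\EM}})_{{\bf U}_{\rm M}}\bm c_{\rm M}+D_{\Qm}\mathrm{VTTF}_{\rm M}[(\WPM)_{{\bf U}_{\rm M}{\bf U}_{\rm M}}]$. This limit is Gaussian, being linear in jointly Gaussian objects. By independence, its variance is the sum of the two component variances.

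\begin{itemize}
\item The first component has variance $\bm c_{\rm M}^\top\Gamma_{\ba,\rm M}({\bf U}_{\rm M})\bm c_{\rm M}$, by \eqref{eq:Var_Wa_restricted}.
\item For the second, I will rewrite each bilinear term through $u^\top{\bf H}v=\vecc({\bf H}^\top)^\top(u\otimes v)$, taking $u^\top=\ba\{2\Nm^2-(1+2m_{\rm M})\Nm\}$, $v=\bm t_{\rm M}$ and $u^\top=2\ba\Nm$, $v=\bm s_{\rm M}$. This gives $\vecc\{(\WPM)_{{\bf U}_{\rm M}{\bf U}_{\rm M}}^\top\}^\top\bs d^{\rm V}_{\rm M}$.
\item By the selector identity $\vecc\{(B_{{\bf A}{\bf C}})^\top\}=(\R_{\bf A}\otimes\R_{\bf C})\vecc(B^\top)$, the covariance of the selected vectorization is $\SM({\bf U}_{\rm M},{\bf U}_{\rm M})$. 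Hence the second component has variance $(\bs d^{\rm V}_{\rm M})^\top\SM({\bf U}_{\rm M},{\bf U}_{\rm M})\bs d^{\rm V}_{\rm M}$.
\end{itemize}

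The main obstacle is only bookkeeping in Step 2. The derivative of $\Nm^2$ has two noncommuting terms, and the cross term from $-m_{\rm M}^2$ must be collected correctly. I will check that both the $2\Nm^2$ prefactor on $\bm t_{\rm M}$ and the $2\Nm$ prefactor on $\bm s_{\rm M}$ come from $D\Nm^2$, while the $-(1+2m_{\rm M})\Nm$ prefactor comes from $-\Nm$ and $-m_{\rm M}^2$. The transpose convention in the Kronecker vector must also be kept consistent with the row-wise $\vecc(\cdot^\top)$.
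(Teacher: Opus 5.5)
Your proposal is correct and takes the same route as the paper: the delta method applied to \eqref{eq:VTTF_M_population}, using $D\Nm[{\bf H}]=\Nm{\bf H}\Nm$ and $D\Nm^2[{\bf H}]=\Nm{\bf H}\Nm^2+\Nm^2{\bf H}\Nm$, regrouping into \eqref{eq:DVTTF_operator_restricted}, and then vectorizing through $u^{\top}{\bf H}v=\vecc({\bf H}^{\top})^{\top}(u\otimes v)$ together with the selector identity. Your derivation of $\bm c_{\rm M}$, the grouping of terms by $\bm t_{\rm M}$ and $\bm s_{\rm M}$, and your treatment of $\mathcal A_L^c$ are all accurate; they spell out what the paper's short proof leaves implicit.
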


\begin{proof}
The result is a direct application of the delta method to \eqref{eq:VTTF_M_population}. The derivative of $\Nm$ is $D\Nm[H]=\Nm H\Nm$ and the derivative of $\Nm^2$ is $\Nm H\Nm^2+\Nm^2H\Nm$. Rearranging gives \eqref{eq:DVTTF_operator_restricted}. The covariance formula follows by vectorizing the resulting linear form.
\end{proof}

\subsubsection{Discrete-time intensity of the hitting time at a fixed time point}

Fix $m\in\{1,\ldots,M\}$. Let
\begin{equation*}
\bm b_{m,\rm M}=\R_{{\bf U}_m}^{\top}\big(\PM\big)_{{\bf U}_m{\bf D}_m}\one_{{\bf D}_m},
\qquad
\bm v_{m,\rm M}=\PM^{m-1}\bm b_{m,\rm M}.
\end{equation*}
Then $r(m;\rm M)=\ba_{\EM}\bm v_{m,\rm M}$.
For a perturbation ${\bf H}$ of $\PM$,
\begin{align}
\label{eq:Dr_operator_restricted}
D_{\PM}r(m;\rm M)[{\bf H}]
&=
\ba_{\EM}\left(\sum_{j=0}^{m-2}\PM^j{\bf H}\PM^{m-2-j}\right)\bm b_{m,\rm M}
\nonumber\\
&\quad+
\ba_{\EM}\PM^{m-1}\R_{{\bf U}_m}^{\top}\R_{{\bf U}_m}{\bf H}\R_{{\bf D}_m}^{\top}\one_{{\bf D}_m},
\end{align}
where the sum is empty for $m=1$.

The two terms follow from the product rule. The matrix $\PM$ enters $r(m;\rm M)$ first through $\PM^{m-1}$, which determines the distribution at time $m-1$, and again through the final operational-to-failed block
\begin{equation*}
\bm b_{m,\rm M}(\PM)
=
\R_{{\bf U}_m}^{\top}\R_{{\bf U}_m}\PM\R_{{\bf D}_m}^{\top}\one_{{\bf D}_m}.
\end{equation*}
Therefore
\begin{equation*}
D\bm b_{m,\rm M}(\PM)[{\bf H}]
=
\R_{{\bf U}_m}^{\top}\R_{{\bf U}_m}{\bf H}\R_{{\bf D}_m}^{\top}\one_{{\bf D}_m},
\end{equation*}
which is the second term in \eqref{eq:Dr_operator_restricted}. When $m=1$, the map $\PM\mapsto\PM^0={\bf I}_{\EM}$ is constant and its derivative is zero. Hence the first sum is empty and only the derivative of the one-step operational-to-failed transition remains.

\medskip
\begin{theorem}
\label{thm:CLT_r_m_restricted}
Assume the conditions of Theorem~\ref{thm:CLT_aPM_restricted}. Then, for fixed $m\leq M$,
\begin{equation*}
\sqrt{L}\left(\widehat r(m;\rm M;L)-r(m;\rm M)\right)
\xrightarrow[]{d}
\mathcal{Z}^{r}_{m,\rm M},
\qquad
\mathcal{Z}^{r}_{m,\rm M}\sim\mathcal{N}(0,\sigma^2_r(m;\rm M)),
\end{equation*}
where
\begin{equation}
\label{eq:Zr_operator_form_restricted}
\mathcal{Z}^{r}_{m,\rm M}
=\mathbf W_{\!\ba_{\EM}}\bm v_{m,\rm M}
+D_{\PM}r(m;\rm M)[\WPM].
\end{equation}
Moreover,
\begin{equation*}
\sigma^2_r(m;\rm M)
=\bm v_{m,\rm M}^{\top}\Gamma_{\ba,\rm M}\bm v_{m,\rm M}
+(\bs d^r_{m,\rm M})^{\top}\SM\bs d^r_{m,\rm M},
\end{equation*}
with
\begin{align}
\label{eq:dr_explicit_restricted}
\bs d^r_{m,\rm M}
&=
\sum_{j=0}^{m-2}
\left\{(\PM^j)^{\top}\ba_{\EM}^{\top}\right\}
\otimes
\left\{\PM^{m-2-j}\bm b_{m,\rm M}\right\}
\nonumber\\
&\quad+
\left\{\R_{{\bf U}_m}^{\top}\R_{{\bf U}_m}(\PM^{m-1})^{\top}\ba_{\EM}^{\top}\right\}
\otimes
\left\{\R_{{\bf D}_m}^{\top}\one_{{\bf D}_m}\right\}.
\end{align}
\end{theorem}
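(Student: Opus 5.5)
The argument is a direct application of the finite-dimensional delta method to the map
\begin{equation*}
\Phi_m(\ba_{\EM},\PM)=\ba_{\EM}\PM^{m-1}\bm b_{m,\rm M}(\PM),
\end{equation*}
combined with the joint Gaussian limit of Theorem~\ref{thm:CLT_aPM_restricted}. The plug-in estimator is $\widehat r(m;\rm M;L)=\Phi_m(\widehat\ba_{\EM}(L),\PMh(L))$.

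\emph{Step 1: differentiability.} $\Phi_m$ is a polynomial in the entries of $(\ba_{\EM},\PM)$. It is linear in $\ba_{\EM}$, and in $\PM$ it is a product of $m-1$ copies of $\PM$ with the linear map $\bm b_{m,\rm M}(\PM)=\R_{{\bf U}_m}^{\top}\R_{{\bf U}_m}\PM\R_{{\bf D}_m}^{\top}\one_{{\bf D}_m}$. It is therefore Fréchet differentiable everywhere. No spectral-radius condition or event $\mathcal A_L$ is needed here, in contrast with Theorem~\ref{thm:CLT_MkTTF_restricted}.

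\emph{Step 2: the differential.} By the product rule,
\begin{equation*}
D\Phi_m[{\bf h},{\bf H}]={\bf h}\,\bm v_{m,\rm M}+D_{\PM}r(m;\rm M)[{\bf H}].
\end{equation*}
The second term is exactly \eqref{eq:Dr_operator_restricted}. It uses $D(\PM^{m-1})[{\bf H}]=\sum_{j=0}^{m-2}\PM^j{\bf H}\PM^{m-2-j}$, with the empty sum when $m=1$, and the derivative of $\bm b_{m,\rm M}$ computed just before the statement. The delta method together with Theorem~\ref{thm:CLT_aPM_restricted} then gives convergence in distribution to $D\Phi_m[\mathbf W_{\!\ba_{\EM}},\WPM]$, which is \eqref{eq:Zr_operator_form_restricted}. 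This limit is centered Gaussian because it is a linear image of a jointly Gaussian pair.

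\emph{Step 3: the variance.} Independence of $\mathbf W_{\!\ba_{\EM}}$ and $\WPM$ splits the variance into two terms.
\begin{itemize}
\item The initial-law term is $\bm v_{m,\rm M}^{\top}\Gamma_{\ba,\rm M}\bm v_{m,\rm M}$, by \eqref{eq:Var_Wa_restricted}.
\item For the transition term, write each summand of the differential in the form $u^{\top}{\bf H}w$ and apply $u^{\top}{\bf H}w=\vecc({\bf H}^{\top})^{\top}(u\otimes w)$.
\end{itemize}
In the first sum of \eqref{eq:Dr_operator_restricted}, take $u=(\PM^j)^{\top}\ba_{\EM}^{\top}$ and $w=\PM^{m-2-j}\bm b_{m,\rm M}$. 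In the final term, take $u=(\ba_{\EM}\PM^{m-1}\R_{{\bf U}_m}^{\top}\R_{{\bf U}_m})^{\top}=\R_{{\bf U}_m}^{\top}\R_{{\bf U}_m}(\PM^{m-1})^{\top}\ba_{\EM}^{\top}$, using symmetry of the projector $\R_{{\bf U}_m}^{\top}\R_{{\bf U}_m}$, and $w=\R_{{\bf D}_m}^{\top}\one_{{\bf D}_m}$. Summing these gives $D_{\PM}r[{\bf H}]=(\bs d^r_{m,\rm M})^{\top}\vecc({\bf H}^{\top})$ with $\bs d^r_{m,\rm M}$ as in \eqref{eq:dr_explicit_restricted}. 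Hence the variance of this term is $(\bs d^r_{m,\rm M})^{\top}\SM\bs d^r_{m,\rm M}$.

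\emph{Main obstacle.} The analysis itself is routine; the care lies in the bookkeeping of Step 3. Two points need checking. First, the row-wise vectorization convention $\vecc({\bf H}^{\top})$ must match the block structure of $\SM$ in \eqref{eq:Sigma_PM_restricted}. Second, the selector matrices must be transposed correctly when moving $\R_{{\bf U}_m}$ from the right of $\ba_{\EM}\PM^{m-1}$ into the column vector $u$.

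There is also a minor point to confirm: $\PM$ may be substochastic at a boundary row. This does not affect the argument, because Theorem~\ref{thm:CLT_aPM_restricted} already covers substochastic rows, and $\Phi_m$ involves only entries inside $\EM$.
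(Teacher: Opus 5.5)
Your proposal is correct and takes essentially the same route as the paper. Like the paper, you apply the delta method, via Theorem~\ref{thm:CLT_aPM_restricted}, to the polynomial map $(\ba_{\EM},\PM)\mapsto\ba_{\EM}\PM^{m-1}\bm b_{m,\rm M}(\PM)$, split the variance using independence of $\mathbf W_{\!\ba_{\EM}}$ and $\WPM$, and vectorize with $u^{\top}{\bf H}w=\vecc({\bf H}^{\top})^{\top}(u\otimes w)$; your explicit choices of $u$ and $w$, including the symmetric projector $\R_{{\bf U}_m}^{\top}\R_{{\bf U}_m}$, spell out the bookkeeping that the paper's short proof leaves implicit.
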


\begin{proof}
The derivative of the power $\PM^{m-1}$ gives the first term in \eqref{eq:Dr_operator_restricted}; the derivative of the final operational-to-failed block gives the second term. The central limit theorem and the variance expression follow from Theorem~\ref{thm:CLT_aPM_restricted} and the row-wise vectorization identity $u^{\top}Hv=\vecc(H^{\top})^{\top}(u\otimes v)$.
\end{proof}

\subsection{Joint inference for restricted reliability indicators}
\label{subsec:joint_inference_restricted_indicators}

The individual asymptotic results obtained above all rely on the same empirical restricted initial law and the same empirical restricted transition matrix. They can therefore be combined to obtain joint inference for several reliability quantities evaluated on the common finite horizon. We first use this idea for the finite-dimensional sequence of the discrete-time intensity of the hitting time, where simultaneous confidence envelopes are often more informative than separate pointwise intervals. We then apply the same joint linear representation to restricted moment characteristics.

\subsubsection{Finite-dimensional DTIHT sequence}
\label{subsec:DTIHT_curve_joint}

The preceding theorem gives inference for the discrete-time intensity of the hitting time at a fixed time point. In applications, however, the object is often the whole finite sequence of the DTIHT over the observation horizon. For the fixed observation horizon $M$, set
\begin{equation*}
\bs r_{\rm M}=\bigl(r(1;\rm M),\ldots,r(M;\rm M)\bigr)^{\top},
\qquad
\widehat{\bs r}_{\rm M}(L)=\bigl(\widehat r(1;\rm M;L),\ldots,\widehat r(M;\rm M;L)\bigr)^{\top}.
\end{equation*}
Let
\begin{equation*}
{\cal V}_{r,\rm M}=\bigl(\bm v_{1,\rm M},\ldots,\bm v_{M,\rm M}\bigr),
\qquad
{\cal D}_{r,\rm M}=\bigl(\bs d^r_{1,\rm M},\ldots,\bs d^r_{M,\rm M}\bigr).
\end{equation*}
The matrix ${\cal V}_{r,\rm M}$ collects the derivatives with respect to the initial distribution, whereas ${\cal D}_{r,\rm M}$ collects the derivatives with respect to the restricted transition matrix.

\medskip
\begin{theorem}
\label{thm:CLT_r_curve_restricted}
Assume the conditions of Theorem~\ref{thm:CLT_aPM_restricted}. Then
\begin{equation*}
\sqrt{L}\left(\widehat{\bs r}_{\rm M}(L)-\bs r_{\rm M}\right)
\xrightarrow[]{d}
\boldsymbol{\mathcal{Z}}^{r}_{\rm M},
\qquad
\boldsymbol{\mathcal{Z}}^{r}_{\rm M}\sim \mathcal{N}_M(\bs 0,\Gamma^r_{\rm M}),
\end{equation*}
where
\begin{equation*}
\boldsymbol{\mathcal{Z}}^{r}_{\rm M}
=
{\cal V}_{r,\rm M}^{\top}\mathbf W_{\!\ba_{\EM}}^{\top}
+
{\cal D}_{r,\rm M}^{\top}\vecc(\WPM^{\top})
\end{equation*}
and
\begin{equation}
\label{eq:Gamma_r_curve_restricted}
\Gamma^r_{\rm M}
=
{\cal V}_{r,\rm M}^{\top}\Gamma_{\ba,\rm M}{\cal V}_{r,\rm M}
+
{\cal D}_{r,\rm M}^{\top}\SM {\cal D}_{r,\rm M}.
\end{equation}
\end{theorem}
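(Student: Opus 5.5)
The plan is to treat the whole vector $\widehat{\bs r}_{\rm M}(L)$ as a single smooth map of the pair $(\widehat\ba_{\EM}(L),\PMh(L))$ and apply the multivariate delta method once, on top of the joint limit of Theorem~\ref{thm:CLT_aPM_restricted}. Define
\begin{equation*}
\Psi:(\ba,{\bf P})\longmapsto\bigl(\ba\,{\bf P}^{m-1}\R_{{\bf U}_m}^{\top}\R_{{\bf U}_m}{\bf P}\R_{{\bf D}_m}^{\top}\one_{{\bf D}_m}\bigr)_{m=1,\ldots,M},
\end{equation*}
so that $\bs r_{\rm M}=\Psi(\ba_{\EM},\PM)$ and $\widehat{\bs r}_{\rm M}(L)=\Psi(\widehat\ba_{\EM}(L),\PMh(L))$. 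Each coordinate is a polynomial in the entries of $(\ba,{\bf P})$. Hence $\Psi$ is $C^\infty$ on the whole parameter space, with no spectral-radius restriction and no event on which the estimator has to be redefined.

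First I will compute the Fréchet derivative of $\Psi$ coordinatewise. For coordinate $m$, the derivative in the initial-law direction ${\bf h}$ is ${\bf h}\bm v_{m,\rm M}$, because $r(m;\rm M)$ is linear in $\ba$. The derivative in the transition direction ${\bf H}$ is exactly $D_{\PM}r(m;\rm M)[{\bf H}]$ from \eqref{eq:Dr_operator_restricted}, which is already justified by the product rule and the derivative of the matrix power. The full derivative is the sum of these two parts.

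Next I will invoke the joint convergence $\sqrt L(\widehat\ba_{\EM}-\ba_{\EM},\PMh-\PM)\xrightarrow{d}(\mathbf W_{\!\ba_{\EM}},\WPM)$. Because $\Psi$ is differentiable at the true point, the delta method gives
\begin{equation*}
\sqrt L(\widehat{\bs r}_{\rm M}-\bs r_{\rm M})\xrightarrow{d}\Bigl(\mathbf W_{\!\ba_{\EM}}\bm v_{m,\rm M}+D_{\PM}r(m;\rm M)[\WPM]\Bigr)_{m=1}^M .
\end{equation*}
The essential point is that this is one application of the delta method to the joint vector, not $M$ separate applications of Theorem~\ref{thm:CLT_r_m_restricted}. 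That is what delivers the cross-covariances between different $m$.

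To reach the stated matrix form I will vectorize each coordinate using the identity $u^{\top}{\bf H}v=\vecc({\bf H}^{\top})^{\top}(u\otimes v)$, exactly as in the proof of Theorem~\ref{thm:CLT_r_m_restricted}. This gives
\begin{equation*}
D_{\PM}r(m;\rm M)[\WPM]=(\bs d^r_{m,\rm M})^{\top}\vecc(\WPM^{\top}),
\end{equation*}
with $\bs d^r_{m,\rm M}$ as in \eqref{eq:dr_explicit_restricted}. Similarly, $\mathbf W_{\!\ba_{\EM}}\bm v_{m,\rm M}=\bm v_{m,\rm M}^{\top}\mathbf W_{\!\ba_{\EM}}^{\top}$. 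Stacking over $m$ gives
\begin{equation*}
\boldsymbol{\mathcal Z}^r_{\rm M}={\cal V}_{r,\rm M}^{\top}\mathbf W_{\!\ba_{\EM}}^{\top}+{\cal D}_{r,\rm M}^{\top}\vecc(\WPM^{\top}).
\end{equation*}
This is a fixed linear image of a centered Gaussian vector, so it is centered Gaussian in $\mathbb R^M$.

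Finally, for the covariance I will use the independence of $\mathbf W_{\!\ba_{\EM}}$ and $\WPM$ from Theorem~\ref{thm:CLT_aPM_restricted}. Independence makes the cross term vanish, which yields \eqref{eq:Gamma_r_curve_restricted} with $\Gamma_{\ba,\rm M}$ from \eqref{eq:Var_Wa_restricted} and $\SM$ from \eqref{eq:Sigma_PM_restricted}.

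I expect the main obstacle to be bookkeeping rather than analysis. The Kronecker ordering must match the row-wise vectorization $\vecc(\WPM^{\top})$ used in defining $\SM$. In particular, the selector factors $\R_{{\bf U}_m}^{\top}\R_{{\bf U}_m}$ and $\R_{{\bf D}_m}^{\top}\one_{{\bf D}_m}$ must sit on the correct sides of the Kronecker product. I will check this on the $m=1$ term, where only the second summand survives.

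I also need to address the case where the formula involves a row with zero exposure. This can only happen if that row is not required, in which case its coefficient in $\bs d^r_{m,\rm M}$ vanishes. Otherwise, under the standing assumption $\pi_{x\mid\rm M}>0$ for all $x$, the issue does not arise.
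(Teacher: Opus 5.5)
Your proposal is correct and takes essentially the same route as the paper. Both linearize all $M$ ordinates in terms of the same jointly converging pair from Theorem~\ref{thm:CLT_aPM_restricted}, vectorize via $u^{\top}{\bf H}v=\vecc({\bf H}^{\top})^{\top}(u\otimes v)$ to obtain ${\cal D}_{r,\rm M}$, and use the independence of $\mathbf W_{\!\ba_{\EM}}$ and $\WPM$ for the covariance; the paper phrases the joint step as per-ordinate first-order terms combined through the Cram\'er--Wold device, whereas you apply a single multivariate delta method to the polynomial map $\Psi$, which is the same argument.
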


\begin{proof}
For every fixed $m$, Theorem~\ref{thm:CLT_r_m_restricted} gives the first-order term. Since only finitely many times are involved, the Cram\'er--Wold device gives the joint convergence. The covariance matrix is obtained by assembling the same linear forms that appear in \eqref{eq:Zr_operator_form_restricted} and \eqref{eq:dr_explicit_restricted}.
\end{proof}

The above theorem illustrates the use of the matrix calculus. Replacing the unknown quantities in \eqref{eq:Gamma_r_curve_restricted} by their plug-in versions gives $\widehat\Gamma^r_{\rm M}(L)$. Pointwise intervals are
\begin{equation*}
\widehat r(m;\rm M;L)
\pm
z_{1-\gamma/2}\left\{\frac{\widehat\Gamma^r_{\rm M}(L)_{mm}}{L}\right\}^{1/2},
\qquad m=1,\ldots,M.
\end{equation*}
Here a pointwise confidence interval refers to one fixed ordinate $m$, whereas a simultaneous confidence envelope is a family of intervals constructed to cover the whole finite sequence $m=1,\ldots,M$ with probability $1-\gamma$ asymptotically.
For a simultaneous finite-dimensional confidence envelope, let $\widehat D_{r,\rm M}$ be the diagonal matrix with diagonal entries
\begin{equation*}
\{\widehat\Gamma^r_{\rm M}(L)_{mm}\}^{1/2},\qquad m=1,\ldots,M.
\end{equation*}
Coordinates with zero estimated variance are omitted. Let $c_{1-\gamma}$ be the plug-in Gaussian $(1-\gamma)$-quantile of
\begin{equation*}
\max_{1\leq m\leq M}|G_m|,
\qquad
\bs G\sim\mathcal{N}_M\left(\bs 0,
\widehat D_{r,\rm M}^{-1}\widehat\Gamma^r_{\rm M}(L)\widehat D_{r,\rm M}^{-1}\right),
\end{equation*}
then the intervals
\begin{equation*}
\widehat r(m;\rm M;L)
\pm
c_{1-\gamma}\left\{\frac{\widehat\Gamma^r_{\rm M}(L)_{mm}}{L}\right\}^{1/2},
\qquad m=1,\ldots,M,
\end{equation*}
have asymptotic simultaneous coverage $1-\gamma$. A Bonferroni confidence envelope is obtained by replacing $c_{1-\gamma}$ by $z_{1-\gamma/(2M)}$. The latter is simple but does not use the covariance structure of the curve.

The diagonal standardization makes the maximum statistic studentized: each ordinate is measured in its estimated standard-error units, while the plug-in correlation matrix retains the dependence between time points. Consistency of the plug-in covariance gives the stated asymptotic coverage, but no exact finite-sample coverage is implied. For moderate $L$, sparse required rows, highly correlated ordinates or very small estimated variances, the estimated correlation matrix and the resulting critical value can be unstable. The minimum row exposures and the omitted zero-variance coordinates should therefore be reported with the envelope.

A trajectory-level bootstrap provides an alternative. Resample the $L$ complete trajectories, recompute the transition estimator, the DTIHT sequence and its standard errors, and use the conditional quantile of
\begin{equation*}
\max_{m:\,\widehat\sigma_m>0}
\frac{\sqrt L\,|\widehat r^{\ast}(m;\rm M)-\widehat r(m;\rm M)|}
{\widehat\sigma_m^{\ast}}
\end{equation*}
as the simultaneous critical value. Resampling entire trajectories preserves their within-trajectory dependence.

A multiplier bootstrap can be constructed from the trajectory-level influence representation in Theorem~\ref{thm:CLT_r_curve_restricted}. Let $\widehat{\bm\psi}_{\ell}\in\mathbb R^M$ be the estimated influence vector of trajectory $\ell$ and let
\begin{equation*}
\overline{\bm\psi}=\frac{1}{L}\sum_{\ell=1}^{L}\widehat{\bm\psi}_{\ell}.
\end{equation*}
Generate independent multipliers $\xi_1,\ldots,\xi_L$ with mean zero and variance one, and set
\begin{equation*}
\bm Z^{\dagger}
=
\frac{1}{\sqrt L}\sum_{\ell=1}^{L}
\xi_{\ell}\bigl(\widehat{\bm\psi}_{\ell}-\overline{\bm\psi}\bigr).
\end{equation*}
The same multiplier is applied to all ordinates contributed by a trajectory, so the estimated dependence over time is retained. The conditional quantile of
\begin{equation*}
\max_{m:\,\widehat\sigma_m>0}
\frac{|Z_m^{\dagger}|}{\widehat\sigma_m}
\end{equation*}
is used as the simultaneous critical value. This procedure does not require re-estimation of the transition matrix for each multiplier draw. We do not establish a higher-order comparison between the two bootstrap procedures; see also \citet{VBouz25}.

Linear summaries of the curve are handled without new differentiations. If ${\bf C}$ is a fixed matrix, then
\begin{equation*}
\sqrt{L}\,{\bf C}\left(\widehat{\bs r}_{\rm M}(L)-\bs r_{\rm M}\right)
\xrightarrow[]{d}
\mathcal{N}\left(\bs 0,{\bf C}\Gamma^r_{\rm M}{\bf C}^{\top}\right).
\end{equation*}
For example, the cumulative discrete-time intensity up to time $m_0\leq M$ corresponds to ${\bf C}=(1,\ldots,1,0,\ldots,0)$. Similarly, linear restrictions ${\bf C}\bs r_{\rm M}=\bm c$ may be tested by the corresponding Wald statistic whenever ${\bf C}\Gamma^r_{\rm M}{\bf C}^{\top}$ has full row rank; otherwise a generalized inverse and the rank of this matrix are used.

\subsubsection{Restricted factorial moments and moment characteristics}
\label{subsec:joint_moment_characteristics}

The factorial-moment formulas are also naturally joint. Fix an integer $K\geq 1$ and define
\begin{align*}
\bs F_{K,\rm M}
&=\bigl(\mathrm{M_kTTF}_{\rm M}\bigr)_{k=1}^{K},\\
\widehat{\bs F}_{K,\rm M}(L)
&=\bigl(\widehat{\mathrm{M_kTTF}}_{\rm M}(L)\bigr)_{k=1}^{K}.
\end{align*}
Let
\begin{equation*}
\widetilde{\bm\beta}^{(k)}_{\rm M}=k!\,\bm\beta^{(k)}_{\rm M},
\qquad
\widetilde{\bs d}^{(k)}_{\rm M}=k!\,\bs d^{(k)}_{\rm M},
\qquad k=1,\ldots,K,
\end{equation*}
where $\bm\beta^{(k)}_{\rm M}$ and $\bs d^{(k)}_{\rm M}$ are those of Theorem~\ref{thm:CLT_MkTTF_restricted}. Put
\begin{equation*}
{\cal B}_{K,\rm M}=\bigl(\widetilde{\bm\beta}^{(1)}_{\rm M},\ldots,\widetilde{\bm\beta}^{(K)}_{\rm M}\bigr),
\qquad
{\cal D}_{K,\rm M}=\bigl(\widetilde{\bs d}^{(1)}_{\rm M},\ldots,\widetilde{\bs d}^{(K)}_{\rm M}\bigr).
\end{equation*}

\begin{theorem}
\label{thm:CLT_factorial_vector_restricted}
Assume the conditions of Theorem~\ref{thm:CLT_MkTTF_restricted}. Then
\begin{equation*}
\sqrt{L}\left(\widehat{\bs F}_{K,\rm M}(L)-\bs F_{K,\rm M}\right)
\xrightarrow[]{d}
\mathcal{N}_K(\bs 0,\Omega_{K,\rm M}),
\end{equation*}
where
\begin{equation*}
\Omega_{K,\rm M}
=
{\cal B}_{K,\rm M}^{\top}\Gamma_{\ba,\rm M}({\bf U}_{\rm M}){\cal B}_{K,\rm M}
+
{\cal D}_{K,\rm M}^{\top}\SM({\bf U}_{\rm M},{\bf U}_{\rm M}){\cal D}_{K,\rm M}.
\end{equation*}
\end{theorem}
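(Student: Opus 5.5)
The plan is to treat the vector $\bs F_{K,\rm M}$ as a single smooth map of $(\ba_{\EM},\PM)$ and apply the multivariate delta method once, rather than combining the $K$ separate limits of Theorem~\ref{thm:CLT_MkTTF_restricted}.

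First, define
\begin{equation*}
\Phi_K(\ba_{\EM},\PM)=\bigl(k!\,\ba_{{\bf U}_{\rm M}}\Qm^{k-1}\Nm^k\one_{{\bf U}_{\rm M}}\bigr)_{k=1}^{K}.
\end{equation*}
Each coordinate is the composition of three maps: coordinate selection $\ba_{\EM}\mapsto\ba_{{\bf U}_{\rm M}}$, block extraction $\PM\mapsto\Qm$, and the map $Q\mapsto k!\,Q^{k-1}({\bf I}-Q)^{-k}$. The last map is differentiable on the open set $\{Q:\rho(Q)<1\}$. Hence $\Phi_K$ is differentiable at $(\ba_{\EM},\PM)$ whenever $\rho(\Qm)<1$.

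On the event $\mathcal A_L$, we have $\widehat{\bs F}_{K,\rm M}(L)=\Phi_K(\widehat\ba_{\EM}(L),\PMh(L))$. As noted before Theorem~\ref{thm:CLT_MkTTF_restricted}, $\mathcal A_L$ holds almost surely for all large $L$, so the arbitrary values assigned on $\mathcal A_L^c$ do not affect the limit.

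Second, invoke the joint limit of Theorem~\ref{thm:CLT_aPM_restricted} for $\sqrt L(\widehat\ba_{\EM}-\ba_{\EM},\PMh-\PM)$ and apply the delta method to $\Phi_K$. This gives joint convergence of the $K$-vector to
\begin{equation*}
\bigl(\mathcal Z^{(1)}_{\rm M},\ldots,\mathcal Z^{(K)}_{\rm M}\bigr)^{\top},
\end{equation*}
where each $\mathcal Z^{(k)}_{\rm M}$ is given by \eqref{eq:Zk_operator_form_restricted}. All coordinates are driven by the same Gaussian pair $(\mathbf W_{\!\ba_{\EM}},\WPM)$. This common dependence is the point of the result: it yields the cross-covariances, which the marginal statements of Theorem~\ref{thm:CLT_MkTTF_restricted} do not give. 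The same joint convergence could also be obtained from the marginal theorem by the Cram\'er--Wold device, since any linear combination of the coordinates is again a differentiable functional of $(\ba_{\EM},\PM)$.

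Third, write the limit in vectorized linear form. Since $\vecc$ of the ${\bf U}_{\rm M}{\bf U}_{\rm M}$ block is $(\R_{{\bf U}_{\rm M}}\otimes\R_{{\bf U}_{\rm M}})\vecc(\WPM^{\top})$, the identity $u^{\top}Hv=\vecc(H^{\top})^{\top}(u\otimes v)$ from the proof of Theorem~\ref{thm:CLT_MkTTF_restricted} gives
\begin{equation*}
\mathcal Z^{(k)}_{\rm M}=(\mathbf W_{\!\ba_{\EM}})_{{\bf U}_{\rm M}}\widetilde{\bm\beta}^{(k)}_{\rm M}+(\widetilde{\bs d}^{(k)}_{\rm M})^{\top}\vecc\bigl\{((\WPM)_{{\bf U}_{\rm M}{\bf U}_{\rm M}})^{\top}\bigr\}.
\end{equation*}
Stacking over $k$, the limit vector equals
\begin{equation*}
{\cal B}_{K,\rm M}^{\top}(\mathbf W_{\!\ba_{\EM}})_{{\bf U}_{\rm M}}^{\top}+{\cal D}_{K,\rm M}^{\top}\vecc\bigl\{((\WPM)_{{\bf U}_{\rm M}{\bf U}_{\rm M}})^{\top}\bigr\}.
\end{equation*}

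Fourth, compute the covariance. Independence of $\mathbf W_{\!\ba_{\EM}}$ and $\WPM$ (Theorem~\ref{thm:CLT_aPM_restricted}) removes the cross terms. The covariance of $(\mathbf W_{\!\ba_{\EM}})_{{\bf U}_{\rm M}}^{\top}$ is $\Gamma_{\ba,\rm M}({\bf U}_{\rm M})$, and the covariance of the vectorized block is $\SM({\bf U}_{\rm M},{\bf U}_{\rm M})$. Together these give $\Omega_{K,\rm M}$.

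The main obstacle is bookkeeping rather than analysis: the block selection must commute correctly with row-wise vectorization, so that the $\widetilde{\bs d}^{(k)}_{\rm M}$ computed for perturbations of $\Qm$ pair with $\SM({\bf U}_{\rm M},{\bf U}_{\rm M})$ and not with the full $\SM$. I would check this first with the identity $\vecc\{(B_{{\bf A}{\bf C}})^{\top}\}=(\R_{\bf A}\otimes\R_{\bf C})\vecc(B^{\top})$ stated at the start of Section~\ref{sec:asymp_indicators}. The analytic ingredients, namely differentiability and the almost-sure occurrence of $\mathcal A_L$, are already in place.
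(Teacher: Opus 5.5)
Your proposal is correct and is essentially the paper's own argument. The paper's proof observes that the $K$ scalar limits of Theorem~\ref{thm:CLT_MkTTF_restricted} are linear forms in the same Gaussian pair and reads off $\Omega_{K,\rm M}$ from the independence of the two components. You additionally make explicit the joint (vector) delta-method step, which is what actually justifies the cross-covariances, and the block-selection/row-wise vectorization bookkeeping; the paper leaves both implicit.
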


\begin{proof}
The scalar limits in Theorem~\ref{thm:CLT_MkTTF_restricted} are linear functions of the same Gaussian pair
\begin{equation*}
\left((\mathbf W_{\!\ba_{\EM}})_{{\bf U}_{\rm M}},(\WPM)_{{\bf U}_{\rm M}{\bf U}_{\rm M}}\right).
\end{equation*}
Considering these linear forms jointly yields the stated multivariate Gaussian limit and the corresponding covariance matrix.
\end{proof}

For $K=4$, this result gives joint inference for the usual moment shape descriptors of $T_{D,\rm M}$. Denote the first four factorial moments by $F_1,F_2,F_3,F_4$ and the first four ordinary moments by $\eta_1,\eta_2,\eta_3,\eta_4$. Then
\begin{align*}
\eta_1&=F_1,\\
\eta_2&=F_2+F_1,\\
\eta_3&=F_3+3F_2+F_1,\\
\eta_4&=F_4+6F_3+7F_2+F_1.
\end{align*}
Consequently, on the set where the variance is positive, the map
\begin{equation*}
(F_1,F_2,F_3,F_4)
\longmapsto
\bs\theta_{\rm M}=(\mu_{\rm M},s_{\rm M},\gamma_{1,\rm M},\gamma_{2,\rm M})^{\top}
\end{equation*}
is smooth, where
\begin{align*}
\mu_{\rm M}&=\eta_1,\\
\nu_{\rm M}&=\eta_2-\eta_1^2,\\
s_{\rm M}&=\nu_{\rm M}^{1/2},\\
\gamma_{1,\rm M}&=\frac{\eta_3-3\eta_1\eta_2+2\eta_1^3}{\nu_{\rm M}^{3/2}},\\
\gamma_{2,\rm M}&=\frac{\eta_4-4\eta_1\eta_3+6\eta_1^2\eta_2-3\eta_1^4}{\nu_{\rm M}^{2}}.
\end{align*}
Here $\gamma_{1,\rm M}$ is the coefficient of skewness and $\gamma_{2,\rm M}$ is the Pearson coefficient of kurtosis, not excess kurtosis. If ${\bf J}_{h,\rm M}$ denotes the Jacobian of this map at $(F_1,F_2,F_3,F_4)$, then
\begin{equation*}
\sqrt{L}\left(\widehat{\bs\theta}_{\rm M}(L)-\bs\theta_{\rm M}\right)
\xrightarrow[]{d}
\mathcal{N}_4\left(\bs 0,
{\bf J}_{h,\rm M}\Omega_{4,\rm M}{\bf J}_{h,\rm M}^{\top}\right).
\end{equation*}
Writing ${\bf V}_{\theta,\rm M}={\bf J}_{h,\rm M}\Omega_{4,\rm M}{\bf J}_{h,\rm M}^{\top}$, this limit gives the usual Wald-type joint confidence region for the restricted mean, standard deviation, skewness and kurtosis. For a confidence level $1-\gamma$, its plug-in version is the ellipsoidal set
\begin{equation*}
\left\{\bs\theta:\, L\{\widehat{\bs\theta}_{\rm M}(L)-\bs\theta\}^{\top}\widehat{\bf V}_{\theta,\rm M}^{-1}\{\widehat{\bs\theta}_{\rm M}(L)-\bs\theta\}\leq \chi^2_{4,1-\gamma}\right\},
\end{equation*}
when $\widehat{\bf V}_{\theta,\rm M}$ is nonsingular. Marginal intervals are obtained from the diagonal entries. The same calculation applies to any smooth transformation of a finite number of restricted factorial moments.

\subsection{Finite-order refinements}
\label{subsec:higher_order_restricted}

The first-order limits above are sufficient for the consistency, asymptotic normality and confidence procedures developed in this paper. Some numerical comparisons below also use first- and second-order approximations, so we record here the corresponding finite-order stochastic expansions. Because the present functionals are finite-dimensional and smooth on the domain where the required inverses exist, repeated matrix differentiation gives the following expansions. Put
\begin{equation*}
\Delta_L=
\left({\bf H}_{\ba,L},{\bf H}_{{\bf P},L}\right)
=\left(
\sqrt{L}\{\widehat\ba_{\EM}(L)-\ba_{\EM}\},
\sqrt{L}\{\PMh(L)-\PM\}
\right).
\end{equation*}
If $\Phi(\ba,\PM)$ is twice continuously differentiable in a neighbourhood of the true value, then
\begin{align}
\label{eq:finite_order_expansion_restricted}
\Phi(\widehat\ba_{\EM}(L),\PMh(L))
&=\Phi(\ba_{\EM},\PM)
  +L^{-1/2}D\Phi_{(\ba_{\EM},\PM)}[\Delta_L]
\nonumber\\
&\quad+
\frac{1}{2L}
D^2\Phi_{(\ba_{\EM},\PM)}[\Delta_L,\Delta_L]
+o_p(L^{-1}).
\end{align}
This formula is not used to prove consistency or the first-order variances. It gives the second-order term in an approximation on the original scale. A complete order $L^{-1}$ bias correction would also require the order $L^{-1}$ bias of the estimators of $\ba_{\EM}$ and $\PM$.

For the restricted mean, define
\begin{equation*}
\Phi_{\rm M}(\ba,\PM)=\ba_{{\bf U}_{\rm M}}\Nm\one_{{\bf U}_{\rm M}},
\qquad
\bm t_{\rm M}=\Nm\one_{{\bf U}_{\rm M}}.
\end{equation*}
In the direction $(\bm h,{\bf H})$, the first differential is
\begin{equation}
\label{eq:DMTTF_restricted}
D\Phi_{\rm M}[\bm h,{\bf H}]
=
\bm h_{{\bf U}_{\rm M}}\bm t_{\rm M}
+
\ba_{{\bf U}_{\rm M}}\Nm
{\bf H}_{{\bf U}_{\rm M}{\bf U}_{\rm M}}
\bm t_{\rm M}.
\end{equation}
The second-order term, that is, one half of the second differential evaluated twice in the same direction, is
\begin{align*}
C^{\rm MTTF}_{\rm M}[\bm h,{\bf H}]
&=
\bm h_{{\bf U}_{\rm M}}\Nm
{\bf H}_{{\bf U}_{\rm M}{\bf U}_{\rm M}}
\bm t_{\rm M}
\\
&\quad+
\ba_{{\bf U}_{\rm M}}\Nm
{\bf H}_{{\bf U}_{\rm M}{\bf U}_{\rm M}}
\Nm
{\bf H}_{{\bf U}_{\rm M}{\bf U}_{\rm M}}
\bm t_{\rm M}.
\end{align*}
Hence the Gaussian approximation including the quadratic curvature term is obtained from
\begin{equation}
\label{eq:MTTF_first_second_approx_restricted}
\mathrm{MTTF}_{\rm M}
+L^{-1/2}D\Phi_{\rm M}[\mathbf W_{\!\ba_{\EM}},\WPM]
+L^{-1}C^{\rm MTTF}_{\rm M}[\mathbf W_{\!\ba_{\EM}},\WPM].
\end{equation}
The first-order Gaussian approximation is obtained from \eqref{eq:MTTF_first_second_approx_restricted} by deleting the last term. The quadratic approximation is a curvature-adjusted Gaussian approximation; no Edgeworth-order accuracy is claimed.

For the intensity ordinate, put
\begin{equation*}
{\bf J}_{m,\rm M}=\R_{{\bf U}_m}^{\top}\R_{{\bf U}_m},
\qquad
\bm e_{m,\rm M}=\R_{{\bf D}_m}^{\top}\one_{{\bf D}_m},
\end{equation*}
so that
\begin{equation*}
r(m;\rm M)=\ba_{\EM}\PM^{m-1}{\bf J}_{m,\rm M}\PM\bm e_{m,\rm M}.
\end{equation*}
The corresponding second-order term is
\begin{align*}
C^r_{m,\rm M}[\bm h,{\bf H}]
&=
\bm h
\left\{
\sum_{j=0}^{m-2}
\PM^j{\bf H}\PM^{m-2-j}{\bf J}_{m,\rm M}\PM
+
\PM^{m-1}{\bf J}_{m,\rm M}{\bf H}
\right\}
\bm e_{m,\rm M}
\\
&\quad+
\ba_{\EM}
\left\{
\sum_{a+b+c=m-3}
\PM^a{\bf H}\PM^b{\bf H}\PM^c{\bf J}_{m,\rm M}\PM
\right.
\\
&\hspace{28mm}\left.
+
\sum_{j=0}^{m-2}
\PM^j{\bf H}\PM^{m-2-j}{\bf J}_{m,\rm M}{\bf H}
\right\}
\bm e_{m,\rm M},
\end{align*}
with the convention that empty sums are zero. In numerical work one may simulate the limiting Gaussian pair and compare the first-order and second-order approximations of the plug-in estimator.

\begin{myrem}
\label{rem:second_order_bias_MLE}
The second-order term in \eqref{eq:finite_order_expansion_restricted} is not, by itself, a bias correction. The transition estimator is a ratio estimator. For one trajectory define
\begin{equation*}
A_{xy}=\sum_{m=0}^{M-1}\mathds{1}_{\{X_m=x,X_{m+1}=y\}},
\qquad
B_x=\sum_{m=0}^{M-1}\mathds{1}_{\{X_m=x\}}.
\end{equation*}
If $\pi_{x\mid\rm M}>0$, then the probability that the total occupation count of row $x$ is zero is exponentially small, and the second-order expansion of the ratio gives
\begin{equation}
\label{eq:transition_ratio_bias_restricted}
\EE\{\widehat P_{xy}(L)\}-P(x,y)
=
\frac{1}{L}
\frac{P(x,y)\VV(B_x)-\cov(A_{xy},B_x)}{\pi_{x\mid\rm M}^{2}}
+o(L^{-1}).
\end{equation}
Denote the coefficient of $L^{-1}$ in \eqref{eq:transition_ratio_bias_restricted} by $B_{P,xy}$. Since the empirical restricted initial law is unbiased, there is no analogous term for $\ba_{\EM}$. If, in addition, the second-order Taylor remainder is uniformly integrable, then for a twice continuously differentiable scalar functional $\Phi(\ba,\PM)$,
\begin{align*}
&\EE\{\Phi(\widehat\ba_{\EM}(L),\PMh(L))\}
-
\Phi(\ba_{\EM},\PM)
\\
&\quad=
\frac{1}{L}
\left[
D_{\bf P}\Phi[B_P]
+
\frac{1}{2}
\EE\left\{
D^2\Phi[(\mathbf W_{\!\ba_{\EM}},\WPM),(\mathbf W_{\!\ba_{\EM}},\WPM)]
\right\}
\right]
+o(L^{-1}).
\end{align*}
A full analytical correction therefore requires two finite-horizon covariance terms: $\VV(B_x)$ and $\operatorname{Cov}(A_{xy},B_x)$. They can be computed by finite-state recursions or estimated by a parametric bootstrap from the fitted restricted coupled chain. In the numerical section below, the second-order term is used only to form a second-order approximation.

These quantities are also directly estimable from the independent trajectories because $A_{xy}^{(\ell)}$ and $B_x^{(\ell)}$ are observed trajectory-level counts. Writing $\overline{A}_{xy}=L^{-1}\sum_{\ell}A_{xy}^{(\ell)}$ and $\overline{B}_x=L^{-1}\sum_{\ell}B_x^{(\ell)}$, one may use
\begin{align*}
\widehat{\VV}(B_x)
&=
\frac{1}{L-1}\sum_{\ell=1}^{L}
\bigl(B_x^{(\ell)}-\overline{B}_x\bigr)^2,\\
\widehat{\operatorname{Cov}}(A_{xy},B_x)
&=
\frac{1}{L-1}\sum_{\ell=1}^{L}
\bigl(A_{xy}^{(\ell)}-\overline{A}_{xy}\bigr)
\bigl(B_x^{(\ell)}-\overline{B}_x\bigr).
\end{align*}
Substitution in \eqref{eq:transition_ratio_bias_restricted} estimates the ratio-bias coefficient $B_{P,xy}$. Exact model-based values can instead be obtained by finite-horizon forward recursions for the second moments of the occupation and transition counts, while a trajectory bootstrap can estimate their combined finite-sample effect without separating the two contributions. The numerical comparison in this paper uses only the curvature term and is not labelled as a bias-corrected estimator.

\end{myrem}

The variances and covariance matrices in Theorems~\ref{thm:CLT_MkTTF_restricted},~\ref{thm:CLT_VTTF_restricted},~\ref{thm:CLT_r_m_restricted},~\ref{thm:CLT_r_curve_restricted} and~\ref{thm:CLT_factorial_vector_restricted} are estimated by replacing $\ba_{\EM}$ and $\PM$ with $\widehat{\ba}_{\EM}(L)$ and $\PMh(L)$. The mean occupation time $\pi_{x\mid\rm M}$ is estimated by its empirical counterpart $\widehat\pi_{x\mid\rm M}(L)=N_x(L)/L$ in \eqref{eq:Sigma_PM_restricted}. The row indices required by a target are fixed in advance. If $N_x(L)=0$ for a required row, the corresponding plug-in calculation is not available in that sample and this must be reported. Under $\pi_{x\mid\rm M}>0$, this event occurs only finitely often almost surely. Deleting a required row changes the target and is not part of the asymptotic construction.

\subsection{Implementation procedure}
\label{subsec:implementation_procedure}
The preceding formulas give the following finite-dimensional computational procedure, from observed trajectories to restricted reliability estimates, uncertainty quantification and horizon diagnostics.
\begin{center}
\fbox{%
\begin{minipage}{0.92\textwidth}
\small
\textbf{Algorithm 1. Finite-horizon reliability procedure.}

\noindent\textbf{Input.} Independent trajectories $\{Z^{(\ell)}_m:0\le m\le M,\ 1\le \ell\le L\}$, a state partition $E=U\cup D$, a finite horizon $M$, and a confidence level $1-\gamma$.

\begin{enumerate}
\item \textbf{Couple and restrict the trajectories.} Compute the backward recurrence time $U^{(\ell)}_m$ and form $X^{(\ell)}_m=(Z^{(\ell)}_m,U^{(\ell)}_m)$. Fix the finite-horizon state set and the transition rows required by the target before estimation. Record any required row with zero empirical occupation count. Report for every required row the total exposure $N_x(L)$, the number $D_x(L)$ of trajectories contributing at least one exposure, and the estimated mean exposure $N_x(L)/L$.

\item \textbf{Estimate the finite restricted chain.} From the counts $N^{(0)}_x(L)$, $N_x(L)$ and $N_{xy}(L)$, compute $\widehat\ba_{\EM}(L)$ and $\widehat{\bf P}_{\EM\EM}(L)$ as in \eqref{eq:MLE_restricted}. Form the blocks $\widehat{\bf Q}_{\rm M}=(\widehat{\bf P}_{\EM\EM})_{\mathbf U_{\rm M}\mathbf U_{\rm M}}$ and $\widehat{\bf P}_{\mathbf U_{\rm M}\mathbf D_{\rm M}}$.

\item \textbf{Evaluate finite-horizon indicators.} If $\rho(\widehat{\bf Q}_{\rm M})<1$, compute $\widehat{\bf N}_{\rm M}=(I_{\mathbf U_{\rm M}}-\widehat{\bf Q}_{\rm M})^{-1}$ and evaluate the age-restricted moments. Evaluate calendar-truncated moments from the finite sum of powers or directly from the observed truncated failure times. Compute the DTIHT sequence $\widehat r_M=(\widehat r(1;M),\ldots,\widehat r(M;M))^\top$.

\item \textbf{Compute plug-in covariance matrices.} Replace $\ba_{\EM}$, $\PM$ and $\pi_{x\mid \rm M}$ by $\widehat\ba_{\EM}(L)$, $\widehat{\bf P}_{\EM\EM}(L)$ and $N_x(L)/L$ in the covariance formulas. Build the derivative vectors or matrices for the required targets, including $d^{(k)}_M$, $d^V_M$ and $d^r_{m,M}$, and assemble the corresponding plug-in covariance matrices.

\item \textbf{Report statistical uncertainty.} Use the diagonal entries of the plug-in covariance matrices for standard errors and pointwise confidence intervals. For the whole DTIHT sequence, use the joint covariance matrix to produce simultaneous Gaussian confidence envelopes and Wald tests for linear summaries. For moderate $L$ or sparse rows, compare the Gaussian envelope with a trajectory-bootstrap envelope and report the critical values used by both procedures.

\item \textbf{Report restriction diagnostics and, when appropriate, select an analysis horizon.} Compute maximum backward-recurrence times, near-boundary-set indicators, boundary-exit summaries and, when trajectories are available up to a common maximal horizon, paired nested-horizon comparisons. If an analysis horizon is selected from a prespecified grid, use the target-specific rule of Subsection~\ref{subsec:data_driven_horizon} and record the exposure, target-scale and boundary-diagnostic thresholds. Independent sample splitting gives fixed-horizon inference after selection; alternatively, a trajectory bootstrap that repeats the complete selection step approximates the sampling distribution of the selected estimator. Interpret these quantities as diagnostics for finite-horizon restriction effects, not as estimates of the unidentified infinite tail.
\end{enumerate}

\noindent\textbf{Output.} Restricted finite-horizon reliability indicators, plug-in covariance matrices, confidence intervals or simultaneous confidence envelopes, optional Wald summaries, and boundary diagnostics documenting the sensitivity of the reported targets to the observation horizon.
\end{minipage}}
\end{center}

\section{Diagnostics for restriction effects}
\label{S:diagnostics}

The diagnostics below are not estimators of the full unbounded-horizon MTTF\@. They record interaction with the backward-recurrence-time boundary and with exits from the finite-horizon coupled state space. They are applied after the finite-horizon target and its sampling uncertainty have been reported.

\subsection{Non-identifiability of the unbounded-horizon MTTF}

Fixed-horizon observations cannot identify the unobserved tail of the semi-Markov kernel. If two kernels agree on all transitions that can affect trajectories observed up to time $M$, but differ in the later tail of an up-state sojourn distribution, then the induced laws of the observed sample may be identical for every $L$, while the full unbounded-horizon MTTF differs. The next proposition records this point by a two-state construction. It shows that the full MTTF is not a nonparametric estimand in the unrestricted fixed-horizon experiment.

\medskip
\begin{proposition}
\label{prop:finite_horizon_nonidentifiability}
Fix $M\geq1$ and let $p\in(0,1)$. There exist two discrete-time semi-Markov kernels, with the same initial law and the same finite-horizon law of the observed coupled process $((Z_m,U_m),0\leq m\leq M)$, whose full mean times to first failure are different. Consequently, the full unbounded-horizon MTTF is not identifiable from $L$ independent trajectories observed on $0,1,\ldots,M$, for any $L$, unless additional restrictions are imposed on the unobserved tail of the sojourn distribution.
\end{proposition}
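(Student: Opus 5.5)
We prove the statement by an explicit two-state construction. Take $E=\{1,2\}$ with $\mathrm U=\{1\}$, $\mathrm D=\{2\}$, and let both kernels start deterministically at $(1,0)$, so $\ba$ is the same. The only transitions are $1\to2$ and, say, $2\to1$ after one step (the down-state behaviour is irrelevant for the first failure time). The two kernels differ only in the sojourn law of state $1$:
\begin{equation*}
q^{(1)}_{12}(k)=(1-p)^{k-1}p,\qquad k\geq1,
\end{equation*}
which is geometric, and
\begin{equation*}
q^{(2)}_{12}(k)=(1-p)^{k-1}p\ \ (1\leq k\leq M),\qquad q^{(2)}_{12}(M+1+j)=(1-p)^{M}p'(1-p')^{j},\ \ j\geq0,
\end{equation*}
with some $p'\in(0,1)$, $p'\neq p$. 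Both are valid kernels. They share $q_{12}(k)$ for $k\leq M$ and hence $\overline H_1(k)$ for $k\leq M$. If more separation is convenient, $p'$ can be chosen small, so that the tail mass is moved arbitrarily far.

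The key step is equality of the finite-horizon laws. A trajectory on $0,\ldots,M$ starting at $(1,0)$ depends on the kernel only through the transition probabilities $P(x,y)$ for rows $x=(i,u)$ with $u\leq M-1$. By the formula $p_{i,u;j}=q_{ij}(u+1)/\overline H_i(u)$ and $p_{i,u;i}=\overline H_i(u+1)/\overline H_i(u)$, these involve only $q_{ij}(k)$ with $k\leq M$ and $\overline H_i(k)$ with $k\leq M$. For state $1$ these quantities agree by construction. For state $2$ the kernels are identical. The Markov property of $(Z,U)$ then gives identical laws of $((Z_m,U_m))_{0\leq m\leq M}$, and hence identical laws of the i.i.d. $L$-samples for every $L$.

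It remains to compute the full means:
\begin{equation*}
\EE^{(1)}T_{\mathrm D}=\sum_{k}k\,q^{(1)}_{12}(k)=1/p.
\end{equation*}
Under the second kernel, $T_{\mathrm D}$ agrees with the first on $\{T_{\mathrm D}\leq M\}$. On the event of probability $(1-p)^M$ we have $T_{\mathrm D}=M+$ a geometric$(p')$ variable, whereas under the first kernel (memorylessness) we have $T_{\mathrm D}=M+$ a geometric$(p)$ variable. Thus
\begin{equation*}
\EE^{(2)}T_{\mathrm D}-\EE^{(1)}T_{\mathrm D}=(1-p)^M(1/p'-1/p)\neq0,
\end{equation*}
since $(1-p)^M>0$. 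This yields the non-identifiability. If one wants the second MTTF infinite instead, a heavy tail $q^{(2)}_{12}(M+j)\propto j^{-2}$ works as well.

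The only delicate point is the careful check that the boundary row $(1,M-1)$ is the same under both kernels. Its continuation probability $\overline H_1(M)/\overline H_1(M-1)$ involves $\overline H_1(M)$, which equals $(1-p)^M$ in both cases. Its jump probability uses $q_{12}(M)$, which also coincides. Rows $(1,u)$ with $u\geq M$ are never departed from before time $M$, so they do not enter the observed law.
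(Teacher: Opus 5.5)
Your proof is correct and follows essentially the same route as the paper. Both use a two-state kernel pair that agrees on $q_{ij}(k)$ for $k\leq M$, and hence on every coupled row of age at most $M-1$, but differs in the up-state sojourn tail beyond $M$; the observed laws therefore coincide while the full MTTFs differ. The differences are minor: the paper uses a two-point sojourn law on $\{1,K_a\}$ with distinct $K_1,K_2>M$, whereas you modify a geometric law after $M$, and your rowwise check of the coupled transition probabilities is a slightly more explicit version of the paper's induction over entries into the up state.
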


\begin{proof}
Let $E=\{u,d\}$, take $\mathrm U=\{u\}$ and $\mathrm D=\{d\}$, and start from state $u$ with backward recurrence time zero. Choose two distinct integers $K_1,K_2>M$. For $a=1,2$, define a semi-Markov kernel $q^{(a)}$ by
\begin{equation*}
q^{(a)}_{u d}(1)=p,
\qquad
q^{(a)}_{u d}(K_a)=1-p,
\qquad
q^{(a)}_{d u}(1)=1,
\end{equation*}
and set all other entries equal to zero. Each kernel satisfies the normalization condition and the convention excluding self-jumps. Consider an entry into the up state at a time $s\leq M$. Under both kernels, the next transition is a failure at time $s+1$, with probability $p$, or an up-state sojourn of length $K_a$, with probability $1-p$. Since $K_a>M$, the latter alternative is observed on $0,\ldots,M$ only as remaining in the up state, with backward recurrence time increasing by one at each step; its exact duration is not reached within the observation window. The down-state sojourn is deterministic of length one in both models. Induction over successive entries into $u$ before time $M$ therefore gives the same law of $((Z_m,U_m),0\leq m\leq M)$ for $q^{(1)}$ and $q^{(2)}$, and hence the same product law for any number $L$ of independent observed trajectories.

The full first-failure time from the initial up state satisfies
\begin{equation*}
\mathbb{P}_{q^{(a)}}(T_{\mathrm D}=1)=p,
\qquad
\mathbb{P}_{q^{(a)}}(T_{\mathrm D}=K_a)=1-p.
\end{equation*}
Thus
\begin{equation*}
\mathbb E_{q^{(a)}}(T_{\mathrm D})=p+(1-p)K_a,
\end{equation*}
which differs for $a=1$ and $a=2$. Two statistically indistinguishable fixed-horizon experiments therefore lead to different full MTTFs.
\end{proof}

Proposition~\ref{prop:finite_horizon_nonidentifiability} explains why the unbounded-horizon MTTF cannot be recovered nonparametrically from fixed-horizon observations alone. It does not, however, prevent meaningful comparisons between two finite horizons. If a longer horizon $K>M$ is observed, or if a finite continuation of the model up to $K$ is specified, then both $\mathrm{MTTF}_{\rm M}$ and $\mathrm{MTTF}_{\rm K}$ are finite-dimensional reliability indicators. Their difference measures the additional expected time accounted for by extending the finite-horizon restriction from $M$ to $K$. This comparison is therefore a horizon-sensitivity diagnostic, not an estimator of the unidentified infinite tail. The following deterministic identity gives the decomposition.

\medskip
\begin{proposition}
\label{prop:nested_horizon_decomposition}
Let $K>M$ and construct the restricted up-state sets ${\bf U}_{\rm M}\subset {\bf U}_{\rm K}$ from the same semi-Markov kernel. Assume that the same initial law is used at both horizons and that it is supported on ${\bf U}_{\rm M}$; write this law as $\ba_{{\bf U}_{\rm M}}$. Assume also that the two restricted up-state matrices have spectral radii smaller than one. Put
\begin{equation*}
{\bf W}_{K\setminus M}={\bf U}_{\rm K}\setminus{\bf U}_{\rm M},
\end{equation*}
and write the up-to-up block of the $K$-horizon coupled transition matrix as
\begin{equation*}
{\bf Q}_{\rm K}=
\begin{pmatrix}
{\bf Q}_{\rm M} & {\bf R}_{{\bf U}_{\rm M},{\bf W}_{K\setminus M}}\\
{\bf R}_{{\bf W}_{K\setminus M},{\bf U}_{\rm M}} & {\bf R}_{{\bf W}_{K\setminus M},{\bf W}_{K\setminus M}}
\end{pmatrix}.
\end{equation*}
The symbols ${\bf R}_{A,B}$ denote the corresponding submatrices between the indicated parts of ${\bf U}_{\rm K}$. Let $\bm t_{K\setminus M}$ denote the coordinates, on ${\bf W}_{K\setminus M}$, of the $K$-restricted mean continuation-time vector
\begin{equation*}
({\bf I}_{{\bf U}_{\rm K}}-{\bf Q}_{\rm K})^{-1}\one_{{\bf U}_{\rm K}}.
\end{equation*}
Then
\begin{equation}
\label{eq:finite_MK_decomp}
\mathrm{MTTF}_{\rm K}
=
\mathrm{MTTF}_{\rm M}
+
\ba_{{\bf U}_{\rm M}}({\bf I}_{{\bf U}_{\rm M}}-
{\bf Q}_{\rm M})^{-1}
{\bf R}_{{\bf U}_{\rm M},{\bf W}_{K\setminus M}}
\bm t_{K\setminus M}.
\end{equation}
In particular, $\mathrm{MTTF}_{\rm M}\leq \mathrm{MTTF}_{\rm K}$.
\end{proposition}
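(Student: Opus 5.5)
The identity \eqref{eq:finite_MK_decomp} is a block-inverse computation for the resolvent of ${\bf Q}_{\rm K}$. I would prove it as follows.

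Write $\bm t_{\rm K}=({\bf I}_{{\bf U}_{\rm K}}-{\bf Q}_{\rm K})^{-1}\one_{{\bf U}_{\rm K}}$. This is the unique solution of $({\bf I}-{\bf Q}_{\rm K})\bm t_{\rm K}=\one$, which exists because $\rho({\bf Q}_{\rm K})<1$. Split $\bm t_{\rm K}=(\bm t_{{\rm K},{\bf U}_{\rm M}},\bm t_{K\setminus M})$ according to the partition ${\bf U}_{\rm K}={\bf U}_{\rm M}\cup{\bf W}_{K\setminus M}$. First I must check that the top-left block of ${\bf Q}_{\rm K}$ really is ${\bf Q}_{\rm M}$. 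For $x,y\in{\bf U}_{\rm M}$ the entry $P(x,y)$ is a transition probability of the full coupled chain. It does not depend on the horizon, since restriction only deletes rows and columns. The first block row of the linear system then reads
\begin{equation*}
({\bf I}_{{\bf U}_{\rm M}}-{\bf Q}_{\rm M})\bm t_{{\rm K},{\bf U}_{\rm M}}-{\bf R}_{{\bf U}_{\rm M},{\bf W}_{K\setminus M}}\bm t_{K\setminus M}=\one_{{\bf U}_{\rm M}}.
\end{equation*}
Since $\rho({\bf Q}_{\rm M})<1$, the matrix ${\bf N}_{\rm M}$ exists, and we obtain
\begin{equation*}
\bm t_{{\rm K},{\bf U}_{\rm M}}={\bf N}_{\rm M}\one_{{\bf U}_{\rm M}}+{\bf N}_{\rm M}{\bf R}_{{\bf U}_{\rm M},{\bf W}_{K\setminus M}}\bm t_{K\setminus M}.
\end{equation*}

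Next, the initial law is supported on ${\bf U}_{\rm M}$. So $\mathrm{MTTF}_{\rm K}=\ba_{{\bf U}_{\rm K}}\bm t_{\rm K}=\ba_{{\bf U}_{\rm M}}\bm t_{{\rm K},{\bf U}_{\rm M}}$, with the coordinates of $\ba$ on ${\bf W}_{K\setminus M}$ equal to zero. By \eqref{eq:MTTF_M_population} at both horizons, multiplying the previous display on the left by $\ba_{{\bf U}_{\rm M}}$ gives \eqref{eq:finite_MK_decomp} directly. This step uses only the first block row, so no Schur complement needs to be inverted explicitly.

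For the inequality, I need the correction term to be nonnegative. Because $\rho<1$, the Neumann series gives ${\bf N}_{\rm M}=\sum_n{\bf Q}_{\rm M}^n\geq0$ entrywise, and likewise $\bm t_{\rm K}=\sum_n{\bf Q}_{\rm K}^n\one\geq0$, so $\bm t_{K\setminus M}\geq 0$. The matrices ${\bf R}$ and $\ba$ are nonnegative, being submatrices of transition probabilities. Hence the added term is $\geq0$.

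The only point needing care is the compatibility of the blocks, namely that the ${\bf U}_{\rm M}\times{\bf U}_{\rm M}$ block of ${\bf Q}_{\rm K}$ coincides with ${\bf Q}_{\rm M}$. The relevant case is a boundary row $(i,M-1)$. At horizon $M$ its mass $p^{\mathrm{out}}_{x;M}$ goes to the exit state. At horizon $K$ that same mass appears in ${\bf R}_{{\bf U}_{\rm M},{\bf W}_{K\setminus M}}$, as the entry to $(i,M)$. This is exactly what the correction term accounts for, so I expect no real obstacle beyond stating this identification explicitly.
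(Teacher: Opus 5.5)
Your proposal is correct and follows essentially the same route as the paper: you identify the ${\bf U}_{\rm M}\times{\bf U}_{\rm M}$ block of ${\bf Q}_{\rm K}$ with ${\bf Q}_{\rm M}$, use only the first block row of $({\bf I}-{\bf Q}_{\rm K})\bm t_{\rm K}=\one$, solve it with ${\bf N}_{\rm M}$ (the paper instead subtracts the horizon-$M$ equation, which is equivalent), premultiply by $\ba_{{\bf U}_{\rm M}}$, and deduce monotonicity from entrywise nonnegativity. Your Neumann-series justification of that nonnegativity is slightly more explicit than the paper's; the paper also mentions the pathwise coupling $T_{\mathrm D,\rm M}\leq T_{\mathrm D,\rm K}$ as an alternative argument for monotonicity.
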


\begin{proof}
Because the two restrictions are constructed from the same coupled chain, the top-left block of the $K$-horizon up-to-up matrix, indexed by ${\bf U}_{\rm M}$, is ${\bf Q}_{\rm M}$. Let $\bm t_{\rm M}=({\bf I}_{{\bf U}_{\rm M}}-{\bf Q}_{\rm M})^{-1}\one_{{\bf U}_{\rm M}}$ be the vector of $M$-restricted mean times to failure or truncation, for starts in ${\bf U}_{\rm M}$. Let $\bm t_{\rm K}=({\bf I}_{{\bf U}_{\rm K}}-{\bf Q}_{\rm K})^{-1}\one_{{\bf U}_{\rm K}}$ be the analogous vector at horizon $K$, and partition it as $\bm t_{\rm K}=(\bm t_{{\rm K},M}^{\top},\bm t_{K\setminus M}^{\top})^{\top}$ according to ${\bf U}_{\rm M}\cup {\bf W}_{K\setminus M}$. The first block of the linear system $\bm t_{\rm K}=\one_{{\bf U}_{\rm K}}+{\bf Q}_{\rm K}\bm t_{\rm K}$ is
\begin{equation*}
\bm t_{{\rm K},M}
=\one_{{\bf U}_{\rm M}}+{\bf Q}_{\rm M}\bm t_{{\rm K},M}
+{\bf R}_{{\bf U}_{\rm M},{\bf W}_{K\setminus M}}\bm t_{K\setminus M}.
\end{equation*}
Subtracting the equation $\bm t_{\rm M}=\one_{{\bf U}_{\rm M}}+{\bf Q}_{\rm M}\bm t_{\rm M}$ gives
\begin{equation*}
\bm t_{{\rm K},M}-\bm t_{\rm M}
=({\bf I}_{{\bf U}_{\rm M}}-{\bf Q}_{\rm M})^{-1}
{\bf R}_{{\bf U}_{\rm M},{\bf W}_{K\setminus M}}\bm t_{K\setminus M}.
\end{equation*}
Premultiplication by the common initial law $\ba_{{\bf U}_{\rm M}}$ gives \eqref{eq:finite_MK_decomp}. All entries of the matrices and continuation-time vector are non-negative, which gives the monotonicity. The same monotonicity also follows pathwise from $T_{\mathrm D,\rm M}\leq T_{\mathrm D,\rm K}$ under the natural coupling of the two restricted chains.
\end{proof}

Proposition~\ref{prop:nested_horizon_decomposition} is finite-dimensional and does not invoke an infinite-state operator. It is a diagnostic identity, not a tail estimator. The continuation term is available only when the larger horizon $K$ is observed or when an additional finite model beyond $M$ is supplied.

\subsection{A target-specific, data-driven analysis horizon}
\label{subsec:data_driven_horizon}

The observation horizon and the analysis horizon must first be distinguished. If a study records every trajectory only up to a fixed time $M$, then $M$ is a design feature and no data-driven procedure can recover information beyond it. A choice among horizons is possible only when the trajectories are available up to a common maximal horizon $M^\star$ and one considers a prespecified grid
\begin{equation*}
{\cal G}=\{M_1<\cdots<M_K=M^\star\}.
\end{equation*}
The choice is necessarily target-specific. For example, an administrative calendar horizon defines the target $T_{\mathrm D}\wedge M$ and should normally be chosen from the scientific follow-up question, whereas selection by stability is relevant when an age-restricted quantity is used as a finite approximation whose sensitivity to the represented age range must be controlled.

Let $\theta_M$ denote a scalar finite-horizon target and let ${\cal R}_M$ be its required transition rows. Before comparing target estimates, remove from consideration every horizon for which a required row has zero exposure, the estimated resolvent is not spectrally stable, or prespecified adequacy thresholds are not met. For a resolvent target, define the admissible set by
\begin{align*}
{\cal G}_{\rm adm}
=\bigl\{M_j\in{\cal G}:\;&
\min_{x\in{\cal R}_{M_j}}N_x^{(M_j)}(L)\geq n_{\min},\\
&\min_{x\in{\cal R}_{M_j}}D_x^{(M_j)}(L)\geq d_{\min},\\
&\rho(\widehat{\bf Q}_{M_j})\leq1-\delta_\rho
\bigr\}.
\end{align*}
Here $n_{\min}$, $d_{\min}$ and $\delta_\rho$ are fixed before inspecting the target estimates. These thresholds are diagnostics rather than universal constants; they should reflect the sample size and the reliability calculation under consideration.

Suppose first that ${\cal G}_{\rm adm}$ is nonempty and that $\theta_M$ is non-decreasing in $M$, as is the age-restricted mean in Proposition~\ref{prop:nested_horizon_decomposition}. Let $M_{\max}$ be the largest element of ${\cal G}_{\rm adm}$ and put
\begin{equation*}
\widehat\Delta_j
=
\widehat\theta_{M_{\max}}-\widehat\theta_{M_j}.
\end{equation*}
Because both estimates use the same trajectories, the standard error $\widehat s_j$ of $\widehat\Delta_j$ must be computed from their paired influence functions or from a trajectory bootstrap, not by treating the estimates as independent. Let $c_{1-\gamma}$ be a joint one-sided critical value for the finite vector of differences. Given a prespecified tolerance $\varepsilon_\theta>0$ on the scale of the target and a diagnostic boundary tolerance $\varepsilon_{\rm tail}>0$, define
\begin{equation*}
\widehat M
=
\min\left\{
M_j\in{\cal G}_{\rm adm}:
\widehat\Delta_j+c_{1-\gamma}\widehat s_j\leq\varepsilon_\theta,
\quad
q^{\rm diag}_{\rm tail}(L;M_j)\leq\varepsilon_{\rm tail}
\right\}.
\end{equation*}
Here $q^{\rm diag}_{\rm tail}$ is the boundary-exit diagnostic defined below; the constituent rowwise bounds use adjusted error probabilities when several boundary rows are considered. Since the diagnostic also contains plug-in transition quantities, it is used as a screening quantity rather than as an exact confidence bound. For a non-monotone target, the first condition is replaced by a simultaneous upper bound for
\begin{equation*}
\max_{M_k\in{\cal G}_{\rm adm}:M_k>M_j}
|\theta_{M_k}-\theta_{M_j}|.
\end{equation*}
If the defining set is empty, the data do not support declaring any candidate horizon adequate at the chosen tolerances; the largest admissible horizon and all diagnostics should then be reported without an adequacy claim.

This rule balances two effects that move in opposite directions. Increasing $M$ reduces age-restriction sensitivity but enlarges the coupled state space and can create sparsely exposed rows. It does not turn the finite comparison into an estimate of the unidentified infinite tail. Moreover, fixed-$M$ confidence intervals applied after selecting $\widehat M$ from the same data do not automatically retain their nominal coverage. The grid and tolerances should therefore be prespecified. Sample splitting, using one set of trajectories to select $\widehat M$ and the other for fixed-horizon inference, avoids using the same observations twice. A trajectory bootstrap that repeats the complete selection and estimation procedure in each resample approximates the sampling distribution of the selected estimator. Its validity near horizon-selection boundaries requires a separate analysis and is not established here.

\subsection{Maximum backward recurrence times and near-boundary diagnostics}

The maximum backward recurrence times and the near-boundary indicators measure contact with the boundary $M-1$ of the represented age coordinate. Repeated visits near this boundary indicate possible sensitivity of age-restricted moment calculations to the horizon.

For each state $i\in E$, define
\begin{equation*}
\widehat u_i^{\max}(L)=
\max\{u:\exists (\ell,m)\textrm{ such that }(Z_m^{(\ell)},U_m^{(\ell)})=(i,u)\},
\end{equation*}
with the convention $\widehat u_i^{\max}(L)=-1$ if state $i$ is never visited. The deterministic evolution of the backward recurrence time implies
\begin{equation*}
N_{(i,u)}(L)>0
\quad\Longrightarrow\quad
N_{(i,v)}(L)>0,
\qquad v=0,\ldots,u-1.
\end{equation*}
Thus, if $\widehat u_i^{\max}(L)$ is far below $M-1$, the data do not approach the truncation boundary in block $i$.

For a width $d\in\{1,\ldots,M\}$, define
\begin{equation*}
\partial^{(d)}\EM(i)=\{(i,u)\in\EM:u\geq M-d\}.
\end{equation*}
For trajectory $\ell$, let
\begin{equation*}
B_{i,d}^{(\ell)}=
\mathds{1}\{\exists m\leq M:(Z_m^{(\ell)},U_m^{(\ell)})\in\partial^{(d)}\EM(i)\}.
\end{equation*}
The variables $B_{i,d}^{(1)},\ldots,B_{i,d}^{(L)}$ are independent Bernoulli variables with parameter $\delta_{i,d}(\rm M)$. Let $b_{i,d}=\sum_{\ell=1}^L B_{i,d}^{(\ell)}$. The one-sided Clopper--Pearson upper confidence bound \citep{ClopperPearson1934} at error probability $\gamma$ is
\begin{equation*}
\delta_{i,d}^{\rm ub}(L;\gamma)
=
\begin{cases}
\mathrm{Beta}^{-1}\left(1-\gamma;b_{i,d}+1,L-b_{i,d}\right), & b_{i,d}<L,\\[1mm]
1, & b_{i,d}=L.
\end{cases}
\end{equation*}
If no visit to this near-boundary set is observed, this becomes
\begin{equation*}
\delta_{i,d}^{\rm ub}(L;\gamma)=1-\gamma^{1/L}.
\end{equation*}

\subsection{Boundary exit probability}

After contact with the boundary, the relevant quantity is the probability of leaving the represented age space. If the coupled process is in $x=(i,M-1)$, then a transition preserving the physical state moves the age from $M-1$ to $M$, outside $\EM$. This is not a failure event; it is exit into an unrepresented continuation region. For up states, a non-negligible boundary-exit probability indicates that age-restricted moment calculations may depend on sojourn-time continuation beyond the observation horizon.

For $x\in\EM$, define the one-step probability of leaving the finite-horizon coupled state space by
\begin{equation*}
p^{\rm out}_{x;\rm M}=1-\sum_{y\in\EM}P(x,y).
\end{equation*}
By the structure of the coupled chain, this probability can be nonzero only at boundary states $x=(i,M-1)$. The empirical exit count is
\begin{equation*}
N_x^{\rm out}(L)=N_x(L)-\sum_{y\in\EM}N_{xy}(L).
\end{equation*}
For a boundary row $x=(i,M-1)$ with $N_x(L)>0$, visits can occur only at time $M-1$. Conditional on $N_x(L)$, the exit count is therefore binomial with parameter $p^{\rm out}_{x;\rm M}$. Writing $n=N_x(L)$ and $e=N_x^{\rm out}(L)$, the one-sided Clopper--Pearson upper confidence bound is
\begin{equation*}
p^{\rm out,ub}_{x;\rm M}(L;\gamma_x)
=
\begin{cases}
\mathrm{Beta}^{-1}\left(1-\gamma_x;e+1,n-e\right), & e<n,\\[1mm]
1, & e=n.
\end{cases}
\end{equation*}
If $e=0$ and $n>0$, it reduces to
\begin{equation*}
p^{\rm out,ub}_{x;\rm M}(L;\gamma_x)=1-\gamma_x^{1/n}.
\end{equation*}
When bounds are reported for several boundary rows, simultaneous rowwise coverage at least $1-\gamma$ is obtained by choosing the error probabilities so that $\sum_x\gamma_x\leq\gamma$.

For MTTF, a useful diagnostic summary is the finite-chain probability of entering unobserved up-state backward recurrence times before entering a down state. With a vector $\bm p^{\rm out,ub}_{{\bf U}_{\rm M}}$ collecting the boundary exit probability bounds on ${\bf U}_{\rm M}$ and zeros elsewhere, define
\begin{equation*}
q_{\rm tail}^{\rm diag}(L;\rm M)
=(\widehat\ba_{\EM}(L))_{{\bf U}_{\rm M}}
\left\{{\bf I}_{{\bf U}_{\rm M}}-(\PMh(L))_{{\bf U}_{\rm M}{\bf U}_{\rm M}}\right\}^{-1}
\bm p^{\rm out,ub}_{{\bf U}_{\rm M}}.
\end{equation*}
The quantity $q_{\rm tail}^{\rm diag}(L;\rm M)$ is defined when the displayed inverse exists and summarizes the potential contribution of the boundary-exit probabilities under the fitted finite-horizon chain. It should be read only as a boundary-exit diagnostic: it does not quantify the duration of the unobserved continuation. Because it combines separate row-wise upper bounds with plug-in estimates of $\widehat\ba_{\EM}(L)$ and $\PMh(L)$, it is neither an exact simultaneous confidence bound nor necessarily bounded by one.


\section{Numerical results}
\label{S:application}

The numerical section has two parts. The simulated alternating-renewal model has known transition probabilities and is used to check the finite-horizon formulas, sampling approximations and diagnostics. The EBMT example uses reconstructed post-transplant trajectories from the \texttt{mstate} package and reports calendar-truncated summaries and hitting intensities for a complete-case subset.

\subsection{Simulated semi-Markov system}

The numerical calculations were carried out in Python using NumPy, SciPy and Numba. The single simulated sample contains $L=5000$ trajectories observed up to time $50$ and uses seed $314159$. The plug-in simultaneous DTIHT critical value is based on $250000$ Gaussian draws with seed $271828$. The Monte Carlo settings for the remaining experiments are stated with the corresponding tables.

We first consider a simulated semi-Markov system where $E=\{1,2\}$, $\mathrm U=\{1\}$ and $\mathrm D=\{2\}$. The process alternates between the two states. Thus, from a coupled state $(i,u)$, the next state is either $(i,u+1)$ or $(3-i,0)$. The initial condition is $(Z_0,U_0)=(1,0)$ and
\begin{equation*}
p_{1,u;1}=\begin{cases}
0.98, & 0\leq u\leq9,\\
0.93, & 10\leq u\leq24,\\
0.75, & u\geq25,
\end{cases}
\qquad
p_{2,u;2}=\begin{cases}
0.90, & 0\leq u\leq4,\\
0.60, & u\geq5.
\end{cases}
\end{equation*}

\subsubsection{Restricted MTTF and moment characteristics}

In this alternating model there is only one operational state before first failure. Hence $T_{\mathrm D,\rm M}=T_{\mathrm D}^{[M]}=T_{\mathrm D}\wedge M$, and the unbounded-horizon time to failure is the length of the initial up-state sojourn. Hence the theoretical unbounded-horizon MTTF is available from the known transition probabilities. With the usual empty-product convention,
\begin{equation*}
\mathrm{MTTF}=\sum_{n\geq 0}\mathbb{P}(T_{\mathrm D}>n)
=\sum_{n\geq 0}\prod_{u=0}^{n-1}p_{1,u;1}
=17.989,
\end{equation*}
up to the displayed precision. This value is used only as a benchmark for the finite-horizon restriction effect. Before introducing simulated samples, we first examine how the theoretical restricted mean changes with the horizon. Table~\ref{tab:horizon_sensitivity_example} reports $\mathrm{MTTF}_{\rm M}$, the tail probability $\mathbb{P}(T_{\mathrm D}>M)$, and the truncation gap $\Delta_{\rm M}=\mathrm{MTTF}-\mathrm{MTTF}_{\rm M}$. The column $\mathrm{se}_{5000}$ is the first-order model-based standard error of $\widehat{\mathrm{MTTF}}_{\rm M}$ for $L=5000$, computed from Theorem~\ref{thm:CLT_MkTTF_restricted}. The ratio $\Delta_{\rm M}/\mathrm{se}_{5000}$ compares the finite-horizon truncation gap with the sampling error expected at this illustrative sample size.

\begin{table}[htbp]
\centering
\caption{Horizon sensitivity of $\mathrm{MTTF}_{\rm M}$.}
\label{tab:horizon_sensitivity_example}
\begin{tabular}{c|ccccc}
\hline
$M$ & $\mathrm{MTTF}_{\rm M}$ & $\mathbb{P}(T_{\mathrm D}>M)$ & $\Delta_{\rm M}$ & $\mathrm{se}_{5000}$ & $\Delta_{\rm M}/\mathrm{se}_{5000}$\\
\hline
10 & 9.146 & 0.81707 & 8.843 & 0.030 & 292.7\\
15 & 12.698 & 0.56843 & 5.291 & 0.052 & 101.7\\
20 & 15.170 & 0.39545 & 2.820 & 0.073 & 38.4\\
25 & 16.889 & 0.27511 & 1.100 & 0.092 & 12.0\\
30 & 17.728 & 0.06528 & 0.261 & 0.100 & 2.6\\
40 & 17.974 & 0.00368 & 0.015 & 0.093 & 0.16\\
50 & 17.988 & 0.00021 & 0.001 & 0.084 & 0.01\\
\hline
\end{tabular}
\end{table}

These are theoretical calculations for the specified semi-Markov model, not estimates from the sample. At $M=20$, the truncation gap is about $38$ times the sampling standard error for $L=5000$. A confidence interval centred on $\widehat{\mathrm{MTTF}}_{20}$ can therefore have good coverage for $\mathrm{MTTF}_{20}$ while being far from the full MTTF\@. At $M=50$, the corresponding gap is negligible on the same sampling scale.

For the two horizons retained in the single-sample analysis, Table~\ref{tab:theoretical_restricted_shape} reports the theoretical restricted moment characteristics obtained from \eqref{eq:MTTF_M_population} and from the first four factorial moments in \eqref{eq:MkTTF_M_population}. These quantities are computed from the known transition probabilities and therefore serve as benchmarks for the plug-in estimates. The table gives the restricted mean, the restricted standard deviation $s_{\rm M}$, the restricted coefficient of skewness $\gamma_{1,\rm M}$, the restricted coefficient of kurtosis $\gamma_{2,\rm M}$, and the tail probability $\mathbb{P}(T_{\mathrm D}>M)$, reported as a reference value for the finite-horizon restriction. The contrast between the two horizons is visible beyond the mean: at $M=20$ the distribution is strongly left-skewed because many paths reach the boundary before failure, whereas at $M=50$ the restricted distribution is very close to the time-to-failure distribution for the present parameters.

\begin{table}[htbp]
\centering
\caption{Theoretical restricted moment characteristics of $T_{\mathrm D,\rm M}$ for the simulated semi-Markov system.}
\label{tab:theoretical_restricted_shape}
\begin{tabular}{c|ccccc}
\hline
$M$ & $\mathrm{MTTF}_{\rm M}$ & $s_{\rm M}$ & $\gamma_{1,\rm M}$ & $\gamma_{2,\rm M}$ & $\mathbb{P}(T_{\mathrm D}>M)$\\
\hline
20 & 15.170 & 5.643 & -0.967 & 2.821 & 0.39545\\
50 & 17.988 & 8.825 & 0.065 & 2.336 & 0.00021\\
\hline
\end{tabular}
\end{table}

We now consider one simulated data set of $L=5000$ independent trajectories generated up to time $50$. The results for $M=20$ are obtained by restricting these same trajectories to the time interval $0,\ldots,20$, whereas the results for $M=50$ use the complete simulated paths. This construction compares two finite-horizon analyses on the basis of the same underlying sample. We first report plug-in estimates from this single data set; repeated-sampling assessments are considered afterwards.

The first four restricted factorial moments provide, through Theorem~\ref{thm:CLT_factorial_vector_restricted}, a joint asymptotic distribution for the corresponding moment characteristics. Table~\ref{tab:joint_shape_example} compares the theoretical targets from Table~\ref{tab:theoretical_restricted_shape} with the single-sample plug-in estimates of the restricted mean, standard deviation, skewness and kurtosis. The last column gives the corresponding first-order standard errors.

\begin{table}[htbp]
\centering
\caption{Joint moment characteristics obtained from the first four restricted factorial moments. The reported standard errors are computed from the plug-in first-order covariance formula.}
\label{tab:joint_shape_example}
\begin{tabular}{cc|ccc}
\hline
$M$ & characteristic & target & estimate & s.e.\\
\hline
20 & mean & 15.170 & 15.205 & 0.073\\
20 & sd & 5.643 & 5.610 & 0.048\\
20 & skewness & -0.967 & -0.978 & 0.024\\
20 & kurtosis & 2.821 & 2.843 & 0.060\\
\hline
50 & mean & 17.988 & 18.132 & 0.084\\
50 & sd & 8.825 & 8.779 & 0.053\\
50 & skewness & 0.065 & 0.064 & 0.021\\
50 & kurtosis & 2.336 & 2.354 & 0.045\\
\hline
\end{tabular}
\end{table}

To separate the single-sample illustration from repeated-sampling performance, we ran a finite-sample coverage assessment for the matrix plug-in estimator of the restricted MTTF\@. For each pair $(M,L)$, $10^4$ independent Monte Carlo replications were generated from the same alternating-renewal model. Each replication estimates the age-dependent transition probabilities from all observed up-state exposures and then evaluates the matrix plug-in functional in Theorem~\ref{thm:CLT_MkTTF_restricted}. The random-number stream uses seed $20260619+1000M+L$. The Gaussian $95\%$ confidence interval is
\begin{equation*}
\widehat{\mathrm{MTTF}}_{\rm M}(L)
\pm 1.96\,\widehat{\mathrm{se}}\{\widehat{\mathrm{MTTF}}_{\rm M}(L)\}.
\end{equation*}
The target is the theoretical restricted mean $\mathrm{MTTF}_{\rm M}$, not the full unbounded-horizon MTTF\@. Coverage denotes the empirical proportion of confidence intervals that contain this target over the Monte Carlo replications. Table~\ref{tab:mttf_coverage_assessment} reports the average estimate, the Monte Carlo standard deviation (MC sd), the average plug-in standard error (mean s.e.) and the corresponding empirical coverage.

\begin{table}[htbp]
\centering
\caption{Monte Carlo coverage of Gaussian $95\%$ confidence intervals for $\mathrm{MTTF}_{\rm M}$. The coverage column gives the empirical proportion of intervals containing the theoretical restricted mean.}
\label{tab:mttf_coverage_assessment}
\begin{tabular}{cc|ccccc}
\hline
$M$ & $L$ & target & mean estimate & MC sd & mean s.e. & coverage\\
\hline
20 & 100  & 15.170 & 15.174 & 0.520 & 0.516 & 0.942\\
20 & 500  & 15.170 & 15.166 & 0.233 & 0.232 & 0.949\\
20 & 1000 & 15.170 & 15.170 & 0.164 & 0.164 & 0.949\\
\hline
50 & 100  & 17.988 & 17.992 & 0.585 & 0.588 & 0.950\\
50 & 500  & 17.988 & 17.997 & 0.264 & 0.264 & 0.950\\
50 & 1000 & 17.988 & 17.987 & 0.188 & 0.187 & 0.948\\
\hline
\end{tabular}
\end{table}

Table~\ref{tab:mttf_coverage_assessment} shows that the plug-in standard errors reproduce the Monte Carlo standard deviations and that the Gaussian intervals are close to the $95\%$ target even for moderate $L$. With $10^4$ Monte Carlo replications, the Monte Carlo standard error of a coverage estimate near $0.95$ is about $0.002$. Thus, in this parameter setting, the first-order asymptotic approximation provides an accurate scalar inference procedure for the restricted MTTF. The coverage assessment is nevertheless conditional on the finite-horizon target: for $M=20$ the interval covers $\mathrm{MTTF}_{20}$, while Table~\ref{tab:horizon_sensitivity_example} shows that this restricted mean should not be interpreted as the full MTTF.

We finally assess the finite-order approximation for the restricted MTTF\@. The reference experiment uses $M=50$, $L=30$ trajectories per replication and $10^5$ independent replications of the whole experiment. The first-order and second-order approximations are those described in Subsection~\ref{subsec:higher_order_restricted}. More precisely, the first-order interval is obtained from the first differential in \eqref{eq:DMTTF_restricted} together with Theorem~\ref{thm:CLT_MkTTF_restricted} with $k=1$, whereas the second-order interval adds the $L^{-1}$ term in \eqref{eq:MTTF_first_second_approx_restricted} by simulating $10^6$ copies of the limiting Gaussian matrix. For this assessment, the Monte Carlo skewness of $\widehat{\mathrm{MTTF}}_{50}(30)$ is $0.157$, so that a symmetric normal approximation is not expected to reproduce both endpoints exactly. In Table~\ref{tab:mttf_second_order_mc}, $q^{\rm MC}_{\alpha}$ denotes the empirical
$\alpha$-quantile of $\widehat{\mathrm{MTTF}}_{50}(30)$ computed from the
$10^5$ Monte Carlo replications. For each approximation method,
$q^{\rm app}_{\alpha}$ denotes the corresponding $\alpha$-quantile obtained from the
first-order or second-order approximating distribution. The quantities
\begin{equation*}
\Delta_L=q^{\rm app}_{0.025}-q^{\rm MC}_{0.025},
\qquad
\Delta_U=q^{\rm app}_{0.975}-q^{\rm MC}_{0.975}.
\end{equation*}
are the signed errors of the lower and upper endpoints of the central $95\%$ interval.
Values of $\Delta_L$ and $\Delta_U$ closer to zero indicate a better approximation
of the Monte Carlo reference interval.

\begin{table}[htbp]
\centering
\caption{Central $95\%$ sampling-distribution interval for $\widehat{\mathrm{MTTF}}_{50}(30)$ and first-order and curvature-adjusted Gaussian approximations. The Monte Carlo reference uses $10^5$ independent experiments; the curvature-adjusted approximation uses $10^6$ Gaussian draws.}
\label{tab:mttf_second_order_mc}
\begin{tabular}{c|ccccc}
\hline
method & $2.5\%$ & $50\%$ & $97.5\%$ & $\Delta_L$ & $\Delta_U$\\
\hline
Monte Carlo & 15.981 & 17.990 & 20.209 & -- & --\\
first order & 15.874 & 17.988 & 20.103 & -0.108 & -0.106\\
curvature adjusted & 16.013 & 17.924 & 20.313 & 0.032 & 0.105\\
\hline
\end{tabular}
\end{table}

The quadratic approximation is a curvature-adjusted Gaussian approximation, not a complete order-$L^{-1}$ bias correction and not an Edgeworth expansion. The ratio-bias contribution in Remark~\ref{rem:second_order_bias_MLE} is omitted. In Table~\ref{tab:mttf_second_order_mc}, the total absolute endpoint discrepancy decreases from $0.214$ to $0.137$. The absolute lower-endpoint discrepancy decreases from $0.108$ to $0.032$, whereas the upper endpoint discrepancy changes little in absolute value and the approximating median moves farther from the Monte Carlo median. The refinement is therefore not uniformly superior over all distributional summaries.

The restriction diagnostics of Section~\ref{S:diagnostics} were also computed from the single sample of $5000$ trajectories. The near-boundary strip contains the last five up-state ages and the one-sided bounds use error probability $0.05$. Table~\ref{tab:simulation_diagnostics} reports pointwise bounds for the displayed up-state row.

\begin{table}[htbp]
\centering
\caption{Restriction diagnostics for the simulated sample with $L=5000$, strip width $d=5$ and one-sided confidence level $95\%$.}
\label{tab:simulation_diagnostics}
\setlength{\tabcolsep}{3.5pt}
\begin{tabular}{c|rrrrrrr}
\hline
$M$ & $\widehat u_1^{\max}$ & strip hits & $\delta_{1,5}^{\rm ub}$ & $N_{(1,M-1)}$ & $N_{(1,M-1)}^{\rm out}$ & $p_{(1,M-1)}^{\rm out,ub}$ & $q_{\rm tail}^{\rm diag}$\\
\hline
20 & 19 & 2925 & 0.5965 & 2124 & 1981 & 0.9414 & 0.3992\\
50 & 49 & 5    & 0.0021 & 1    & 1    & 1.0000 & 0.0002\\
\hline
\end{tabular}
\end{table}

At $M=20$, visits to the boundary strip and exits from the represented up-state ages are frequent. At $M=50$, the boundary row is reached only once. Its rowwise upper bound is therefore uninformative, but the fitted probability of reaching an unobserved up age before failure is approximately $2\times10^{-4}$. The accessibility and row-exit diagnostics must consequently be interpreted together.

\subsubsection{Discrete-time intensity of the hitting time}

The MTTF analysis above concerns the time to the first entrance into the failed state. We now use the same alternating semi-Markov model in its repairable interpretation, where returns from the failed state to the operational state allow several up-to-down transitions along a trajectory. In this setting, the DTIHT describes the time sequence of failure-occurrence probabilities. We first consider inference for selected time points of the DTIHT. In Table~\ref{tab:intensity_envelopes_example}, $r(m)$ denotes the theoretical model value, whereas $\widehat r(m;20)$ and $\widehat r(m;50)$ are the plug-in estimates obtained from the same simulated data set under the two finite horizons. The column ``pointwise h.w.'' is
\begin{equation*}
z_{0.975}\left\{\widehat\Gamma^r_{\rm M}(L)_{mm}/L\right\}^{1/2},
\end{equation*}
whereas the column ``simultaneous h.w.'' is
\begin{equation*}
c_{0.95}\left\{\widehat\Gamma^r_{\rm M}(L)_{mm}/L\right\}^{1/2},
\end{equation*}
with $c_{0.95}$ the $0.95$ quantile of the maximum of the standardized Gaussian vector in Theorem~\ref{thm:CLT_r_curve_restricted}. Thus the first quantity is the marginal $95\%$ half-width for one fixed ordinate and the second is the simultaneous $95\%$ half-width for the entire sequence $m=1,\ldots,20$. Both half-width columns are computed from the $M=20$ plug-in covariance matrix and refer to $\widehat r(m;20)$; the $M=50$ estimates are included only for the nested-horizon comparison.

\begin{table}[htbp]
\centering
\caption{Selected DTIHT ordinates and Gaussian $95\%$ half-widths for one simulated data set. The half-widths use the $M=20$ plug-in covariance matrix.}
\label{tab:intensity_envelopes_example}
\begin{tabular}{c|ccccc}
\hline
$m$ & $r(m)$ & $\widehat r(m;20)$ & $\widehat r(m;50)$ & pointwise h.w. & simultaneous h.w.\\
\hline
1 & 0.0200 & 0.0184 & 0.0182 & 0.0032 & 0.0049\\
5 & 0.0187 & 0.0172 & 0.0162 & 0.0033 & 0.0050\\
10 & 0.0180 & 0.0133 & 0.0154 & 0.0029 & 0.0045\\
15 & 0.0466 & 0.0476 & 0.0448 & 0.0056 & 0.0086\\
20 & 0.0388 & 0.0374 & 0.0376 & 0.0046 & 0.0070\\
\hline
\end{tabular}
\end{table}

We then assessed the repeated-sampling behaviour of the DTIHT estimator. Table~\ref{tab:intensity_mc_assessment} is based on $1000$ independent Monte Carlo replications with $M=20$ and $L=5000$. For each replication, the finite-dimensional DTIHT estimator and the plug-in pointwise standard errors were recomputed from the estimated restricted transition matrix. The columns report the theoretical value $r(m)$, the average estimate, the Monte Carlo standard deviation, the average plug-in standard error and the pointwise coverage. Here pointwise coverage is the empirical proportion of confidence intervals containing $r(m)$ at the fixed time point $m$. The Monte Carlo standard error of a coverage entry near $0.95$ is about $0.007$.

\begin{table}[htbp]
\centering
\caption{Monte Carlo assessment for selected DTIHT ordinates, with $M=20$ and $L=5000$.}
\label{tab:intensity_mc_assessment}
\begin{tabular}{c|ccccc}
\hline
$m$ & $r(m)$ & mean estimate & MC sd & mean s.e. & pointwise coverage\\
\hline
1  & 0.0200 & 0.0199 & 0.0017 & 0.0017 & 0.952\\
5  & 0.0187 & 0.0187 & 0.0018 & 0.0017 & 0.949\\
10 & 0.0180 & 0.0181 & 0.0017 & 0.0017 & 0.960\\
15 & 0.0466 & 0.0465 & 0.0029 & 0.0028 & 0.945\\
20 & 0.0388 & 0.0389 & 0.0024 & 0.0024 & 0.948\\
\hline
\end{tabular}
\end{table}

The agreement between the empirical Monte Carlo standard deviations and the average plug-in standard errors supports the covariance calculation used for the finite-dimensional DTIHT sequence. The coverage column is pointwise, because each time point is assessed separately. When the whole sequence is reported at once, the dependence between ordinates must be taken into account.

Figure~\ref{fig:dtiht_envelopes_M20} is the graphical counterpart of Table~\ref{tab:intensity_envelopes_example}: the table reports selected ordinates, whereas the figure displays the full finite-dimensional DTIHT sequence for $m=1,\ldots,20$. It includes pointwise confidence intervals and simultaneous Gaussian confidence envelopes; the latter are wider because they use the joint Gaussian critical value for the whole sequence.

\begin{figure}[!htbp]
\centering
\includegraphics[width=0.88\textwidth]{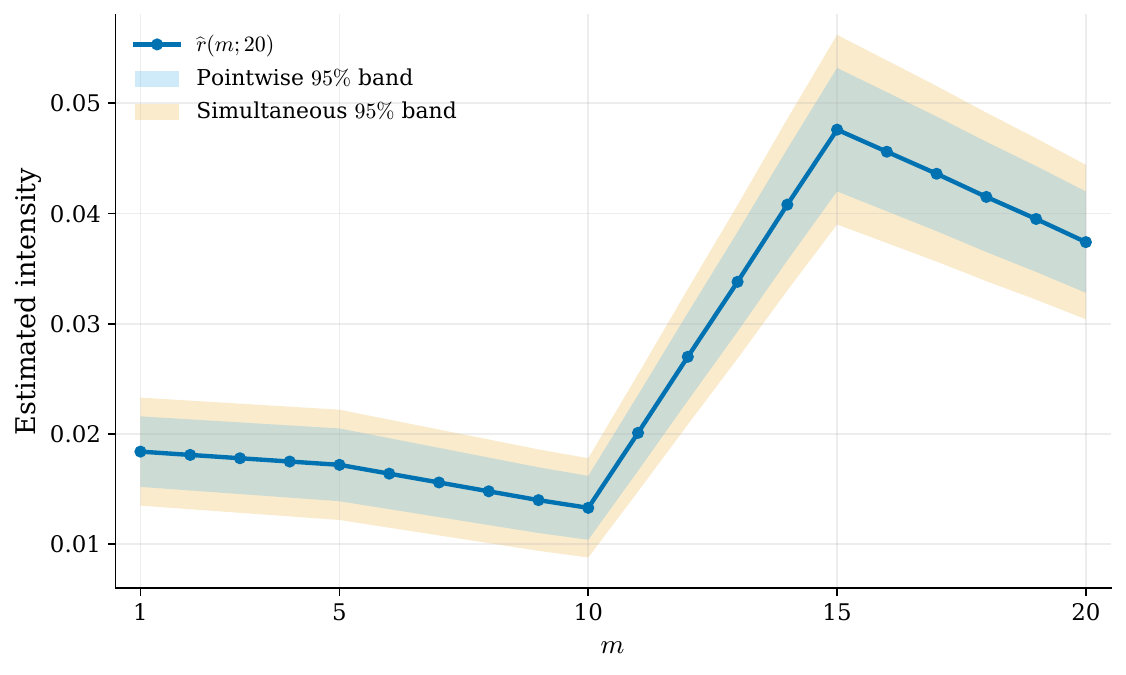}
\caption{Discrete-time intensity of the hitting time for the simulated data set with $M=20$. The line gives the estimated intensity, and the shaded regions give pointwise and simultaneous Gaussian $95\%$ confidence envelopes.}
\label{fig:dtiht_envelopes_M20}
\end{figure}

\subsection{EBMT post-transplant multi-state trajectories}
\label{subsec:ebmt}

We consider an empirical multiple-trajectory example based on the \texttt{ebmt3} data set from the European Group for Blood and Marrow Transplantation (EBMT), available in the \texttt{mstate} package; see \citet{Putter2007} and \citet{deWreedeFioccoPutter2011}. The data contain $2204$ independent post-transplant patient histories and record the time to platelet recovery and the time to relapse or death. The usual multi-state representation has three states,
\begin{equation*}
E=\{1,2,3\}=\{\mathrm{Tx},\mathrm{PR},\mathrm{RD}\}.
\end{equation*}
where $\mathrm{Tx}$ denotes the post-transplant state before platelet recovery, $\mathrm{PR}$ denotes platelet recovery, and $\mathrm{RD}$ denotes relapse or death. The possible transitions are
\begin{equation*}
\mathrm{Tx}\longrightarrow \mathrm{PR},\qquad
\mathrm{Tx}\longrightarrow \mathrm{RD},\qquad
\mathrm{PR}\longrightarrow \mathrm{RD}.
\end{equation*}
Thus every patient starts in state $\mathrm{Tx}$ at time zero and contributes one trajectory. For the reliability interpretation used here, we set
\begin{equation*}
U=\{\mathrm{Tx},\mathrm{PR}\},\qquad D=\{\mathrm{RD}\}.
\end{equation*}
The absorbing state $\mathrm{RD}$ is the analogue of a failed state; consequently, the discrete-time intensity of the hitting time coincides with the finite-horizon probability mass function of the first entrance into $D$. The terminology is reliability terminology applied to a multi-state event-history data set, and no clinical conclusion is drawn from the numerical values.

The data are reported in days. Because the present paper is formulated in discrete time, a transition observed at continuous time $t$ is assigned to the discrete day $\lceil t\rceil$. For a fixed horizon $M$, measured in days, define
\begin{equation*}
T_{\mathrm{PR},\ell}=\begin{cases}
\lceil \texttt{prtime}_{\ell}\rceil, & \text{if }\texttt{prstat}_{\ell}=1,\\
+\infty, & \text{if }\texttt{prstat}_{\ell}=0,
\end{cases}
\qquad
T_{\mathrm{RD},\ell}=\begin{cases}
\lceil \texttt{rfstime}_{\ell}\rceil, & \text{if }\texttt{rfsstat}_{\ell}=1,\\
+\infty, & \text{if }\texttt{rfsstat}_{\ell}=0.
\end{cases}
\end{equation*}
A patient censored before $M$ and before reaching $\mathrm{RD}$ does not provide a complete trajectory on $\{0,\ldots,M\}$ and is omitted from the complete-case analysis. To make nested-horizon comparisons meaningful, we fix a maximum analysis horizon $M^\star=1825$ days, retain the $L^\star=1170$ patients with relapse/death observed before $M^\star$ or follow-up extending beyond $M^\star$, and then evaluate all horizons $M\leq M^\star$ on this same retained set.

For a retained trajectory, the reconstructed discrete path is
\begin{equation*}
Z_m^{(\ell)}=
\begin{cases}
\mathrm{Tx}, & 0\leq m<T_{\mathrm{PR},\ell}\wedge T_{\mathrm{RD},\ell},\\
\mathrm{PR}, & T_{\mathrm{PR},\ell}\leq m<T_{\mathrm{RD},\ell}\text{ and }T_{\mathrm{PR},\ell}<T_{\mathrm{RD},\ell},\\
\mathrm{RD}, & T_{\mathrm{RD},\ell}\leq m,
\end{cases}
\qquad 0\leq m\leq M^\star.
\end{equation*}
The backward recurrence time is then computed by resetting it to zero at each observed transition. Same-day events, if present, are assigned according to the convention that $\mathrm{RD}$ has priority over $\mathrm{PR}$, because the target is the first entrance into the failed set. The coupled observations are
\begin{equation*}
X_m^{(\ell)}=(Z_m^{(\ell)},U_m^{(\ell)}),\qquad \ell=1,\ldots,L^\star,
\quad m=0,\ldots,M^\star.
\end{equation*}
The analysis below is conditional on inclusion in the $1170$-patient complete-case subset. It is therefore descriptive of this selected population and is not an estimate for the original cohort of $2204$ patients without a censoring adjustment. Since the true semi-Markov law is unknown, uncertainty is assessed with $B=10000$ nonparametric trajectory-bootstrap resamples of the retained histories. The bootstrap standard errors and percentile intervals quantify sampling variability within this complete-case analysis; they are not a substitute for independent-censoring theory.

The inclusion event depends jointly on follow-up and the observed event history. Unless censoring is independent of the multi-state process under conditions strong enough to justify complete-case conditioning, the retained patients can differ systematically from those censored before $M^{\star}$, and the resulting selection bias cannot be removed by increasing $L^{\star}$. An extension to the original cohort could be based on inverse probability of censoring weights. If $C_{\ell}$ is the censoring time and $G(m+1\mid\mathcal H_{\ell m})$ is the conditional probability of remaining uncensored through the transition $m\to m+1$, weighted counts would take the form
\begin{align*}
N_x^{w}(L)
&=
\sum_{\ell=1}^{L}\sum_{m=0}^{M-1}
\frac{\mathds{1}_{\{C_{\ell}\geq m+1\}}}{\widehat G(m+1\mid\mathcal H_{\ell m})}
\mathds{1}_{\{X_m^{(\ell)}=x\}},\\
N_{xy}^{w}(L)
&=
\sum_{\ell=1}^{L}\sum_{m=0}^{M-1}
\frac{\mathds{1}_{\{C_{\ell}\geq m+1\}}}{\widehat G(m+1\mid\mathcal H_{\ell m})}
\mathds{1}_{\{X_m^{(\ell)}=x,X_{m+1}^{(\ell)}=y\}}.
\end{align*}
Under conditional independent censoring and positivity, their ratio gives a natural weighted transition estimator. Its asymptotic covariance must include both the weighted transition score and estimation of the censoring distribution; this requires a separate estimating-equation or martingale analysis and is not claimed here. The complete-case results below are therefore intentionally labelled as conditional descriptive summaries.

Table~\ref{tab:ebmt_transition_summary} first reports the transitions of the full \texttt{ebmt3} data in the three-state multi-state representation. These entries are not the finite-horizon coupled-chain estimates themselves; they are included to show that the data set contains many independent trajectories with observed intermediate-state transitions. In Tables~\ref{tab:ebmt_transition_summary} and~\ref{tab:ebmt_fixed_transition_summary}, the column ``Time in state'' denotes the total accumulated time, in years, spent in the current state by the contributing individuals before an outgoing transition, censoring or the analysis horizon. The column ``Individuals'' gives the number of distinct patients contributing to the corresponding current-state row; for instance, only patients who reach $\mathrm{PR}$ contribute to the $\mathrm{PR}$ row.

\begin{table}[htbp]
\centering
\caption{Transition summary for the EBMT \texttt{ebmt3} trajectories in the three-state representation.}
\label{tab:ebmt_transition_summary}
\begin{tabular}{lrrrrr}
\hline
Current state & Remaining/censored & To PR & To RD & Time in state & Individuals \\
\hline
Tx    & 577  & 1169 & 458 & 2439.43 & 2204 \\
PR    & 786  & 0    & 383 & 3173.03 & 1169 \\
\hline
Total & 1363 & 1169 & 841 & 5612.46 & 2204 \\
\hline
\end{tabular}
\end{table}

For the $M^\star=1825$ day complete-case subset, the corresponding finite-window transition summary is shown in Table~\ref{tab:ebmt_fixed_transition_summary}. The $1170$ retained paths contain $581$ observed transitions from $\mathrm{Tx}$ to $\mathrm{PR}$, $451$ direct transitions from $\mathrm{Tx}$ to $\mathrm{RD}$, and $373$ transitions from $\mathrm{PR}$ to $\mathrm{RD}$ before the five-year horizon. At the horizon, $138$ retained patients remain in $\mathrm{Tx}$ and $208$ remain in $\mathrm{PR}$ without entering $\mathrm{RD}$.

\begin{table}[htbp]
\centering
\caption{Transition summary for the EBMT complete-case subset retained up to $M^\star=1825$ days.}
\label{tab:ebmt_fixed_transition_summary}
\begin{tabular}{lrrrrr}
\hline
Current state & Remaining at $M^\star$ & To PR & To RD & Time in state & Individuals \\
\hline
Tx & 138 & 581 & 451 & 1078.20 & 1170 \\
PR & 208 & 0   & 373 & 1366.97 & 581 \\
\hline
Total & 346 & 581 & 824 & 2445.17 & 1170 \\
\hline
\end{tabular}
\end{table}

The EBMT summaries use calendar truncation. Put
\begin{equation*}
T_{\mathrm D}^{[M]}=T_{\mathrm{RD}}\wedge M,
\qquad
\mathrm{MTTF}^{[M]}=\EE(T_{\mathrm D}^{[M]}).
\end{equation*}
This variable must be distinguished from the age-restricted failure-or-exit time $T_{\mathrm D,\rm M}$ of Definition~\ref{def:restricted_TTF}. In the EBMT model, the transition from $\mathrm{Tx}$ to $\mathrm{PR}$ resets the backward recurrence time, so the two variables need not coincide. The direct empirical estimator is
\begin{equation*}
\widehat{\mathrm{MTTF}}_{\rm emp}^{[M]}
=
\frac{1}{L^\star}\sum_{\ell=1}^{L^\star}
\min(T_{\mathrm{RD},\ell},M).
\end{equation*}
Table~\ref{tab:ebmt_mttf_bootstrap} reports this calendar-truncated mean, its trajectory-bootstrap standard error and percentile interval, the observed probability of entering $\mathrm{RD}$ by $M$, and the calendar-horizon survival mass $\widehat{\mathbb{P}}\,(T_{\mathrm D}>M)$.

\begin{table}[htbp]
\centering
\caption{EBMT calendar-truncated mean time to relapse/death on the $M^\star=1825$ day complete-case subset. Bootstrap standard errors and percentile intervals are based on $10000$ trajectory resamples.}
\label{tab:ebmt_mttf_bootstrap}
\begin{tabular}{r r r r r r r}
\hline
$M$ & $L^\star$ & $\widehat{\mathrm{MTTF}}_{\rm emp}^{[M]}$ & boot. s.e. & 95\% boot. CI & $\widehat{\mathbb{P}}\,(T_{\mathrm D}\leq M)$ & $\widehat{\mathbb{P}}\,(T_{\mathrm D}>M)$ \\
\hline
365  & 1170 & 251.582 & 3.743  & [244.198, 258.851] & 0.516 & 0.484 \\
730  & 1170 & 404.356 & 8.320  & [387.875, 420.659] & 0.618 & 0.382 \\
1095 & 1170 & 534.591 & 12.882 & [509.234, 559.917] & 0.662 & 0.338 \\
1460 & 1170 & 652.711 & 17.382 & [618.534, 686.823] & 0.688 & 0.312 \\
1825 & 1170 & 763.331 & 21.838 & [720.507, 806.251] & 0.704 & 0.296 \\
\hline
\end{tabular}
\end{table}

The calendar-truncated mean increases with $M$ because the same retained trajectories are used at all horizons. At $M=1825$, $29.6\%$ of the retained paths have not entered $\mathrm{RD}$. Thus the five-year value is a calendar-truncated mean and should not be interpreted as an extrapolated unbounded mean time to relapse/death.

Figure~\ref{fig:ebmt_mttf_boundary} displays the same horizon comparison. The calendar-truncated mean rises with $M$, while the calendar-horizon survival mass $\widehat{\mathbb{P}}\,(T_{\mathrm D}>M)$ decreases but remains non-negligible over the five-year window.

\begin{figure}[!htbp]
\centering
\includegraphics[width=0.88\textwidth]{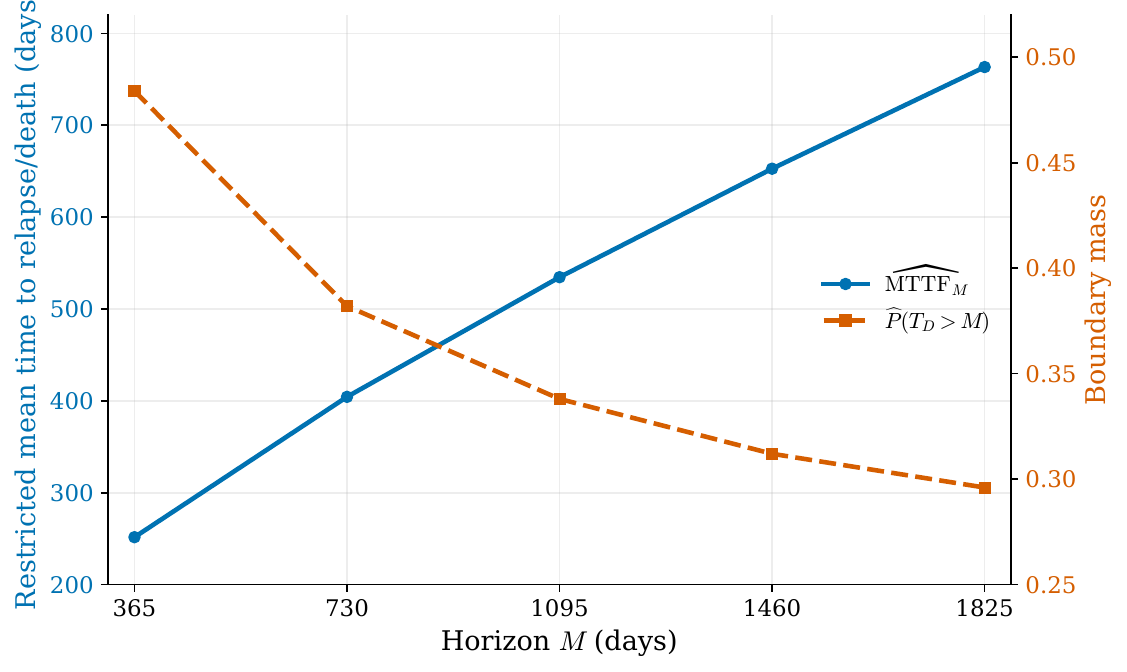}
\caption{EBMT calendar-horizon summaries on the retained $M^\star=1825$ day sample. The solid curve gives the calendar-truncated mean time to relapse/death and the dashed curve gives $\widehat{\mathbb{P}}\,(T_{\mathrm D}>M)$.}
\label{fig:ebmt_mttf_boundary}
\end{figure}

Table~\ref{tab:ebmt_moment_characteristics} gives the moment characteristics of $T_{\mathrm D}^{[M]}$ for three representative horizons. The skewness changes from negative at one year to positive at five years as the point mass at the calendar horizon decreases. The standard errors again come from the $10000$ trajectory-bootstrap resamples.

\begin{table}[htbp]
\centering
\caption{EBMT moment characteristics of the calendar-truncated time $T_{\mathrm D}^{[M]}$ on the retained $M^\star=1825$ day sample.}
\label{tab:ebmt_moment_characteristics}
\begin{tabular}{r l r r r}
\hline
$M$ & characteristic & estimate & boot. s.e. & 95\% boot. CI \\
\hline
365 & mean     & 251.582 & 3.743 & [244.198, 258.851] \\
365 & s.d.     & 129.090 & 1.475 & [126.058, 131.886] \\
365 & skewness & -0.526 & 0.054 & [-0.634, -0.423] \\
365 & kurtosis & 1.610 & 0.063 & [1.501, 1.749] \\
\hline
730 & mean     & 404.356 & 8.320 & [387.875, 420.659] \\
730 & s.d.     & 286.166 & 2.131 & [281.834, 290.202] \\
730 & skewness & 0.060 & 0.052 & [-0.042, 0.162] \\
730 & kurtosis & 1.258 & 0.018 & [1.229, 1.301] \\
\hline
1825 & mean     & 763.331 & 21.838 & [720.507, 806.251] \\
1825 & s.d.     & 751.068 & 7.732 & [734.755, 765.257] \\
1825 & skewness & 0.558 & 0.058 & [0.446, 0.674] \\
1825 & kurtosis & 1.490 & 0.073 & [1.365, 1.651] \\
\hline
\end{tabular}
\end{table}

As a complementary result, we also present a DTIHT summary, since the finite-dimensional intensity sequence is one of the inferential targets developed in Subsections~\ref{subsec:DTIHT_definition} and~\ref{subsec:DTIHT_curve_joint}. For the discrete-time intensity of the hitting time, the absorbing nature of $\mathrm{RD}$ gives
\begin{equation*}
r(m;M)=\mathbb{P}(Z_{m-1}\in\{\mathrm{Tx},\mathrm{PR}\},Z_m=\mathrm{RD})=\mathbb{P}(T_{\mathrm D}=m),
\qquad 1\leq m\leq M.
\end{equation*}
Daily estimates are sparse, so Table~\ref{tab:ebmt_dtiht_bins} reports the finite-dimensional linear summaries
\begin{equation*}
R(a,b;M^\star)=\sum_{m=a+1}^{b} r(m;M^\star),
\end{equation*}
for selected day intervals. Their covariance can be obtained by summing the corresponding entries of the joint covariance matrix in Theorem~\ref{thm:CLT_r_curve_restricted}. The last row is not a DTIHT ordinate or a linear summary of the DTIHT sequence; it is the complementary calendar-horizon survival mass $\mathbb{P}(T_{\mathrm D}>M^\star)$. The reported intervals are trajectory-bootstrap percentile intervals.

\begin{table}[htbp]
\centering
\caption{Aggregated EBMT discrete-time hitting intensities over selected day intervals and the complementary survival mass, on the retained $M^\star=1825$ day sample.}
\label{tab:ebmt_dtiht_bins}
\begin{tabular}{l r r r r}
\hline
Interval/status & count & estimate & boot. s.e. & 95\% boot. CI \\
\hline
$(0,100]$      & 235 & 0.2009 & 0.0117 & [0.1786, 0.2239] \\
$(100,180]$    & 171 & 0.1462 & 0.0103 & [0.1265, 0.1658] \\
$(180,365]$    & 198 & 0.1692 & 0.0110 & [0.1479, 0.1906] \\
$(365,730]$    & 119 & 0.1017 & 0.0088 & [0.0846, 0.1197] \\
$(730,1095]$   & 51  & 0.0436 & 0.0060 & [0.0325, 0.0556] \\
$(1095,1460]$  & 31  & 0.0265 & 0.0047 & [0.0179, 0.0359] \\
$(1460,1825]$  & 19  & 0.0162 & 0.0037 & [0.0094, 0.0239] \\
No RD by 1825 & 346 & 0.2957 & 0.0133 & [0.2692, 0.3214] \\
\hline
\end{tabular}
\end{table}

The EBMT example uses independent trajectories with an observed intermediate state and an absorbing failed state. It reports calendar-truncated failure-time summaries, aggregated DTIHT probabilities and the complementary calendar-horizon survival mass for the selected complete-case population. Right censoring remains the principal limitation of this illustration; censoring-weighted semi-Markov reliability inference is outside the present scope.

The reporting order is therefore explicit: state the finite-horizon target, report its sampling uncertainty, and then give the corresponding calendar-horizon or backward-recurrence diagnostics according to the target under consideration.


\section{Conclusion}
\label{S:conclusion}

This paper has developed a finite-horizon nonparametric inference framework for reliability indicators of discrete-time semi-Markov systems observed through multiple independent trajectories. The central point is that fixed-horizon data identify a finite restriction of the coupled Markov chain, and therefore identify restricted reliability indicators rather than unrestricted tail-dependent quantities. For these finite-horizon targets, the empirical restricted initial law and the empirical restricted transition matrix have a joint Gaussian limit, and plug-in estimators inherit Gaussian limits through finite-dimensional matrix differentiation.

The restricted reliability indicators considered here are finite-dimensional polynomial and resolvent functionals of the restricted transition matrix. Their differentials give standard errors for restricted moment characteristics, simultaneous confidence envelopes for finite-dimensional discrete-time intensity sequences, Wald-type inference for linear summaries and joint inference for moment-based shape characteristics. The constructive non-identifiability result explains why the unrestricted MTTF is not a nonparametric target under fixed-horizon observation alone, while the finite-order refinements, nested-horizon decomposition, boundary diagnostics and horizon-sensitivity calculations help distinguish sampling uncertainty from finite-horizon effects.

Restricted reliability indicators should be reported as finite-horizon characteristics whenever the diagnostics indicate possible interaction with the unobserved tail. Full unbounded-horizon reliability inference requires additional assumptions on the continuation of the sojourn-time mechanism or on the infinite-state operator. The EBMT example illustrates the distinction between age-restricted and calendar-truncated quantities in an empirical multiple-trajectory setting.

The present analysis is formulated in discrete time. In continuous time, augmentation by the backward recurrence time yields a Markov process on $E\times\mathbb R_+$, but the transition mechanism is described through sojourn-time distributions or hazards rather than finite multinomial rows. Fixed-horizon inference would therefore require a continuous-time likelihood and a martingale or operator-based covariance analysis; these arguments are not developed here. Other reliability indicators, including mean time to repair, mean up time, mean down time, mean time between failures and failure rates, would first require definitions of their finite-horizon targets and the corresponding differentials; see \citet{LimniosOprisan2001} and \citet{Barb08}.

In a hidden semi-Markov model, the physical state is latent and the backward recurrence time, being determined by latent jump times, is not directly observed. The coupled process can still be used as a latent Markov state, but the observed likelihood must be evaluated by filtering or forward--backward recursions. Finite-horizon identifiability then depends on both the latent transition mechanism and the emission distributions, up to the usual label indeterminacy. Consequently, the observed-state row-count estimator and its covariance formula do not carry over directly. Reliability indicators for hidden semi-Markov chains and hidden Markov renewal chains have been studied in relation to the DTIHT \citep{Vots15}; related latent-state formulations are discussed in \citet{PeyrardDeSaporta2026}, with flexible dwell-time models in \citet{PohleAdamBeumer2022} and \citet{RicciottiEtAl2025}. Continuous-time hidden Markov processes would additionally require the filtering methods treated by \citet{Kutoyants2025}.

Resampling and smoothing methods provide a separate inferential extension. The bootstrap, kernel and bootstrapped kernel estimators studied by \citet{VBouz25} could be adapted to restricted reliability indicators, but their analysis would have to preserve the finite-horizon target and account for simultaneous confidence envelopes and horizon diagnostics. Such results are not established in the present paper.

A distinct asymptotic extension concerns an increasing observation horizon. The case with fixed $L$ and $M\to\infty$ is not a direct limit of the present theorem: within-trajectory dependence accumulates, the number of coupled age states grows, and recurrence, mixing and sojourn-tail conditions become essential. A joint regime $M=M_L\to\infty$ and $L\to\infty$ would form a growing-dimensional triangular array. Let ${\cal R}_{M_L}$ denote the rows required by the target at horizon $M_L$. At a minimum, such a regime would require effective exposure conditions such as
\begin{equation*}
\min_{x\in{\cal R}_{M_L}}L\pi_{x\mid M_L}\longrightarrow\infty
\end{equation*}
for the rows required by the target, together with uniform control of the number of such rows, the covariance operators, the derivatives of the reliability functionals and, for resolvent targets, the distance of the relevant spectral radii from one. If the aim is an unbounded-horizon functional, the restriction bias must also vanish at a rate compatible with the sampling error. The fixed-$M$ multiple-trajectory limit, the one-long-trajectory limit and a joint limit therefore require separate asymptotic analyses. A unified theorem is not derived from the present finite-dimensional argument.

\section*{Statements and declarations}

\noindent\textbf{Funding.} The authors acknowledge the support of the French National Research Agency (ANR), under grant ANR-21-CE40--005 (project HSMM-INCA).\par
\vspace{0.5\baselineskip}

\noindent\textbf{Conflict of interest.} The authors declare that they have no conflict of interest.\par
\vspace{0.5\baselineskip}

\noindent\textbf{Data availability.} No new empirical data were collected for this study. The simulated numerical illustration and coverage assessments are generated from the model specified in Section~\ref{S:application}. The EBMT post-transplant data used in the real-data example are publicly available as the \texttt{ebmt3} data set in the \texttt{mstate} package; see \citet{Putter2007} and \citet{deWreedeFioccoPutter2011}.\par
\vspace{0.5\baselineskip}

\noindent\textbf{Code availability.} The code used to generate the numerical tables and figures in Section~\ref{S:application} is available from the corresponding author upon request.\par


\bibliographystyle{abbrvnat}
\bibliography{SMM_MCAP_references}

\end{document}